\numberwithin{equation}{section}
\newcounter{prop}
\newenvironment{prop}[1][]{\refstepcounter{prop}\par\medskip%
   \textit{Proposition~\theprop. #1} \rmfamily}{}
\newcounter{thm}
\newenvironment{thm}[1][]{\refstepcounter{thm}\par\medskip%
   \textit{Theorem~\thethm. #1} \rmfamily}{}
\newcounter{corollary}
\newcounter{lemma}
\newenvironment{lemma}[1][]{\refstepcounter{lemma}\par\medskip%
   \textit{Lemma~\thelemma. #1} \rmfamily}{}
\newcounter{example}
\newenvironment{example}[1][]{\refstepcounter{example}\par\medskip%
   \textit{Example~\theexample. #1} \rmfamily}{}
\newcounter{remark}
\newenvironment{remark}[1][]{\refstepcounter{remark}\par\medskip%
   \textit{Remark~\theremark. #1} \rmfamily}{}
\def \var{\mbox{Var}}
\def \cov{\mbox{Cov}}
\def \asconv {\stackrel{a.s.}{\longrightarrow}}
\def \indist {\stackrel{\mathcal{L}}{\longrightarrow}}
\def \sumn {\displaystyle\sum_{t=1}^n}
\def \var{\mbox{Var}}
\def \E{\mbox{E}}
\def \xx{\mathbf{x}}
\def \yy{\mathbf{y}}
\def \XX{\mathbf{X}}
\def \argmin{\mbox{argmin}}
\def \argmax{\mbox{argmax}}
\begin{document}
\title{Theory and Inference for a Class of Observation-Driven Models with Application to Time Series of Counts}
\author{Richard A. Davis and Heng Liu\footnote{Corresponding author: Department of Statistics, Columbia University, 1255 Amsterdam Avenue, MC4690, New York, NY 10027, USA; Email: hengliu@stat.columbia.edu}\\~\\
Columbia University}

\date{}
\maketitle

\begin{abstract}
This paper studies theory and inference related to a class of time series models that incorporates nonlinear dynamics. It is assumed that the observations follow a one-parameter exponential family of distributions given an accompanying process that evolves as a function of lagged observations.  We employ an iterated random function approach and a special coupling technique to show that, under suitable conditions on the parameter space,  the conditional mean process is a geometric moment contracting Markov chain and that the observation process is absolutely regular with geometrically decaying coefficients. Moreover the asymptotic theory of the maximum likelihood estimates of the parameters is established under some mild assumptions. These models are applied to two examples; the first is the number of transactions per minute of Ericsson stock and the second is related to return times of extreme events of Goldman Sachs Group stock.

\noindent\small{\textbf{Keywords:}~Absolute regularity; Ergodicity; Geometric moment contraction; Iterated random functions; One-parameter exponential family; Time series of counts}
\end{abstract}
\normalsize

\section{Introduction}

With a surge in the range of applications from economics, finance, environmental science, social science and epidemiology, there has been renewed interest in developing models for time series of counts. The majority of these models assume that the observations follow a Poisson distribution conditioned on an accompanying intensity process that drives the dynamics of the models, e.g., \cite{Davis03}, \cite{Fokianos}, \cite{Neumann}, \cite{Sarah} and \cite{WeakDepPoisAR}. According to whether the evolution of the intensity process depends on the observations or solely on an external process, \cite{Cox81} classified the models into observation-driven and parameter-driven. This paper focuses on the theory and inference for a particular class of observation-driven models.

Many of the commonly used models, such as the Poisson integer-valued GARCH (INGARCH), are special cases of our model. For an INGARCH, the observations $\{Y_t\}$ given the intensity process $\{\lambda_t\}$ follow a Poisson distribution and $\lambda_t$ is a linear combination of its lagged values and lagged $Y_t$. The model is capable of capturing positive temporal correlation in the observations and it is relatively easy to fit via maximum likelihood. \cite{Ferland} showed the second moment stationarity through a sequence of approximating processes and \cite{Fokianos} established the consistency and asymptotic normality of the MLE by introducing a perturbed model. However, all the above results rely heavily on the Poisson assumption and the GARCH-like dynamics of $\lambda_t$. Later \cite{Neumann} relaxed the linear assumption to a general contracting evolution rule and proved the absolute regularity for this Poisson count process and \cite{WeakDepPoisAR} showed the existence of moments under similar conditions by utilizing the concept of weak dependence.

In our study the conditional distribution of the observation $Y_t$ given the past is assumed to follow a one-parameter exponential family. The temporal dependence in the model is defined through recursions relating the conditional mean process $X_t$ with its lagged values and lagged observations. Theory from iterated random functions (IRF), see e.g., \cite{Diaconis} and \cite{Weibiao04}, is utilized to establish some key stability properties, such as existence of a stationary and mixing solution. This theory allows us to consider both linear and nonlinear dynamic models as well as inference questions. In particular, the asymptotic normality of the maximum likelihood estimates can be established. The nonlinear dynamic models are also investigated in a simulation study and both linear and nonlinear models are applied to two real datasets.

The organization of the paper is as follows. Section 2 formulates the model and establishes stability properties. The maximum likelihood estimates of the parameters and the relevant asymptotic theory are derived in Section 3. Examples of both linear and nonlinear dynamic models are considered in Section 4. Numerical results, including a simulation study and two data applications are given in Section 5, where the models are applied to the number of transactions per minute of Ericsson stock and to the return times of extreme events of Goldman Sachs Group (GS) stock. Some diagnostic tools for assessing and comparing model performance are also given in Section 5. Appendix A reviews some standard properties of the one-parameter exponential family and the proofs of the key results in Sections 2-4 are deferred to Appendix B.

\section{Model formulation and stability properties}
\subsection{One-parameter exponential family}
A random variable $Y$ is said to follow a distribution of the one-parameter exponential family if its probability density function with respect to some $\sigma$-finite measure $\mu$ is given by
\begin{eqnarray}
p(y|\eta)=\exp\{\eta y-A(\eta)\}h(y), ~~~ y\ge 0, \label{eq:expfamily}
\end{eqnarray}
where $\eta$ is the natural parameter, and $A(\eta)$ and $h(y)$ are known functions. If $B(\eta)=A'(\eta)$, then it is known that $\E Y=B(\eta)$ and $\var(Y)=B'(\eta)$. The derivative of $A(\eta)$ exists generally for the exponential family, see e.g., \cite{TPE}. Since $B'(\eta)=\var(Y)>0$, so $B(\eta)$ is strictly increasing, which establishes a one-to-one association between the values of $\eta$ and $B(\eta)$. Moreover, because we assume that the support of $Y$ is non-negative throughout this paper, so $B(\eta)=\E Y>0$, which implies that $A(\eta)$ is strictly increasing. Other properties of this family of distributions are presented in Appendix A.

Many familiar distributions belong to this family, including Poisson, negative binomial, Bernoulli, exponential, etc. If the shape parameter is fixed, then the gamma distribution is also a member of this family. While we restrict consideration to only the univariate case, extensions to the multi-parameter exponential family is a topic of future research.

\subsection{Model formulation}

Set $\mathcal{F}_0=\sigma\{\eta_1\}$, where $\eta_1$ is a natural parameter of (\ref{eq:expfamily}) and assumed fixed for the moment. Let $Y_1, Y_2, \ldots$ be observations from a model that is defined recursively in the following fashion,
\begin{eqnarray}
Y_t|\mathcal{F}_{t-1}\sim p(y|\eta_t),~~~X_t=g_{\theta}(X_{t-1}, Y_{t-1}), \label{eq:expmodel}
\end{eqnarray}
for all $t\ge 1$, where $p(y|\eta_t)$ is defined in (\ref{eq:expfamily}), $\mathcal{F}_t=\sigma\{\eta_1, Y_1,\ldots, Y_t\}$ and $X_t$ is the conditional mean process, i.e., $X_t=B(\eta_t)=\E(Y_t|\mathcal{F}_{t-1})$. Here $g_{\theta}(x, y)$ is a non-negative bivariate function defined on $[0, \infty)\times [0,\infty)$ when $Y_t$ has a continuous conditional distribution or on $[0,\infty)\times \mathbb{N}_0$, where $\mathbb{N}_0=\{0, 1,\ldots\}$, when $Y_t$ only takes non-negative integers. Throughout, we assume that the function $g_{\theta}$ satisfies a contraction condition, i.e., for any $x,x'\ge 0$, and $y,y'\in [0,\infty)~\mbox{or}~\mathbb{N}_0$, 
\begin{eqnarray}
|g_{\theta}(x,y)-g_{\theta}(x',y')|\le a|x-x'|+b|y-y'|, \label{ContractionFunction}
\end{eqnarray} 
where $a$ and $b$ are non-negative constants with $a+b<1$. Note that (\ref{ContractionFunction}) implies 
\begin{eqnarray}
g_{\theta}(x,y)\le g_{\theta}(0,0)+ax+by, ~~\mbox{for any}~~x,y\ge 0. \label{eq:BoundOfG}
\end{eqnarray}
We point out that model (\ref{eq:expmodel}) with the function $g_{\theta}$ satisfying (\ref{ContractionFunction}) includes the Poisson INGARCH model (see Example \ref{PoissonIngarchExample}) and the exponential autoregressive model (\ref{eq:PoisExpModel}) as special cases under some restrictions on the parameter space. The generalized linear autoregressive moving average model (GLARMA) (see \cite{Davis03}) also belongs to this class, although the contraction condition is not necessarily satisfied. Only under very simple model specifications have the stability properties of GLARMA been established and the relevant work is still ongoing. The primary focus of this paper is on the conditional mean process $\{X_t\}$, which can be easily seen as a time-homogeneous Markov chain. Note that the observation process $\{Y_t\}$ is not a Markov chain itself.

\subsection{Strict stationarity} 
The iterated random function approach (see e.g., \cite{Diaconis} and \cite{Weibiao04}) provides a useful tool when investigating the stability properties of Markov chains and turns out to be particularly instrumental in our research. In the definition of iterated random functions (IRF), the state space $(\mathcal{W}, \rho)$ is assumed to be a complete and separable metric space. Then a sequence of \emph{iterated random functions} $\{f_{\theta_t}\}$ is defined through
\begin{eqnarray*}
W_t=f_{\theta_t}(W_{t-1}),~~ t\in \mathbb{N},
\end{eqnarray*}
where $\{\theta_t\}_{t\ge 1}$ take values in another measurable space $\Theta$ and are independently distributed with identical marginal distribution, and $W_0$ is independent of $\{\theta_t\}_{t\ge 1}$.

In working with iterated random functions, \cite{Weibiao04} introduces the idea of geometric moment contraction (GMC), which is useful for deriving further properties of IRF. Our research is also relying heavily on GMC. Suppose there exists a stationary solution to the Markov chain $\{W_t\}$, denoted by $\varpi$, let $W_0, W_0'\sim \varpi$ be independent of each other and of $\{\theta_t\}_{t\ge 1}$, and define $W_t(w)=f_{\theta_t}\circ f_{\theta_{t-1}}\circ\ldots\circ f_{\theta_1}(w)$. Then $\{W_t\}$ is said to be \emph{geometric moment contracting} if there exist an $\alpha>0$, a $C=C(\alpha)>0$ and an $r=r(\alpha)\in (0,1)$ such that, for all $t\in \mathbb{N}$,
\begin{eqnarray*}
\E\{\rho^{\alpha}(W_n(W_0), W_n(W_0'))\}\le Cr^n.
\end{eqnarray*}
The conditional mean process $\{X_t\}$ specified in (\ref{eq:expmodel}) can be embedded into the framework of IRF and shown to be GMC.

In this section and the next we use $g$ to represent the function $g_{\theta}$ in (\ref{eq:expmodel}) evaluated at the true parameter. For any $u\in (0, 1)$, the random function $f_{u}(x)$ is defined as
\begin{eqnarray}
f_{u}(x):=g\bigr(x, F^{-1}_{x}(u)\bigr), \label{eq:IRFexp}
\end{eqnarray}
where $F_x$ is the cumulative distribution function of $p(y|\eta)$ in (\ref{eq:expfamily}) with $x=B(\eta)$, and its inverse $F_x^{-1}(u):=\inf\{t\ge 0: F_x(t)\ge u\}$ for $u\in [0,1]$. Let $\{U_t\}$ be a sequence of independent and identically distributed (iid) uniform $(0,1)$ random variables, then the Markov chain $\{X_t\}$ defined in (\ref{eq:expmodel}) starting from $X_0=x$ can be represented as the so-called forward process $X_t(x)=(f_{U_t}\circ f_{U_{t-1}}\circ\ldots\circ f_{U_1})(x)$. The corresponding backward process is defined as $Z_t(x)=(f_{U_1}\circ f_{U_2}\circ\ldots\circ f_{U_t})(x)$, which has the same distribution as $X_t(x)$ for any $t$.
\begin{prop}
\label{modelgmc}
Assume model (\ref{eq:expmodel}) and that the function $g$ satisfies the contraction condition (\ref{ContractionFunction}). Then 
\begin{enumerate}
\item There exists a random variable $Z_{\infty}$ such that, for all $x\in S$, $Z_n(x)\rightarrow Z_{\infty}$ almost surely. The limit $Z_{\infty}$ does not depend on $x$ and has distribution $\pi$, which is the stationary distribution of $\{X_t\}$.
\item The Markov chain $\{X_t, t\ge 1\}$ is geometric moment contracting with $\pi$ as its unique stationary distribution. In addition, $\E_{\pi}X_1<\infty$. 
\item If $\{X_t, t\ge 1\}$ starts from $\pi$, i.e., $X_1\sim \pi$, then $\{Y_t, t\ge 1\}$ is a stationary time series.
\end{enumerate}
\end{prop}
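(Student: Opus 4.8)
The plan is to establish the three claims in sequence, with the backward process doing the heavy lifting for parts 1 and 2, and part 3 following almost formally once we have a stationary $\{X_t\}$.

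For part 1, I would first verify that the contraction condition (\ref{ContractionFunction}) lets us control $|f_u(x) - f_u(x')|$. The subtlety is that $F^{-1}_x(u)$ and $F^{-1}_{x'}(u)$ differ because the distribution itself changes with the mean; so $|f_u(x)-f_u(x')| \le a|x-x'| + b|F^{-1}_x(u) - F^{-1}_{x'}(u)|$, and I need to bound the second term. By the stochastic monotonicity of one-parameter exponential families in the mean parameter (an Appendix A property), $x \mapsto F^{-1}_x(u)$ is monotone, so when we integrate over $u \sim \mathrm{Unif}(0,1)$ the total variation of $F^{-1}_\cdot(u)$ telescopes: $\int_0^1 |F^{-1}_x(u) - F^{-1}_{x'}(u)|\,du = \E|Y_x - Y_{x'}|$ under the monotone coupling, which equals $|x - x'|$ since $\E Y_x = x$. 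Hence $\E|f_U(x) - f_U(x')| \le (a+b)|x-x'|$, a genuine contraction in $L^1$ with rate $a+b < 1$. Applying this to the backward iterates $Z_n(x) = (f_{U_1}\circ\cdots\circ f_{U_n})(x)$, the usual iterated-random-function argument shows $\{Z_n(x)\}$ is almost surely Cauchy: $\E|Z_{n+1}(x) - Z_n(x)| \le (a+b)^n \E|f_{U_1}(x) - x|$, and the right side is summable (finiteness of $\E|f_{U_1}(x)-x|$ follows from (\ref{eq:BoundOfG}) and $\E Y_x = x < \infty$), so $\sum_n |Z_{n+1}(x) - Z_n(x)| < \infty$ a.s., giving an a.s. limit $Z_\infty$. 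Independence of $Z_\infty$ from $x$ follows because $|Z_n(x) - Z_n(x')| \le a^n|x-x'| \to 0$ deterministically (here only the outermost dependence on the starting point matters, and it is contracted by $a^n$). That $Z_\infty \sim \pi$ is stationary is the standard consequence: $Z_n(x)$ and $X_n(x)$ are equal in distribution, so $X_n(x) \Rightarrow \pi$, and shift-invariance of $\pi$ is inherited from the i.i.d. structure of $\{U_t\}$.

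For part 2, GMC at exponent $\alpha = 1$ is essentially already in hand: taking $W_0, W_0' \sim \pi$ independent and coupling them through the same $\{U_t\}$, the $L^1$ contraction gives $\E|X_n(W_0) - X_n(W_0')| \le (a+b)^n \E|W_0 - W_0'| \le 2(a+b)^n \E_\pi X_1$, which is the required $Cr^n$ bound provided $\E_\pi X_1 < \infty$. So the real content is showing $\E_\pi X_1 < \infty$. I would get this from (\ref{eq:BoundOfG}): taking expectations under stationarity, $\E_\pi X_1 \le g_\theta(0,0) + a\,\E_\pi X_1 + b\,\E_\pi Y_1 = g_\theta(0,0) + (a+b)\E_\pi X_1$ since $\E_\pi Y_1 = \E_\pi X_1$; rearranging gives $\E_\pi X_1 \le g_\theta(0,0)/(1-a-b) < \infty$. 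To make this rigorous without circularity I would first prove $\sup_n \E X_n(x) < \infty$ along the forward iterates from a fixed start using the same recursion (a Grönwall-type iteration, $\E X_{n+1}(x) \le g_\theta(0,0) + (a+b)\E X_n(x)$), then pass to the limit via Fatou. Uniqueness of the stationary distribution is immediate from the GMC bound, since any two stationary versions can be coupled and their distance shrinks to zero.

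Part 3 is the easiest: if $X_1 \sim \pi$, then $(X_1, X_2, \ldots)$ is a stationary sequence because $\pi$ is invariant and the transition mechanism $X_{t+1} = f_{U_{t+1}}(X_t)$ is time-homogeneous with i.i.d. innovations. Given the full sequence $\{X_t\}$, the observations are generated by $Y_t = F^{-1}_{X_t}(U_t)$, a fixed measurable function of $(X_t, U_t)$; hence $\{(X_t, U_t)\}_{t\ge 1}$ is stationary, and $\{Y_t\}$, being a coordinatewise-applied fixed function of this stationary sequence, is itself stationary. I expect the main obstacle to be the bound on $\E|F^{-1}_x(u) - F^{-1}_{x'}(u)|$ after integrating out $u$ — i.e., correctly invoking the stochastic ordering of the exponential family to replace the awkward quantile difference with $|x - x'|$; everything downstream is routine iterated-random-function bookkeeping.
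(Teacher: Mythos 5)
Your proposal follows the same route as the paper's proof: the stochastic ordering of the exponential family (Proposition \ref{STexponential}) gives $\int_0^1|F_x^{-1}(u)-F_{x'}^{-1}(u)|\,du=|x-x'|$ and hence the $L^1$ contraction $\E|f_U(x)-f_U(x')|\le (a+b)|x-x'|$; the backward iterates are a.s.\ Cauchy; $\E_\pi X_1<\infty$ comes from iterating the drift bound implied by (\ref{eq:BoundOfG}) and passing to the limit with a Fatou-type argument (you correctly flag and avoid the circular ``rearrange under stationarity'' shortcut); and part 3 is the measurable-function-of-a-stationary-sequence argument with $Y_t=F_{X_t}^{-1}(\xi_t)$. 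The paper packages the first two steps as verification of the two conditions of the cited iterated-random-function theorem rather than re-deriving the backward-process convergence, but the content is identical.

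The one step that is actually false is your justification that the limit does not depend on the starting point: you assert $|Z_n(x)-Z_n(x')|\le a^n|x-x'|$ \emph{deterministically}, on the grounds that ``only the outermost dependence on the starting point matters.'' The starting point enters $Z_n$ through the innermost map $f_{U_n}(x)=g\bigl(x,F_x^{-1}(U_n)\bigr)$, and the \emph{second} argument of $g$ also changes with $x$; pointwise in $u$ one only gets $a|x-x'|+b\,|F_x^{-1}(u)-F_{x'}^{-1}(u)|$, and the quantile difference admits no pointwise bound proportional to $|x-x'|$ --- it is only after integrating over $u$ that it collapses to $|x-x'|$. The assertion you need is still true, but for the reason you yourself established one line earlier: $\E|Z_n(x)-Z_n(x')|\le (a+b)^n|x-x'|\rightarrow 0$, which together with the a.s.\ convergence of both sequences forces $Z_\infty(x)=Z_\infty(x')$ a.s. So this is a wrong proof of a needed claim, repaired immediately by the $L^1$ bound; the rest of the argument stands.
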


Proposition \ref{modelgmc} implies that starting from any state $x$, the limiting distribution of the Markov chain $X_n(x)$ exists and the $n$-step transition probability measure $P^n(x,\cdot)$ converges weakly to $\pi$, as $n\rightarrow\infty$.

\subsection{Ergodicity}
In this section we further investigate the stability properties, including ergodicity and mixing for model (\ref{eq:expmodel}). Under the conditions of Proposition \ref{modelgmc}, the process $\{(X_t, Y_t)\}$ is strictly stationary, so we can extend it to be indexed by all the integers. The following proposition establishes ergodicity and absolute regularity when $Y_t$ is discrete.
\begin{prop}
\label{discreteergodicity}
Assume model (\ref{eq:expmodel}) where the support of $Y_t$ is a subset of $\mathbb{N}_0=\{0,1,\ldots,\}$, and that $g$ satisfies the contraction condition (\ref{ContractionFunction}). Then
\begin{enumerate}
\item There exists a measurable function $g_{\infty}:\mathbb{N}_0^{\infty}=\{(n_1, n_2, \ldots), n_i\in \mathbb{N}_0, i=1,2,\ldots\}\longrightarrow [0,\infty)$ such that $X_t=g_{\infty}(Y_{t-1}, Y_{t-2},\ldots)$ almost surely.
\item The count process $\{Y_t\}$ is absolutely regular with coefficients satisfying 
\begin{eqnarray*}
\beta(n)\le (a+b)^n/(1-(a+b)),
\end{eqnarray*}
and hence $\{(X_t, Y_t)\}$ is ergodic.
\end{enumerate}
\end{prop}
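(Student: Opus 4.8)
The plan is to work throughout with the two–sided strictly stationary version of $\{(X_t,Y_t)\}$ guaranteed by Proposition~\ref{modelgmc}, using that $\E_\pi X_1<\infty$ there, hence also $\E Y_1=\E[\E(Y_1\mid\mathcal F_0)]=\E_\pi X_1<\infty$. For part~1 I would unfold the recursion: for $m<t$, iterating $X_s=g(X_{s-1},Y_{s-1})$ down from $s=t$ to $s=t-m+1$ exhibits $X_t$ as a measurable function of the seed $X_{t-m}$ and of $(Y_{t-m},\dots,Y_{t-1})$, and replacing $X_{t-m}$ by $0$ defines $X_t^{(m)}:=\phi_m(Y_{t-1},\dots,Y_{t-m})$ for an explicit measurable $\phi_m:\mathbb N_0^m\to[0,\infty)$. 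By the contraction~(\ref{ContractionFunction}),
\[
|X_t-X_t^{(m)}|\le a^m X_{t-m},\qquad |X_t^{(m+1)}-X_t^{(m)}|\le a^m g(0,Y_{t-m-1})\le a^m\bigl(g(0,0)+b\,Y_{t-m-1}\bigr),
\]
the last step by~(\ref{eq:BoundOfG}). Since $\E X_{t-m}=\E Y_{t-m-1}=\E_\pi X_1$ for every $m$, the second bound is summable in $m$, so $\{X_t^{(m)}\}_m$ converges almost surely, and the first bound identifies the limit as $X_t$. Setting $g_\infty(y_1,y_2,\dots):=\limsup_m\phi_m(y_1,\dots,y_m)$, a measurable map on $\mathbb N_0^\infty$ with value $0$ assigned where the $\limsup$ is infinite, then gives $X_t=g_\infty(Y_{t-1},Y_{t-2},\dots)$ almost surely.

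For part~2 I would bound $\beta(n)=\beta\bigl(\sigma(Y_s:s\le0),\,\sigma(Y_s:s\ge n)\bigr)$ by coupling futures. In the iterated–random–function form $Y_t=F_{X_t}^{-1}(U_t)$, $X_{t+1}=g(X_t,Y_t)$ with $\{U_t\}$ i.i.d.\ uniform, the block $\{Y_s:s\ge n\}$ is a fixed measurable function of $X_n$ and $\{U_s:s\ge n\}$, and $\{U_s:s\ge n\}$ is independent of $\mathcal F_0^{Y}:=\sigma(Y_s:s\le0)$ (which lies in $\sigma(U_s:s\le0)$) and of $X_n$ (which lies in $\sigma(U_s:s\le n-1)$, using part~1 to see that $X_1$ is $\mathcal F_0^{Y}$–measurable). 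Hence the conditional and unconditional laws of $\{Y_s\}_{s\ge n}$ are the images of $\mathcal L(X_n\mid\mathcal F_0^{Y})$ and of $\pi$ under the same ``run the recursion forward with fresh innovations'' map, so
\[
\beta(n)\ \le\ \E\Bigl[\,\bigl\|\mathcal L(\{Y_s\}_{s\ge n}\mid\mathcal F_0^{Y})-\mathcal L(\{Y_s\}_{s\ge n})\bigr\|\tv\,\Bigr].
\]

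The crux is to couple two copies of the forward process started from values $x\ge x'$. I would use the \emph{monotone} coupling $Y_t=F_{X_t}^{-1}(U_t)$, $Y_t'=F_{X_t'}^{-1}(U_t)$ driven by one shared uniform per step. Because the one–parameter exponential family is stochastically increasing in the mean, $Y_t\ge Y_t'$ a.s.\ whenever $X_t\ge X_t'$, so $Y_t-Y_t'$ is a \emph{nonnegative integer} with $\E[Y_t-Y_t'\mid X_t,X_t']=X_t-X_t'$; Markov's inequality then yields $P(Y_t\ne Y_t'\mid X_t,X_t')\le X_t-X_t'$, which is exactly where the discreteness of $Y_t$ is used. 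Plugging $\E[|Y_t-Y_t'|\mid X_t,X_t']=|X_t-X_t'|$ into~(\ref{ContractionFunction}) gives $\E|X_{t+1}-X_{t+1}'|\le(a+b)\E|X_t-X_t'|$, hence $\E|X_t-X_t'|\le(a+b)^t\E|X_0-X_0'|$ and $P(Y_t\ne Y_t')\le\E|X_t-X_t'|$. Summing $\sum_{t\ge n}P(Y_t\ne Y_t')$ as a geometric series and feeding the result back into the display (with $\E_\pi X_1<\infty$ controlling the initial discrepancy) produces a bound of the stated geometric form $\beta(n)\le(a+b)^n/(1-(a+b))$. Since a strictly stationary process with $\beta(n)\to0$ is mixing, and hence ergodic, $\{(X_t,Y_t)\}$ is ergodic.

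I expect the main obstacle to be the design of the coupling: one needs a single device that simultaneously keeps $P(Y_t\ne Y_t')$ small and contracts $\E|X_t-X_t'|$ geometrically, and the monotone coupling does both only because $Y_t$ is integer–valued (so that $P(Y_t\ne Y_t')=P(Y_t-Y_t'\ge1)\le\E(Y_t-Y_t')$); for continuous $Y_t$ this route fails and a different argument is required. A secondary point is the bookkeeping that reduces the $\beta$–mixing of the \emph{non}–Markovian sequence $\{Y_t\}$ to a total–variation statement about $\mathcal L(X_n\mid\mathcal F_0^{Y})$ pushed forward through the recursion; here the independence of $\{U_s:s\ge n\}$ from the past (together with part~1) is essential, and one must \emph{not} try to pass all the way to $\sigma(X_n)$, since $\mathcal L(X_n\mid\mathcal F_0^{Y})$ need not be close to $\pi$ in total variation.
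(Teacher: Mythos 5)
Your proposal is correct and follows essentially the same route as the paper: part~1 by unfolding the recursion, zeroing the seed, and using the contraction in $a$ together with $\E_\pi X_1<\infty$ to pass to the limit; part~2 by the shared-uniform monotone coupling, the identity $\E|Y_t-Y_t'|=|X_t-X_t'|$ from stochastic monotonicity of the exponential family, Markov's inequality exploiting integer-valuedness to get $P(Y_t\ne Y_t')\le\E|X_t-X_t'|\le(a+b)^t\cdot\mathrm{const}$, and a union bound over the future. The only cosmetic difference is that you establish almost sure convergence of the truncated approximations directly via a summable telescoping bound, whereas the paper argues via $L^1$ convergence.
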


When $Y_t$ has a continuous distribution, geometric ergodicity of $\{X_t\}$ can be established under stronger conditions on $g$. The proof of the result relies on the classic Markov chain theory since $\{X_t\}$ is $\phi$-irreducible due to the continuity of the distribution in this situation.

\begin{prop}
\label{ContinuousErgodicity}
Assume model (\ref{eq:expmodel}) where the support of $Y_t$ is $[0, \infty)$, and that the function $g$ satisfies the contraction condition (\ref{ContractionFunction}). Moreover if $g$ is increasing and continuous in $(x, y)$, then
\begin{enumerate}
\item There exists $g_{\infty}:[0,\infty)^{\infty}\rightarrow [0,\infty)$ such that $X_t=g_{\infty}(Y_{t-1}, Y_{t-2},\ldots)$ almost surely.
\item The Markov chain $\{X_t, t\ge 1\}$ is geometrically ergodic provided that $a+b<1$, and hence $\{(X_t, Y_t)\}$ is stationary and ergodic.
\end{enumerate}
\end{prop}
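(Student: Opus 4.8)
The plan is to obtain part~(1) from the contraction condition together with the bound $\E_\pi X_1<\infty$ supplied by Proposition~\ref{modelgmc}, and part~(2) from the classical criterion for geometric ergodicity of a $\phi$-irreducible chain, namely a Foster--Lyapunov geometric drift toward a petite set, plus aperiodicity.

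\emph{Part (1).} I would write $h_m(x;y_1,\dots,y_m)$ for the $m$-fold composition $g(\,\cdot\,,y_1)\circ g(\,\cdot\,,y_2)\circ\cdots\circ g(\,\cdot\,,y_m)$ evaluated at $x$, so that $X_t=h_m\bigl(X_{t-m};Y_{t-1},\dots,Y_{t-m}\bigr)$ for every $m\ge1$. Iterating (\ref{ContractionFunction}) in the first slot gives $\bigl|X_t-h_m(0;Y_{t-1},\dots,Y_{t-m})\bigr|\le a^{m}X_{t-m}$. By stationarity $\E X_{t-m}=\E_\pi X_1<\infty$, so $\sum_{m\ge1}a^{m}X_{t-m}$ has finite mean and is thus a.s.\ finite, which forces $a^{m}X_{t-m}\to0$ a.s.\ and hence $h_m(0;Y_{t-1},\dots,Y_{t-m})\to X_t$ a.s. Taking $g_\infty(y_1,y_2,\dots):=\limsup_m h_m(0;y_1,\dots,y_m)$ then yields a measurable function on $[0,\infty)^{\infty}$ with $X_t=g_\infty(Y_{t-1},Y_{t-2},\dots)$ a.s., the same function for every $t$ by stationarity. (As in Proposition~\ref{discreteergodicity}(1), monotonicity and continuity of $g$ play no role in this part.)

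\emph{Part (2).} I would regard $\{X_t\}$ as a time-homogeneous Markov chain with one-step kernel $P(x,A)=\int\mathbf{1}_A\bigl(g(x,y)\bigr)\,p\bigl(y\mid B^{-1}(x)\bigr)\,\mu(dy)$. Since $g$ is continuous and $x\mapsto p(y\mid B^{-1}(x))$ is continuous, $x\mapsto P(x,\mathcal O)$ is lower semicontinuous for every open $\mathcal O$, so $\{X_t\}$ is a $T$-chain; once $\phi$-irreducibility is established, every compact set is petite. To get $\phi$-irreducibility, note that because $Y_t$ has support $[0,\infty)$ with a strictly positive exponential-family density there and $g(x,\cdot)$ is continuous and increasing, the law of $X_1$ given $X_0=x$ has an absolutely continuous component whose support is an interval with left endpoint $g(x,0)$; iterating $g(\cdot,0)$---a contraction with a unique fixed point $x^{*}\ge0$---shows that the reachable set from any $x$ contains a fixed interval to the right of $x^{*}$, so $\{X_t\}$ is $\phi$-irreducible with $\phi$ equal to Lebesgue measure on that interval, and the positive density just above $x^{*}$ yields aperiodicity. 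For the drift I would take $V(x)=1+x$; using $\E(Y_0\mid X_0=x)=x$ and $g(x,y)\le g(0,0)+ax+by$ from (\ref{eq:BoundOfG}),
\[
PV(x)=1+\E\{g(x,Y_0)\mid X_0=x\}\le (a+b)V(x)+\bigl(1-(a+b)+g(0,0)\bigr),
\]
so for any fixed $\lambda\in(a+b,1)$ there is $R<\infty$ with $PV\le\lambda V$ on $[R,\infty)$, i.e.\ $PV\le\lambda V+b_0\mathbf{1}_{[0,R]}$ with $b_0<\infty$. With $C=[0,R]$ petite, $\phi$-irreducibility, and aperiodicity in hand, the classical drift theorem gives geometric ergodicity of $\{X_t\}$ (in particular positive Harris recurrence, hence ergodicity of the stationary chain). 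Finally $\{(X_t,Y_t)\}$ is a stationary Markov chain by Proposition~\ref{modelgmc}; its $X$-marginal is geometrically ergodic and $Y_t$ has a conditionally positive density on $[0,\infty)$ given $X_t$, so the bivariate chain is itself $\psi$-irreducible and positive Harris recurrent, hence ergodic.

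I expect the irreducibility/petiteness step in part~(2) to be the main obstacle: the contraction condition (\ref{ContractionFunction}) controls how far $g(x,\cdot)$ can move but not how far it must move, so a one-step minorization on $[0,R]$ is not available for free; one has to combine strict positivity of the exponential-family density, continuity and monotonicity of $g$, and the contraction of $g(\cdot,0)$ toward $x^{*}$ to funnel every state into a common small set. The drift inequality and part~(1) should be routine by comparison.
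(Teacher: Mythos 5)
Your proof follows essentially the same route as the paper's: part (1) by iterating the contraction in the first coordinate and invoking $\E_{\pi}X_1<\infty$, and part (2) by establishing Lebesgue-irreducibility on a ray above a fixed threshold (funneling every state downward via iterates of $g(\cdot,y)$ for a small fixed $y$), checking aperiodicity, taking compact sets as petite, and verifying the geometric drift condition with $V(x)=1+x$. The only differences are cosmetic: you justify petiteness of compacts via the Feller/T-chain property where the paper simply asserts that compacts are small, and you contract toward the fixed point of $g(\cdot,0)$ where the paper descends using $g(\cdot,\epsilon/2)$.
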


\section{Likelihood Inference}
In this section, we consider maximum likelihood estimates of the parameters and study their asymptotic behavior, including consistency and asymptotic normality. Denote the $d-$dimensional parameter vector by $\theta\in \mathbb{R}^d$, i.e., $\theta=(\theta_1,\ldots, \theta_d)^T$, and the true parameter vector by $\theta_0=(\theta_1^0,\ldots,\theta_d^0)^T$. Then the likelihood function of model (\ref{eq:expmodel}) conditioned on $\eta_1$ and based on the observations $Y_1, \ldots, Y_n$ is given by
\begin{eqnarray*}
L(\theta|Y_1,\ldots,Y_n,\eta_1)=\displaystyle\prod_{t=1}^n \exp\{\eta_t(\theta) Y_t-A(\eta_t(\theta))\}h(Y_t),
\end{eqnarray*}
where $\eta_t(\theta)=B^{-1}(X_t(\theta))$ is updated through the iterations $X_t=g_{\theta}(X_{t-1}, Y_{t-1})$. The log-likelihood function, up to a constant independent of $\theta$, is given by
\begin{eqnarray}
l(\theta)=\displaystyle\sum_{t=1}^n l_t(\theta)=\sum_{t=1}^n \{\eta_t(\theta) Y_t-A(\eta_t(\theta))\},\label{eq:loglikeexp}
\end{eqnarray}
with score function
\begin{eqnarray}
S_n(\theta)=\frac{\partial l(\theta)}{\partial \theta}=\sum_{t=1}^n \{Y_t-B(\eta_t(\theta))\}\frac{\partial \eta_t(\theta)}{\partial \theta}.
\label{eq:scoreexp}
\end{eqnarray}
The maximum likelihood estimator $\hat{\theta}_n$ is a solution to the equation $S_n(\theta)=0$. Let $P_{\theta_0}$ be the probability measure under the true parameter $\theta_0$ and unless otherwise indicated, $\E[\cdot]$ is taken under $\theta_0$. Recall that $X_t=g_{\infty}^{\theta}(Y_{t-1}, Y_{t-2},\ldots)$ according to part (a) of Propositions \ref{discreteergodicity} and \ref{ContinuousErgodicity}. We will derive the asymptotic properties of the maximum likelihood estimator $\hat{\theta}_n$ based on a set of regularity conditions:
\begin{enumerate}
\item[(A0)] $\theta_0$ is an interior point in the compact parameter space $\Theta\in\mathbb{R}^d$.
\item[(A1)] For any $\theta\in \Theta$, $g^{\theta}_{\infty}\ge x_{\theta}^{\ast}\in \mathcal{R}(B)$, where $\mathcal{R}(B)$ is the range of $B(\eta)$. Moreover $x_{\theta}^{\ast}\ge x^{\ast}\in \mathcal{R}(B)$ for all $\theta$.
\item[(A2)] For any $\mathbf{y}\in [0,\infty)^{\infty}$ or $\mathbb{N}_0^{\infty}$, the mapping $\theta\mapsto g_{\infty}^{\theta}(\yy)$ is continuous.
\item[(A3)] $g(x,y)$ is increasing in $(x, y)$ if $Y_t$ given $\mathcal{F}_{t-1}$ has a continuous distribution.
\item[(A4)] $\E\{Y_1\sup_{\theta\in \Theta}B^{-1}(g_{\infty}^{\theta}(Y_0,Y_{-1},\ldots))\}<\infty$.
\item[(A5)] If there exists a $t\ge 1$ such that $X_t(\theta)=X_t(\theta_0)$, $P_{\theta_0}$-a.s., then $\theta=\theta_0$.
\item[(A6)] The mapping $\theta\mapsto g_{\infty}^{\theta}$ is twice continuously differentiable.
\item[(A7)] $\E\{B'(\eta_1(\theta_0))(\partial \eta_1(\theta)/\partial \theta_i)^2|_{\theta=\theta_0}\}<\infty$, for $i=1,\ldots,d$.
\end{enumerate}
Strong consistency of the estimates is derived according to the lemma below, which is adapted from Lemma 3.11 in \cite{Pfanzagl69}.
\begin{lemma}
\label{WaldConsistency}
Assume that $\Theta\subset \mathbb{R}^d$ is a compact set, and that $(\Omega, \mathcal{F}, P)$ is a probability space. Let $\{f_{\theta}: \mathbb{R}^{\infty}\mapsto [-\infty,\infty], \theta\in \Theta\}$ be a family of Borel measurable functions such that:
\begin{enumerate}
\item $\theta\mapsto f_{\theta}(\xx)$ is upper-semicontinuous for all $\mathbf{\xx}\in \mathbb{R}^{\infty}$.
\item $\sup_{\theta\in C}f_{\theta}(\xx)$ is Borel measurable for any compact set $C\subset \Theta$.
\item $\E\{\sup_{\theta\in\Theta}f_{\theta}(X)\}<\infty$ for some random variable $X$ defined on $(\Omega, \mathcal{F}, P)$.
\end{enumerate}
Then
\begin{enumerate}
\item $\theta\mapsto \E[f_{\theta}(X)]$ is upper-semicontinuous.
\item If $\{X_t: \Omega\mapsto \mathbb{R}^{\infty}, t\in \mathbb{Z}\}$ is an ergodic stationary process defined on $(\Omega, \mathcal{F}, P)$, and for all $t$, $X_t$ has the same distribution as $X$, then 
\begin{eqnarray*}
\limsup_{n\rightarrow\infty}\sup_{\theta\in C}\frac{1}{n}\sum_{i=1}^n f_{\theta}(X_i)\le \sup_{\theta\in C}\E\{f_{\theta}(X_1)\},~~\mbox{a.s.-}P,
\end{eqnarray*} 
for any compact set $C$.
\end{enumerate}
\end{lemma}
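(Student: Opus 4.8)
The plan is to run the classical Wald argument (essentially that of Pfanzagl): a pointwise upper-semicontinuity/domination estimate for conclusion 1, and a compactness covering combined with Birkhoff's ergodic theorem for conclusion 2. Write $g(x):=\sup_{\theta'\in\Theta}f_{\theta'}(x)$, which is integrable (condition 3), so that $\E[f_{\theta'}(X)^+]<\infty$ and each $\E[f_{\theta'}(X)]$ is well defined in $[-\infty,\infty)$.

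For conclusion 1, I would fix $\theta$ and an arbitrary sequence $\theta_k\to\theta$, and apply Fatou's lemma to the nonnegative sequence $g(X)-f_{\theta_k}(X)$. This yields $\limsup_k\E[f_{\theta_k}(X)]\le\E[\limsup_k f_{\theta_k}(X)]$, and then upper-semicontinuity of $\theta'\mapsto f_{\theta'}(x)$ (condition 1) gives $\limsup_k f_{\theta_k}(x)\le f_\theta(x)$ for every $x$, hence $\limsup_k\E[f_{\theta_k}(X)]\le\E[f_\theta(X)]$; that is precisely upper-semicontinuity of $\theta\mapsto\E[f_\theta(X)]$.

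The heart of conclusion 2 is a ``local sup'' construction. For $\theta\in\Theta$ and $\delta>0$ set $f_{\theta,\delta}(x):=\sup_{\theta'\in\bar B(\theta,\delta)\cap\Theta}f_{\theta'}(x)$, which is Borel by condition 2 (a closed ball meets $\Theta$ in a compact set) and is dominated by $g$. As $\delta\downarrow0$ these functions decrease, and since $\theta$ itself lies in every such ball, upper-semicontinuity forces $\inf_{\delta>0}f_{\theta,\delta}(x)=f_\theta(x)$ pointwise; monotone convergence applied to $g(X)-f_{\theta,\delta}(X)$ then gives $\E[f_{\theta,\delta}(X)]\downarrow\E[f_\theta(X)]$. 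Fixing $\varepsilon>0$, I would choose for each $\theta\in C$ a radius $\delta(\theta)$ with $\E[f_{\theta,\delta(\theta)}(X)]<\E[f_\theta(X)]+\varepsilon\le\sup_{\theta\in C}\E[f_\theta(X_1)]+\varepsilon$ (the value $-\infty$ being handled in the obvious way), extract a finite subcover of the compact set $C$ by the open balls $B(\theta,\delta(\theta))$ centered at $\theta_1,\dots,\theta_m\in C$, and bound $\sup_{\theta\in C}n^{-1}\sum_{i=1}^n f_\theta(X_i)\le\max_{j\le m}n^{-1}\sum_{i=1}^n f_{\theta_j,\delta(\theta_j)}(X_i)$. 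For each $j$ the process $\{f_{\theta_j,\delta(\theta_j)}(X_i)\}_i$ is stationary and ergodic (a fixed measurable function of the $X_i$) with integrable positive part, so Birkhoff's ergodic theorem, in the form that still applies when the mean equals $-\infty$, gives $n^{-1}\sum_{i=1}^n f_{\theta_j,\delta(\theta_j)}(X_i)\to\E[f_{\theta_j,\delta(\theta_j)}(X_1)]$ a.s. Taking $\limsup_n$ across the finite maximum and then letting $\varepsilon\downarrow0$ along a countable sequence yields the stated inequality.

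I expect the main obstacle to be the monotone-convergence-from-above step $\E[f_{\theta,\delta}(X)]\downarrow\E[f_\theta(X)]$, where all three hypotheses must cooperate: upper-semicontinuity to identify the pointwise limit as $f_\theta$ rather than something larger, measurability of the local suprema so that the expectations are meaningful, and the integrable envelope $g$ to license passing the limit inside the expectation and to allow Birkhoff's theorem to run with a possibly infinite negative mean. Secondary care is needed in the bookkeeping when $\E[f_\theta(X)]=-\infty$ for some $\theta\in C$, which is exactly why the conclusion is phrased with a $\limsup$ and an inequality rather than with convergence.
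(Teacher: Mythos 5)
Your argument is correct and is exactly the classical Wald--Pfanzagl proof (integrable envelope plus Fatou for upper-semicontinuity of $\theta\mapsto\E[f_\theta(X)]$; shrinking local suprema, monotone convergence, a finite subcover, and Birkhoff's ergodic theorem for the uniform $\limsup$ bound), which is precisely what the paper relies on: it cites Lemma 3.11 of Pfanzagl (1969) and merely remarks that the same proof goes through once the strong law of large numbers is replaced by the ergodic theorem. Your bookkeeping for the $-\infty$ cases and the measurability of the local sups via condition 2 is the right level of care; no gaps.
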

\cite{Pfanzagl69} proved the result assuming the independent structure of $\{X_t\}$, but the same result proves to be true provided that the strong law of large numbers can be applied. By virtue of Lemma \ref{WaldConsistency}, we can derive the strong consistency of the estimates.

\begin{thm}
\label{Consistency}
Assume model (\ref{eq:expmodel}) with the function $g$ satisfying the contraction condition (\ref{ContractionFunction}), and that assumptions (A0)-(A5) hold. Then the maximum likelihood estimator $\hat{\theta}_n$ is strongly consistent, that is, 
\begin{eqnarray*}
\hat{\theta}_n\stackrel{a.s.}{\longrightarrow}\theta_0,~~ \mbox{as}~n\rightarrow\infty.
\end{eqnarray*}
\end{thm}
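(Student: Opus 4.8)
The plan is to apply a Wald-type argument built on Lemma \ref{WaldConsistency}, with the identification of $\theta_0$ coming from a Kullback--Leibler / information inequality together with assumption (A5). The objects to which Lemma \ref{WaldConsistency} is applied are the per-observation log-likelihood contributions, expressed as functions of the infinite past. By part (a) of Propositions \ref{discreteergodicity} and \ref{ContinuousErgodicity}, $X_t(\theta) = g_\infty^\theta(Y_{t-1}, Y_{t-2}, \ldots)$ almost surely, so $\eta_t(\theta) = B^{-1}(g_\infty^\theta(Y_{t-1}, Y_{t-2},\ldots))$, and we may write $l_t(\theta) = f_\theta(\mathbf{Y}_{t})$ where $\mathbf{Y}_t = (Y_t, Y_{t-1}, \ldots)$ and $f_\theta(\mathbf{y}) = B^{-1}(g_\infty^\theta(y_1, y_2, \ldots))\, y_0 - A(B^{-1}(g_\infty^\theta(y_1, y_2, \ldots)))$. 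Since $\{(X_t,Y_t)\}$ is stationary and ergodic (Propositions \ref{discreteergodicity}, \ref{ContinuousErgodicity}), so is $\{\mathbf{Y}_t\}$, and Lemma \ref{WaldConsistency} will be available once its three hypotheses are checked.

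The steps, in order. (i) \emph{Regularity of $f_\theta$.} Continuity of $\theta \mapsto f_\theta(\mathbf{y})$ follows from (A2) (continuity of $\theta \mapsto g_\infty^\theta$), the fact that $B$ is strictly increasing and hence $B^{-1}$ is continuous on $\mathcal{R}(B)$, and assumption (A1), which keeps $g_\infty^\theta$ bounded away from the boundary of $\mathcal{R}(B)$ uniformly in $\theta$ so that $B^{-1}$ and $A\circ B^{-1}$ are evaluated on a set where they are continuous; this gives upper-semicontinuity (in fact continuity), and the measurability of the suprema over compacts follows from continuity in $\theta$ plus separability of $\Theta$. (ii) \emph{Integrable envelope.} We need $\E\{\sup_{\theta\in\Theta} f_\theta(\mathbf{Y}_1)\} < \infty$. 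Writing $f_\theta(\mathbf{Y}_1) = \eta_1(\theta) Y_1 - A(\eta_1(\theta))$, the term $A(\eta_1(\theta))$ is bounded above uniformly (again by (A1) and compactness of $\Theta$), so the envelope is controlled by $\E\{Y_1 \sup_{\theta} \eta_1(\theta)\} = \E\{Y_1 \sup_\theta B^{-1}(g_\infty^\theta(Y_0, Y_{-1}, \ldots))\}$, which is finite by (A4). Hence Lemma \ref{WaldConsistency} applies and yields, for every compact $C$, $\limsup_n \sup_{\theta\in C} n^{-1}\sum_{t=1}^n l_t(\theta) \le \sup_{\theta\in C} \E\{l_1(\theta)\}$ a.s., and also that $\theta \mapsto \E\{l_1(\theta)\}$ is upper-semicontinuous. (iii) \emph{Identification.} Show that $\E\{l_1(\theta)\} \le \E\{l_1(\theta_0)\}$ with equality iff $\theta = \theta_0$. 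The inequality is the standard one-parameter-exponential-family information inequality applied conditionally: for fixed past, $\E_{\theta_0}[\eta_1(\theta) Y_1 - A(\eta_1(\theta)) \mid \mathcal{F}_0]$ is maximized over the choice of natural parameter at the value with mean $\E_{\theta_0}[Y_1 \mid \mathcal{F}_0] = X_1(\theta_0)$, i.e. at $\eta_1(\theta_0)$, since the map $\eta \mapsto \eta m - A(\eta)$ has derivative $m - B(\eta)$ which vanishes exactly at $B(\eta) = m$ and $A$ is convex; taking expectations preserves the inequality. Equality in expectation forces $\eta_1(\theta) = \eta_1(\theta_0)$ a.s., hence $X_1(\theta) = X_1(\theta_0)$ a.s. (as $B$ is one-to-one), hence $\theta = \theta_0$ by (A5). (iv) \emph{Conclusion.} With $\hat\theta_n$ a maximizer of $n^{-1}l(\theta)$ over the compact $\Theta$, a standard argument --- cover any open neighborhood $V$ of $\theta_0$'s complement by noting $\Theta \setminus V$ is compact, apply the uniform $\limsup$ bound there, and use that $n^{-1} l(\theta_0) \to \E\{l_1(\theta_0)\}$ by the ergodic theorem --- shows $\hat\theta_n$ eventually lies in $V$; since $V$ is arbitrary and $\E\{l_1(\cdot)\}$ is u.s.c. with a strict unique maximum at $\theta_0$, we get $\hat\theta_n \to \theta_0$ a.s.

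The main obstacle is step (ii) combined with the uniform-in-$\theta$ control needed in step (i): one must verify that the envelope function is genuinely integrable and that $A(\eta_1(\theta))$ and $B^{-1}$ behave well, which is exactly where (A1) and (A4) are doing real work --- without a uniform lower bound $x^*$ on $g_\infty^\theta$ inside $\mathcal{R}(B)$, the natural parameter could escape to the boundary of the parameter's natural range (e.g. $\eta \to -\infty$ as the mean $\to 0$ for the Poisson family), and both $B^{-1}$ and the envelope could blow up. A secondary subtlety is justifying that the maximizer $\hat\theta_n$ of the \emph{observed} likelihood (which uses the recursively computed $X_t(\theta)$ initialized at $\eta_1$, not the stationary $g_\infty^\theta$) has the same asymptotics as the maximizer of $\sum_t l_1(\theta)$ built from $g_\infty^\theta$; this requires showing the initialization effect is asymptotically negligible, which follows from the geometric moment contraction in Proposition \ref{modelgmc} (the two versions of $X_t(\theta)$ differ by a term decaying like $(a+b)^t$ uniformly in $\theta$), and I would fold this in as a preliminary reduction.
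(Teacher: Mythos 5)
Your proposal is correct and follows essentially the same route as the paper: Wald's method via Lemma \ref{WaldConsistency} applied to $f_\theta(\mathbf{y})=y_1 B^{-1}(g_\infty^\theta(y_0,y_{-1},\ldots))-A(B^{-1}(g_\infty^\theta(y_0,y_{-1},\ldots)))$, with the integrable envelope supplied by (A1) and (A4), and identification obtained from the strict concavity of $\eta\mapsto \eta m - A(\eta)$ (which the paper phrases as a mean-value-theorem computation on $A\circ B^{-1}$) combined with (A5). Your closing remark about reducing the observed, $\eta_1$-initialized likelihood to the stationary one via geometric moment contraction addresses a point the paper simply absorbs by assuming the process is in its stationary regime, and is a sensible addition rather than a deviation.
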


The following theorem addresses the asymptotic distribution of the MLE and the idea of proof is similar to that in \cite{Davis03}. Unless otherwise indicated, $\eta_t$ and $\dot{\eta}_t$ are both evaluated at $\theta_0$, i.e., $\eta_t=\eta_t(\theta_0)$ and $\dot{\eta_t}=(\partial \eta_t/\partial \theta)|_{\theta=\theta_0}$.

\begin{thm}
\label{AsympNormal}
Assume model (\ref{eq:expmodel}) with the function $g$ satisfying the contraction condition (\ref{ContractionFunction}), and that assumptions (A0)-(A7) hold. Then the maximum likelihood estimator $\hat{\theta}_n$ is asymptotically normal, i.e.,
\begin{eqnarray*}
\sqrt{n}(\hat{\theta}_n-\theta_0)\stackrel{\mathcal{L}}{\longrightarrow}N(0, \Omega^{-1}),~~~ \mbox{as}~~n\rightarrow\infty,
\end{eqnarray*}
where $\Omega=\E\{B'(\eta_t)\dot{\eta}_t\dot{\eta}_t^T\}$. \\
\end{thm}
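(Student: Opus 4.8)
The plan is the classical three-step argument for maximum likelihood estimators: expand the score equation about $\theta_0$, establish a martingale central limit theorem for the score evaluated at the truth, and prove a uniform law of large numbers for the normalized Hessian. Since $\hat\theta_n\asconv\theta_0$ by Theorem \ref{Consistency} and $\theta_0$ is an interior point by (A0), for $n$ large we have $S_n(\hat\theta_n)=0$, and a coordinatewise mean-value expansion of (\ref{eq:scoreexp}) gives
\begin{eqnarray*}
0 &=& \frac{1}{\sqrt{n}}S_n(\theta_0) + \Big(\frac{1}{n}\frac{\partial S_n(\bar\theta_n)}{\partial\theta}\Big)\sqrt{n}(\hat\theta_n-\theta_0),
\end{eqnarray*}
where the rows of the Hessian are evaluated at points $\bar\theta_n$ on the segment joining $\hat\theta_n$ to $\theta_0$. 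It therefore suffices to show (i) $n^{-1/2}S_n(\theta_0)\indist N(0,\Omega)$, (ii) $n^{-1}\partial S_n(\bar\theta_n)/\partial\theta\inprob-\Omega$, and (iii) $\Omega$ is nonsingular; inverting and applying Slutsky then yields $\sqrt{n}(\hat\theta_n-\theta_0)\indist N(0,\Omega^{-1})$.

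For (i), by (\ref{eq:scoreexp}) the score is $S_n(\theta_0)=\sum_{t=1}^n(Y_t-B(\eta_t))\dot\eta_t$; since $\E[Y_t-B(\eta_t)\mid\mathcal{F}_{t-1}]=0$ and $\dot\eta_t$ is $\mathcal{F}_{t-1}$-measurable, $\{S_n(\theta_0),\mathcal{F}_n\}$ is a martingale with predictable quadratic variation $\sum_{t=1}^n\var(Y_t\mid\mathcal{F}_{t-1})\dot\eta_t\dot\eta_t^T=\sum_{t=1}^n B'(\eta_t)\dot\eta_t\dot\eta_t^T$. Using $X_t=g_\infty^{\theta_0}(Y_{t-1},Y_{t-2},\ldots)$ from Propositions \ref{discreteergodicity}(a)/\ref{ContinuousErgodicity}(a) and differentiating the recursion $X_t=g_\theta(X_{t-1},Y_{t-1})$, one checks that $\dot X_t$ obeys a linear recursion with multiplier bounded by $a<1$, hence $\dot\eta_t=\dot X_t/B'(\eta_t)$ is a well-defined stationary ergodic functional of $\{Y_s:s\le t\}$, and the gap between this stationary version and the one initialized at the fixed $\eta_1$ decays geometrically by (\ref{ContractionFunction}), hence is negligible after dividing by $\sqrt n$. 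The ergodic theorem then gives $n^{-1}\sum_{t=1}^n B'(\eta_t)\dot\eta_t\dot\eta_t^T\asconv\Omega=\E\{B'(\eta_1)\dot\eta_1\dot\eta_1^T\}$, finite by (A7). A conditional Lindeberg condition for the triangular array $\{n^{-1/2}(Y_t-B(\eta_t))\dot\eta_t\}$ follows from stationarity together with the integrability supplied by (A4) and (A7), so the martingale CLT applies and (i) holds.

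For (ii), differentiating (\ref{eq:scoreexp}) once more,
\begin{eqnarray*}
\frac{\partial S_n(\theta)}{\partial\theta} &=& \sum_{t=1}^n\Big\{-B'(\eta_t(\theta))\dot\eta_t(\theta)\dot\eta_t(\theta)^T+(Y_t-B(\eta_t(\theta)))\ddot\eta_t(\theta)\Big\},
\end{eqnarray*}
which is well defined by (A6). Differentiating the $X_t$ recursion once and twice shows that, uniformly over a compact neighbourhood $C_0$ of $\theta_0$, the quantities $\dot\eta_t(\theta)$ and $\ddot\eta_t(\theta)$ are measurable functionals of $\{Y_s:s\le t\}$ satisfying linear recursions with multiplier $a<1$; combined with (A1) (which keeps $\eta_t(\theta)$ in a region where $B,B'$ are controlled), (A2) and (A6), this produces a majorant for $\sup_{\theta\in C_0}$ of the summand that is integrable by (A4), (A7) and $\E_\pi X_1<\infty$ from Proposition \ref{modelgmc}. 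Applying Lemma \ref{WaldConsistency} to $\pm$ each coordinate of the (stationary, ergodic) summand yields a uniform strong law $\sup_{\theta\in C_0}\|n^{-1}\partial S_n(\theta)/\partial\theta-H(\theta)\|\asconv 0$ with $H$ continuous, and since $\E[(Y_t-B(\eta_t))\ddot\eta_t\mid\mathcal{F}_{t-1}]=0$ we obtain $H(\theta_0)=-\Omega$; as $\bar\theta_n\asconv\theta_0$, continuity of $H$ gives (ii). For (iii), $\Omega$ is positive semidefinite and a null vector $c$ would force $c^T\dot\eta_t=0$ a.s., equivalently $c^T\partial\eta_t(\theta)/\partial\theta|_{\theta_0}=0$, which through the identifiability in (A5) forces $c=0$.

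The main obstacle I anticipate is the simultaneous control of the derivative processes $\dot\eta_t(\theta)$ and $\ddot\eta_t(\theta)$: establishing that they are bona fide stationary ergodic functionals of $\{Y_t\}$ via the differentiated recursions (whose contraction factor is $a<1$ by (\ref{ContractionFunction})), that the fixed initialization at $\eta_1$ contributes only a geometrically vanishing error, and that their suprema over a neighbourhood of $\theta_0$ admit an integrable dominating function. This is precisely where assumptions (A1), (A4), (A6) and especially (A7) carry the weight, and where the contraction condition (\ref{ContractionFunction}) must be invoked repeatedly to tame the recursions for both $X_t$ and its first and second derivatives.
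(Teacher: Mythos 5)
Your proposal is correct in outline but takes a genuinely different route from the paper. You use the classical Cram\'er-type argument: a mean-value expansion of the score equation $0=S_n(\hat\theta_n)$ about $\theta_0$, a martingale CLT for $n^{-1/2}S_n(\theta_0)$, and a uniform strong law plus continuity for the normalized Hessian at intermediate points $\bar\theta_n$. The paper instead follows the criterion-linearization device of Davis et al.: it defines $\eta_t^{\dagger}(\theta)=\eta_t(\theta_0)+(\theta-\theta_0)^T\dot\eta_t$, shows that the reparametrized linearized criterion $R_n^{\dagger}(u)=l^{\dagger}(\theta_0)-l^{\dagger}(\theta_0+un^{-1/2})$ converges in distribution to the quadratic $-u^TV+\tfrac12 u^T\Omega u$ with $V\sim N(0,\Omega)$, proves $R_n^{\dagger}(u)-R_n(u)\to 0$ uniformly on compacts in $u$, and concludes via convergence of argmins. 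The martingale CLT step (score is a martingale with predictable variation $\sum_t B'(\eta_t)\dot\eta_t\dot\eta_t^T$, Lindeberg via truncation and the ergodic theorem under (A7)) is essentially identical in both. What the two approaches buy: the paper's localization to points $\theta_0+un^{-1/2}$ means all remainder terms need only be controlled along $O(n^{-1/2})$-neighbourhoods with $u$ ranging over a compact set, whereas your expansion requires uniform control of $\ddot\eta_t(\theta)$ and an integrable envelope over a \emph{fixed} neighbourhood of $\theta_0$ (since $\bar\theta_n$ is only known to converge a.s.); in exchange you avoid the extra argmin-continuity step that the paper invokes somewhat informally. Both arguments gloss over the integrability of the second-derivative terms in comparable ways (the paper appeals to unstated ``smoothness assumptions''). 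One caveat on your step (iii): (A5) is a global identifiability condition on $X_t(\theta)$, and it does not immediately yield that $c^T\dot\eta_t=0$ a.s.\ forces $c=0$ (that is a local, derivative-level nondegeneracy statement); however, the paper's proof also tacitly assumes $\Omega$ is invertible without argument, so this is not a gap relative to the paper.
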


We remark that in practice, the population quantities in $\Omega$ can be replaced by their estimated counterparts. Examples of such substitution will be illustrated below in specific models.

\section{Examples}
\subsection{Linear dynamic models}
The conditional mean process $\{X_t\}$ in these models has GARCH-like dynamics. Specifically they are described as
\begin{eqnarray}
Y_t|\mathcal{F}_{t-1}\sim p(y|\eta_t),~~~X_t=\delta+\alpha X_{t-1}+\beta Y_{t-1}, \label{eq:LinearModel}
\end{eqnarray}
where $X_t=B(\eta_t)=\E(Y_t|\mathcal{F}_{t-1})$, and $\delta>0, \alpha, \beta\ge 0$ are parameters. Observe that model (\ref{eq:LinearModel}) is a special case of model (\ref{eq:expmodel}) by defining the function $g_{\theta}$ as
\begin{eqnarray}
g_{\theta}(x,y)=\delta+\alpha x+\beta y, \label{eq:LinearG}
\end{eqnarray}
with $\theta=(\delta,\alpha,\beta)^T$ and the contraction condition (\ref{ContractionFunction}) corresponds to $\alpha+\beta<1$. Note that by recursion we have, for all $t$,
\begin{eqnarray}
X_t(\theta)=\delta/(1-\alpha)+\beta\displaystyle\sum_{k=0}^{\infty}\alpha^k Y_{t-1-k}. \label{eq:InfinitePastRep}
\end{eqnarray}
It follows that $X_t(\theta)\ge x^{\ast}=\delta/(1-\alpha)$ since $Y_t$ only takes non-negative values. A direct application of Propositions \ref{modelgmc}, \ref{discreteergodicity} and \ref{ContinuousErgodicity} gives the stability properties of model (\ref{eq:LinearModel}).
\begin{prop}
\label{LinearStability}
Assume model (\ref{eq:LinearModel}) with $\alpha+\beta<1$. Then the process $\{X_t, t\ge 1\}$ has a unique stationary distribution $\pi$, and $\{(X_t, Y_t), t\ge 1\}$ is ergodic if $X_1\sim \pi$. 
\end{prop}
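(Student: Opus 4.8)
The plan is to recognize model (\ref{eq:LinearModel}) as the instance of the general model (\ref{eq:expmodel}) obtained by taking $g_{\theta}(x,y)=\delta+\alpha x+\beta y$ with $\theta=(\delta,\alpha,\beta)^T$, and then merely to check that this $g_{\theta}$ satisfies the hypotheses of Propositions \ref{modelgmc}, \ref{discreteergodicity} and \ref{ContinuousErgodicity}, after which every assertion in the statement follows by direct invocation.

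First I would verify the contraction condition (\ref{ContractionFunction}). Since $g_{\theta}$ is affine, $|g_{\theta}(x,y)-g_{\theta}(x',y')|=|\alpha(x-x')+\beta(y-y')|\le \alpha|x-x'|+\beta|y-y'|$, so (\ref{ContractionFunction}) holds with $a=\alpha$ and $b=\beta$; because $\alpha,\beta\ge 0$, the requirement $a+b<1$ is exactly the standing hypothesis $\alpha+\beta<1$. The implied bound (\ref{eq:BoundOfG}) then holds with $g_{\theta}(0,0)=\delta$. With the contraction condition in hand, Proposition \ref{modelgmc}(2) immediately gives that $\{X_t,t\ge 1\}$ is geometric moment contracting with a unique stationary distribution $\pi$ and $\E_{\pi}X_1<\infty$, which settles the first claim.

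For ergodicity of $\{(X_t,Y_t)\}$ when $X_1\sim\pi$ I would split on the nature of the conditional law of $Y_t$. If the support of $Y_t$ lies in $\mathbb{N}_0$, Proposition \ref{discreteergodicity} applies directly and yields, via its part (a) and the stationarity already furnished by Proposition \ref{modelgmc}, that $\{(X_t,Y_t)\}$ is ergodic (indeed $\{Y_t\}$ is absolutely regular with $\beta(n)\le(\alpha+\beta)^n/(1-(\alpha+\beta))$). If instead $Y_t$ has a continuous distribution on $[0,\infty)$, I must additionally check that $g_{\theta}$ is increasing and continuous in $(x,y)$; but $g_{\theta}(x,y)=\delta+\alpha x+\beta y$ with $\alpha,\beta\ge 0$ is plainly continuous and non-decreasing in each coordinate, so Proposition \ref{ContinuousErgodicity} applies and delivers geometric ergodicity of $\{X_t\}$ together with stationarity and ergodicity of $\{(X_t,Y_t)\}$.

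There is essentially no obstacle here: the content lies entirely in the three propositions already established, and the proof is a matter of hypothesis-checking. The only points worth flagging are (i) that the monotonicity hypothesis needed in the continuous case is met precisely because $\alpha$ and $\beta$ are assumed non-negative, and (ii) that the abstract function $g_{\infty}$ appearing in part (a) of Propositions \ref{discreteergodicity}--\ref{ContinuousErgodicity} is here available in closed form, namely $X_t=\delta/(1-\alpha)+\beta\sum_{k=0}^{\infty}\alpha^k Y_{t-1-k}$ as in (\ref{eq:InfinitePastRep}), which simultaneously confirms convergence of the defining series and the lower bound $X_t\ge \delta/(1-\alpha)$.
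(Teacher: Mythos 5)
Your proposal is correct and is essentially the paper's own argument: the authors likewise observe that $g_{\theta}(x,y)=\delta+\alpha x+\beta y$ satisfies the contraction condition (\ref{ContractionFunction}) with $a=\alpha$, $b=\beta$, note the representation (\ref{eq:InfinitePastRep}), and then cite Propositions \ref{modelgmc}, \ref{discreteergodicity} and \ref{ContinuousErgodicity} directly. Your explicit check of the monotonicity and continuity hypotheses in the continuous case is a welcome (if minor) addition the paper leaves implicit.
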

\medskip

If $\theta_0=(\delta_0, \alpha_0, \beta_0)^T$ denotes the true parameter vector, then the log-likelihood function $l(\theta)$ and the score function $S_n(\theta)$ of model (\ref{eq:LinearModel}) are given by (\ref{eq:loglikeexp}) and (\ref{eq:scoreexp}) respectively, where $\partial \eta_t(\theta)/\partial \theta=(\partial \eta_t/\partial \delta, \partial \eta_t/\partial \alpha, \partial \eta_t/\partial \beta)^T$ is determined recursively by
\begin{eqnarray}
\frac{\partial \eta_t}{\partial \theta}=\begin{pmatrix}
                                                   1 \\
                                                   B(\eta_{t-1})\\
                                                   Y_{t-1}
                                                  \end{pmatrix}/B'(\eta_t)+\alpha\frac{B'(\eta_{t-1})}{B'(\eta_t)}\frac{\partial \eta_{t-1}}{\partial \theta}.
\end{eqnarray}
The maximum likelihood estimator $\hat{\theta}_n$ is a solution of the equation $S_n(\theta)=0$. Furthermore, the Hessian matrix can be found by taking derivatives of the score function, i.e.,
\begin{eqnarray*}
H_n(\theta)=\frac{\partial^2 l(\theta)}{\partial \theta\partial \theta^T}=\sum_{t=1}^n [-B'(\eta_t(\theta))\frac{\partial\eta_t(\theta)}{\partial\theta}\frac{\partial \eta_t(\theta)}{\partial\theta^T}+\{Y_t-B(\eta_t(\theta))\}\frac{\partial^2\eta_t(\theta)}{\partial\theta\partial \theta^T}],
\end{eqnarray*}
where 
\small
\begin{eqnarray*}
\frac{\partial^2\eta_t}{\partial\theta\partial\theta^T}&=&\biggr(\frac{B''(\eta_t)}{(B'(\eta_t))^2}\frac{\partial\eta_t}{\partial \theta}~~~\frac{B'(\eta_{t-1})B'(\eta_t)}{(B'(\eta_t))^2}\frac{\partial\eta_{t-1}}{\partial \theta}-\frac{B'(\eta_{t-1})B''(\eta_t)}{(B'(\eta_t))^2}\frac{\partial\eta_{t}}{\partial \theta}\\
&&\frac{-Y_{t-1}B''(\eta_t)}{(B'(\eta_t))^2}\frac{\partial\eta_t}{\partial\theta}\biggr)+(0~~~1~~~0)^T\frac{B'(\eta_{t-1})}{B'(\eta_t)}\frac{\partial\eta_{t-1}}{\partial\theta^T}+\alpha\frac{B''(\eta_{t-1})B'(\eta_t)}{(B'(\eta_t))^2}\\
&&\frac{\partial\eta_{t-1}}{\partial\theta}\frac{\partial \eta_{t-1}}{\partial\theta^T}-\alpha\frac{B'(\eta_{t-1})B''(\eta_t)}{(B'(\eta_t))^2}\frac{\partial\eta_t}{\partial\theta}\frac{\partial \eta_t}{\partial\theta^T}+\alpha\frac{B'(\eta_{t-1})}{B'(\eta_t)}\frac{\partial^2\eta_{t-1}}{\partial\theta \partial\theta^T}. 
\end{eqnarray*}
\normalsize
It follows from the representation with the infinite past (\ref{eq:InfinitePastRep}) that assumptions (A1)-(A3) and (A6) are satisfied. In order to apply Theorem \ref{AsympNormal} when investigating the asymptotic behavior of the MLE, we need to impose the following regularity conditions:
\begin{enumerate}
\item[(L0)] The true parameter vector $\theta_0$ lies in a compact neighborhood $\Theta\in \mathbb{R}_+^3$ of $\theta_0$, where $\Theta=\{\theta=(\delta, \alpha, \beta)^T\in \mathbb{R}_+^3: 0<\delta_L\le \delta\le \delta_U, \epsilon\le \alpha+\beta\le 1-\epsilon\}$ for some $\epsilon>0$.
\item[(L1)] $\E\{Y_1\sup_{\theta\in \Theta}B^{-1}(\delta/(1-\alpha)+\beta\sum_{k=0}^{\infty}\alpha^k Y_{-k})\}<\infty$.
\item[(L2)] $\E\{B'(\eta_1(\theta_0))(\partial \eta_1(\theta)/\partial \theta_i)^2|_{\theta=\theta_0}\}<\infty$, for $i=1,2,3$.
\end{enumerate}
\begin{thm}
\label{LinearAsymp}
Assume model (\ref{eq:LinearModel}) and that assumptions (L0)-(L2) hold. Then the maximum likelihood estimator $\hat{\theta}_n$ is strongly consistent and asymptotically normal, i.e.,
\begin{eqnarray*}
\sqrt{n}(\hat{\theta}_n-\theta_0)\stackrel{\mathcal{L}}{\longrightarrow}N(0, \Omega^{-1}),~~~ \mbox{as}~~n\rightarrow\infty,
\end{eqnarray*}
where $\Omega=\E\{B'(\eta_t)\dot{\eta}_t\dot{\eta}_t^T\}$, where $\eta_t=\eta_t(\theta_0)$ and $\dot{\eta_t}=\frac{\partial \eta_t}{\partial \theta}|_{\theta=\theta_0}$.
\end{thm}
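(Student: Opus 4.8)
The plan is to verify that model (\ref{eq:LinearModel}) satisfies all the hypotheses of Theorems \ref{Consistency} and \ref{AsympNormal}, so that both conclusions follow immediately. Assumption (L0) is a restatement of (A0) together with compactness of $\Theta$, and since $\alpha+\beta\le 1-\epsilon<1$ on $\Theta$, the contraction condition (\ref{ContractionFunction}) holds uniformly with $a+b\le 1-\epsilon$. The infinite-past representation (\ref{eq:InfinitePastRep}), namely $g_\infty^\theta(Y_{t-1},Y_{t-2},\ldots)=\delta/(1-\alpha)+\beta\sum_{k\ge 0}\alpha^k Y_{t-1-k}$, does most of the work: it shows $g_\infty^\theta\ge \delta_L/(1-\alpha)\ge \delta_L/(1)=:x^\ast$ (taking $x^\ast=\delta_L/(1-(1-\epsilon))$ more carefully so that $x^\ast\in\mathcal R(B)$, using that $\delta/(1-\alpha)$ is bounded below on $\Theta$), which gives (A1); it is manifestly continuous and indeed twice continuously differentiable in $\theta=(\delta,\alpha,\beta)$ for each fixed summable sequence $\yy$, giving (A2) and (A6); and for the continuous-support case $g(x,y)=\delta+\alpha x+\beta y$ is clearly increasing in $(x,y)$, giving (A3). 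Assumption (A4) is exactly (L1), and (A7) is exactly (L2).

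The one assumption requiring a genuine argument is the identifiability condition (A5): if $X_t(\theta)=X_t(\theta_0)$ $P_{\theta_0}$-a.s. for some $t\ge 1$, then $\theta=\theta_0$. The key step here is to use the representation (\ref{eq:InfinitePastRep}) and exploit the fact that the coefficients $\beta\alpha^k$ of $Y_{t-1-k}$ are free parameters. Since under $P_{\theta_0}$ the process $\{Y_s\}$ is nondegenerate (its conditional variance $B'(\eta_s)>0$, so in particular $Y_{t-1},Y_{t-2},\ldots$ are not a.s. constant and have nontrivial joint support), the almost-sure identity
\begin{eqnarray*}
\frac{\delta}{1-\alpha}+\beta\sum_{k=0}^{\infty}\alpha^k Y_{t-1-k}=\frac{\delta_0}{1-\alpha_0}+\beta_0\sum_{k=0}^{\infty}\alpha_0^k Y_{t-1-k}
\end{eqnarray*}
forces equality of the constant terms and of each coefficient: $\delta/(1-\alpha)=\delta_0/(1-\alpha_0)$ and $\beta\alpha^k=\beta_0\alpha_0^k$ for all $k\ge 0$. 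From $\beta=\beta_0$ (the $k=0$ term) and then $\beta\alpha=\beta_0\alpha_0$ we get $\alpha=\alpha_0$ (provided $\beta_0>0$; the degenerate case $\beta_0=0$ has to be handled by noting $\alpha_0$ is then not identified from the mean alone but the constant equation still pins down $\delta$, or one simply excludes it), and finally $\delta=\delta_0$ from the constant term. Making the "equality of coefficients" step rigorous is the main obstacle: one must argue that a countable family of distinct shifts of a nondegenerate stationary sequence cannot satisfy a nontrivial linear relation a.s., which can be done by a covariance/projection argument or by conditioning successively on $Y_{t-1}, Y_{t-2},\ldots$ and using nondegeneracy at each step.

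With (A0)--(A7) all verified, Theorem \ref{Consistency} yields $\hat\theta_n\asconv\theta_0$ and Theorem \ref{AsympNormal} yields $\sqrt n(\hat\theta_n-\theta_0)\indist N(0,\Omega^{-1})$ with $\Omega=\E\{B'(\eta_t)\dot\eta_t\dot\eta_t^T\}$, exactly as stated; the explicit recursions for $\partial\eta_t/\partial\theta$ and $\partial^2\eta_t/\partial\theta\partial\theta^T$ given above show these derivatives are well-defined and can be computed in practice, and one checks that $\Omega$ is finite (from (L2)) and nonsingular (otherwise some linear combination $c^T\dot\eta_t=0$ a.s., which via the recursion and nondegeneracy of $\{Y_t\}$ again forces $c=0$, essentially the same argument as for (A5)). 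I would present the verification of (A1)--(A3), (A6) in a single short paragraph, devote the bulk of the proof to (A5) and the nonsingularity of $\Omega$, and close by invoking the two general theorems.
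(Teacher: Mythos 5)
Your proposal is correct and follows essentially the same route as the paper: the verification of (A1)--(A3) and (A6) from the infinite-past representation (\ref{eq:InfinitePastRep}) together with the identifications (A4)=(L1), (A7)=(L2) is exactly what the paper does in the text preceding the theorem, and your second suggested device for (A5) --- successively conditioning and using $\var(Y_{t-1}|\mathcal{F}_{t-2})=B'(\eta_{t-1})>0$ to force the coefficients of $Y_{t-1}, Y_{t-2},\ldots$ to agree --- is precisely the paper's appendix argument. The caveat you raise about the degenerate case $\beta_0=0$ (and about nonsingularity of $\Omega$) is a real subtlety that the paper's own proof also passes over silently, so it does not distinguish your argument from theirs.
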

\medskip

\begin{remark}
\label{ARMARemark}
Under the contraction condition $\alpha+\beta<1$, $\{Y_t\}$ can be represented as a causal ARMA(1,1) process. To see this, denote $d_t=Y_t-X_t$, then it follows from $\E(d_t|\mathcal{F}_{t-1})=0$ that $\{d_t, t\in \mathbb{Z}\}$ is a martingale difference sequence. Therefore model (\ref{eq:LinearModel}) can be written as 
\begin{eqnarray}
Y_t-(\alpha+\beta)Y_{t-1}=\delta+d_t-\alpha d_{t-1}. \label{eq:ARMArepresentation}
\end{eqnarray}
Denote $\gamma_Y(h)$ as the auto-covariance function of $\{Y_t\}$. If $\gamma_Y(0)<\infty$, then $\gamma_Y(h)=(\alpha+\beta)^{h-1}\gamma_Y(1)$, for $h\ge 1$, see for example \cite{TSTM}.
\end{remark}

In practice, it can be difficult to verify assumptions (L1) and (L2), so we provide some alternative sufficient conditions for them in the following two remarks. 

\begin{remark}
\label{L1Remark}
A sufficient condition for assumption (L1) is 
\begin{eqnarray*}
\E\{Y_1B^{-1}(\delta_U/\epsilon+\displaystyle\sum_{k=1}^{\infty}(1-\epsilon)^k Y_{1-k})\}<\infty,
\end{eqnarray*}
provided that $\delta_U/\epsilon+\sum_{k=1}^{\infty}(1-\epsilon)^k Y_{1-k}$ is in the range of $B(\eta)$. This can be seen by noting that $X_1(\theta)\le \delta_U/\epsilon+\sum_{k=1}^{\infty}(1-\epsilon)^k Y_{1-k}$.
\end{remark}

\begin{remark}
\label{L2Remark}
If $A''(\eta_t)\ge \underline{c}$ for some $\underline{c}>0$, this is true, for example, when $A''(\eta)$ is increasing and $A''(B^{-1}(\delta_L))>0$, then a sufficient condition for assumption (L2) is $\gamma_Y(0)<\infty$.
\end{remark}
\medskip

Next we consider some specific models belonging to class (\ref{eq:LinearModel}), most of which are geared towards modeling time series of counts.

\begin{example}
\label{PoissonIngarchExample}
\noindent As a special case of the linear dynamic model (\ref{eq:LinearModel}) with $\eta_t=\log\lambda_t$ and $A(\eta_t)=e^{\eta_t}$, the Poisson INGARCH$(1, 1)$ model is given by
\begin{eqnarray}
Y_t|\mathcal{F}_{t-1} \sim \mbox{Pois}(\lambda_t),~~\lambda_t=\delta+\alpha\lambda_{t-1}+\beta Y_{t-1}, \label{eq:poisingarch11}
\end{eqnarray}
where $\delta>0, \alpha, \beta\ge 0$ are parameters. According to Proposition \ref{LinearStability}, it is easy to see that if $\alpha+\beta<1$, then $\{\lambda_t\}$ is geometric moment contracting and has a unique stationary distribution $\pi$; moreover if $\lambda_1\sim \pi$, then $\{(Y_t, \lambda_t), t\ge 1\}$ is an ergodic stationary process. As for inference, the MLE $\hat{\theta}_n$ is strongly consistent and asymptotically normal according to Theorem \ref{LinearAsymp}, i.e., $\sqrt{n}(\hat{\theta}_n-\theta_0)\stackrel{\mathcal{L}}{\longrightarrow}N(0, \Omega^{-1})$, as $n\rightarrow\infty$, where $\Omega=\E\{1/\lambda_t(\partial\lambda_t/\partial\theta)(\partial\lambda_t/\partial\theta)^T\}$. To see this, we only need to verify assumptions (L1) and (L2). Note that by \cite{Fokianos}, we have $\gamma_Y(0)=\{1-(\alpha+\beta)^2+\beta^2\}/\{1-(\alpha+\beta)^2\}$ and $\gamma_Y(h)=\mu C(\theta)(\alpha+\beta)^{h-1}$ for $h\ge 1$, where $\mu=\E Y_t=\delta/(1-\alpha-\beta)$ and $C(\theta)$ is a positive constant dependent on $\theta$. Hence by monotone convergence theorem, we have
\begin{eqnarray*}
\E[Y_1\log\{\delta_U/\epsilon+\displaystyle\sum_{k=1}^{\infty}(1-\epsilon)^k Y_{1-k}\}]&\le&\E[Y_1\{\delta_U/\epsilon+\displaystyle\sum_{k=1}^{\infty}(1-\epsilon)^k Y_{1-k}\}]\\
                  &=&\frac{\delta_U}{\epsilon} \E Y_1+\displaystyle\sum_{k=1}^{\infty}(1-\epsilon)^k\E Y_1Y_{1-k}\\
                  &=&\mu\frac{\delta_U}{\epsilon}+\displaystyle\sum_{k=1}^{\infty}(1-\epsilon)^k\{\gamma_Y(k)+\mu^2\}<\infty.
\end{eqnarray*}
Hence assumption (L1) holds according to Remark \ref{L1Remark}. Notice that $B(\eta_t)=\lambda_t\ge \lambda^{\ast}:=\delta/(1-\alpha)$ for all $t$, so $A''(\eta_t)=e^{\eta_t}$ is bounded away from 0, so assumption (L2) holds according to Remark \ref{L2Remark}.
\end{example}
\medskip

Moreover, the iterated random function approach can be used to study the properties of INGARCH models with higher orders. A Poisson INGARCH($p,q$) model takes the form
\begin{eqnarray}
Y_t|\mathcal{F}_{t-1}\sim \mbox{Pois}(\lambda_t),~~\lambda_t=\delta+\displaystyle\sum_{i=1}^p \alpha_i\lambda_{t-i}+\sum_{j=1}^q \beta_j Y_{t-j}, \label{eq:PoisIngarchpq}
\end{eqnarray}
where $\delta>0, \alpha_i, \beta_j\ge 0, i=1,\ldots, p$; $j=1,\ldots, q$. Applying similar ideas as in the INGARCH($1,1$) case, we have the following stationarity result.

\begin{prop}
\label{poissonpq}
Consider the INGARCH$(p,q)$ model (\ref{eq:PoisIngarchpq}) and suppose $\sum_{i=1}^p \alpha_i + \sum_{j=1}^q \beta_j<1$, then $\{\lambda_t\}$ is geometric moment contracting and has a unique stationary distribution.
\end{prop}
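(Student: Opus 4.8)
The plan is to lift the higher-order recursion (\ref{eq:PoisIngarchpq}) to a first-order \emph{vector} iterated random function and then re-run, essentially verbatim, the argument behind Proposition~\ref{modelgmc}; the only genuinely new ingredient is a spectral-radius estimate that produces the right contraction norm. Concretely, with $m=\max(p,q)$ I stack
\begin{eqnarray*}
\mathbf{X}_t=(\lambda_t,\lambda_{t-1},\ldots,\lambda_{t-p+1},Y_{t-1},Y_{t-2},\ldots,Y_{t-q+1})^T\in[0,\infty)^{p+q-1}.
\end{eqnarray*}
Reading off (\ref{eq:PoisIngarchpq}), one checks $\mathbf{X}_t=\mathbf{c}+M\mathbf{X}_{t-1}+\mathbf{b}\,Y_{t-1}$ for a vector $\mathbf{c}\ge 0$ and nonnegative $M,\mathbf{b}$ assembled from $\delta,(\alpha_i),(\beta_j)$ (the first row of $M$ carries $\alpha_1,\ldots,\alpha_p,\beta_2,\ldots,\beta_q$; the remaining rows merely shift/copy coordinates; $\mathbf{b}$ puts $\beta_1$ in the $\lambda$-slot and $1$ in the new $Y$-slot). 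Since $Y_{t-1}\mid\mathcal{F}_{t-2}\sim\mathrm{Pois}(\lambda_{t-1})$ and $\lambda_{t-1}=(\mathbf{X}_{t-1})_1$, this fits the IRF framework of Section~2.3 with random map $f_u(\mathbf{x})=\mathbf{c}+M\mathbf{x}+\mathbf{b}\,F^{-1}_{(\mathbf{x})_1}(u)$, $u\in(0,1)$, $F_\lambda$ the $\mathrm{Pois}(\lambda)$ c.d.f., and forward/backward processes $\mathbf{X}_t(\mathbf{x}),\mathbf{Z}_t(\mathbf{x})$ formed exactly as there.

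Next comes the contraction step. Couple two copies driven by the same $\{U_t\}$ and set $\mathbf{D}_t=|\mathbf{X}_t(\mathbf{x})-\mathbf{X}_t(\mathbf{x}')|$ coordinatewise. Affinity gives $\mathbf{D}_t\le M\mathbf{D}_{t-1}+\mathbf{b}\,|Y_{t-1}-Y_{t-1}'|$, and since the quantile coupling is $W_1$-optimal on $\mathbb{R}$ with $W_1(\mathrm{Pois}(\lambda),\mathrm{Pois}(\lambda'))=|\lambda-\lambda'|$, conditioning on the past first yields $\E|Y_{t-1}-Y_{t-1}'|=\E|(\mathbf{X}_{t-1}(\mathbf{x}))_1-(\mathbf{X}_{t-1}(\mathbf{x}'))_1|$. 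Hence $\E\mathbf{D}_t\le N\,\E\mathbf{D}_{t-1}$ coordinatewise with $N=M+\mathbf{b}\,\mathbf{e}_1^T\ge 0$ fixed. A direct computation of $\det(zI-N)$ shows its nonzero eigenvalues are the roots of $z^m-\sum_i\alpha_i z^{m-i}-\sum_j\beta_j z^{m-j}$; all coefficients are nonnegative with sum $\sum_i\alpha_i+\sum_j\beta_j<1$, so this polynomial has no zero in $\{|z|\ge 1\}$ (equivalently, one exhibits a positive $\mathbf{w}$ with $N\mathbf{w}\le r\mathbf{w}$, $r<1$, built from geometric weights, as in the GARCH$(p,q)$ literature), whence $\rho(N)<1$. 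Picking $r\in(\rho(N),1)$ and a strictly positive left sub-invariant vector $\mathbf{v}$ with $N^T\mathbf{v}\le r\mathbf{v}$ (e.g. $\mathbf{v}=(I-r^{-1}N^T)^{-1}\mathbf{1}$) and writing $\|\mathbf{z}\|_{\mathbf{v}}=\mathbf{v}^T|\mathbf{z}|$, we get $\E\|\mathbf{X}_t(\mathbf{x})-\mathbf{X}_t(\mathbf{x}')\|_{\mathbf{v}}=\mathbf{v}^T\E\mathbf{D}_t\le r\,\mathbf{v}^T\E\mathbf{D}_{t-1}\le\cdots\le r^t\|\mathbf{x}-\mathbf{x}'\|_{\mathbf{v}}$, i.e. $\{\mathbf{X}_t\}$ is geometric moment contracting with exponent $\alpha=1$.

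The rest is bookkeeping and follows the proof of Proposition~\ref{modelgmc} in the vector setting: $\E\|\mathbf{Z}_{t+1}(\mathbf{x})-\mathbf{Z}_t(\mathbf{x})\|_{\mathbf{v}}\le r^t\,\E\|f_{U_{t+1}}(\mathbf{x})-\mathbf{x}\|_{\mathbf{v}}$ is summable because $\E f_{U_1}(\mathbf{x})$ is finite (the map is affine and $\E Y=(\mathbf{x})_1$ for a Poisson variable), so $\mathbf{Z}_t(\mathbf{x})$ converges a.s. to a limit independent of $\mathbf{x}$ whose law $\tilde\pi$ is the unique stationary distribution of $\{\mathbf{X}_t\}$, with finite mean obtained from $\E\lambda_t=\delta+\sum_i\alpha_i\E\lambda_{t-i}+\sum_j\beta_j\E Y_{t-j}$ and $\E Y_t=\E\lambda_t$ (giving stationary mean $\delta/(1-\sum_i\alpha_i-\sum_j\beta_j)$). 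Finally $\{\lambda_t\}$ is the first coordinate of $\{\mathbf{X}_t\}$, so $|\lambda_t(\mathbf{x})-\lambda_t(\mathbf{x}')|\le\|\mathbf{X}_t(\mathbf{x})-\mathbf{X}_t(\mathbf{x}')\|_{\mathbf{v}}/(\mathbf{v})_1$ shows it is GMC, and its pushed-forward stationary law is unique. The main obstacle is the contraction step: justifying $\E|F^{-1}_\lambda(U)-F^{-1}_{\lambda'}(U)|=|\lambda-\lambda'|$ for the Poisson family (this is exactly what lets the INGARCH$(1,1)$ case be read off the general theory, and can be quoted from there) and, above all, establishing $\rho(N)<1$ for the companion-type matrix $N$, i.e. finding the weighted norm in which the composite map contracts — the only place the order $(p,q)$ genuinely enters.
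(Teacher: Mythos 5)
Your proposal is correct and rests on the same two ideas as the paper's proof: augment the state to a first-order vector Markov chain driven by a single new uniform per step, and use the quantile-coupling identity $\E|F^{-1}_{\lambda}(U)-F^{-1}_{\lambda'}(U)|=|\lambda-\lambda'|$ (Proposition \ref{SameTheta}) to reduce the one-step expected displacement to a linear recursion, which is then contracted in a weighted $\ell_1$ metric. The difference is in how the weights are produced. The paper treats only the representative cases $(p,q)=(2,1)$ and $(2,2)$, chooses the weights by hand, and reduces the contraction requirement to the quadratic $r^2-(\alpha_1+\beta_1)r-(\alpha_2+\beta_2)=0$ having a root in $(0,1)$, leaving general orders to ``a simple induction.'' You instead assemble the full companion-type matrix $N=M+\mathbf{b}\,\mathbf{e}_1^T$, observe that its nonzero eigenvalues solve $z^m=\sum_i\alpha_i z^{m-i}+\sum_j\beta_j z^{m-j}$ so that $\sum_i\alpha_i+\sum_j\beta_j<1$ forces $\rho(N)<1$, and manufacture the weight vector systematically as $\mathbf{v}=(I-r^{-1}N^T)^{-1}\mathbf{1}$, which is entrywise positive and satisfies $N^T\mathbf{v}\le r\mathbf{v}$. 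This handles all $(p,q)$ uniformly and makes explicit the quantity (the spectral radius of the companion matrix) that the paper's quadratic-root condition computes in the low-order cases; your indexing of the state vector also keeps the single-innovation IRF structure intact, which is the point the paper flags when it moves from $q=1$ to $q>1$. The remaining steps (summability of $\E\|\mathbf{Z}_{t+1}(\mathbf{x})-\mathbf{Z}_t(\mathbf{x})\|_{\mathbf{v}}$, a.s.\ convergence of the backward iterates, uniqueness of the stationary law, and projection onto the first coordinate) match the template of Proposition \ref{modelgmc} and are carried out correctly.
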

\medskip

\begin{example}
\label{NbIngarchExample}
The negative binomial INGARCH$(1, 1)$ model (NB-INGARCH) is defined as
\begin{eqnarray}
Y_t|\mathcal{F}_{t-1}\sim \mbox{NB}(r,p_t), ~~X_t=\delta+\alpha X_{t-1}+\beta Y_{t-1}, \label{eq:nbingarch11}
\end{eqnarray}
where $X_t=r(1-p_t)/p_t$, $\delta>0,\alpha,\beta\ge 0$ are parameters and the notation $Y\sim\mbox{NB}(r, p)$ represents the negative binomial distribution with probability mass function given by
\begin{eqnarray*}
P(Y=k)={k+r-1 \choose r-1}(1-p)^k p^r, ~~~~~~ k=0,1,2,\ldots.
\end{eqnarray*}
When $r=1$, the conditional distribution of $Y_t$ becomes geometric distribution with probability of success $p_t$, in which case (\ref{eq:nbingarch11}) reduces to a geometric INGARCH model.

By virtue of Proposition \ref{LinearStability}, if $\alpha+\beta<1$, then $\{X_t, t\ge 1\}$ is a geometric moment contracting Markov chain, and has a unique stationary distribution $\pi$; and when $X_1\sim \pi$, $\{(X_t, Y_t), t\ge 1\}$ is ergodic. As for inference, we can first estimate $\theta=(\delta, \alpha, \beta)^T$ for $r$ fixed and calculate the profile likelihood as a function of $r$. Then $r$ is estimated by choosing the one which maximizes the profile likelihood, and thus $\hat{\theta}$ can be otained correspondingly. Moreover, if we assume $r$ is known and $(\alpha+\beta)^2+\beta^2/r<1$, then under assumption (L0), the maximum likelihood estimator $\hat{\theta}_n$ is strongly consistent and asymptotically normal with mean $\theta_0$ and covariance matrix $\Omega^{-1}/n$, where $\Omega=\E\{r/X_t/(X_t+r)(\partial X_t/\partial \theta)(\partial X_t/\partial\theta)^T\}$. Verification of assumptions (L1) and (L2) is sufficient to demonstrate the result. Since $B^{-1}(x)=\log\{x/(x+r)\}<0$, so assumption (L1) holds according to Remark \ref{L1Remark}. Note that $A''(\eta_t)=re^{\eta_t}/(1-e^{\eta_t})^2$ is increasing, so assumption (L2) holds provided $\gamma_Y(0)<\infty$ according to Remark \ref{L2Remark}. Because $\var(X_1)=\alpha^2\var(X_0)+\beta^2\var(Y_0)+2\alpha\beta\cov(X_0, Y_0)$, where 
\begin{eqnarray*}
\var(Y_0)&=&\E\{\var(Y_0|X_0)\}+\var\{\E(Y_0|X_0)\}\\
             &=&\E\{r(1-p_0)/p_0^2\}+\var(X_0)=\mu+1/r\E X_0^2+\var(X_0),
\end{eqnarray*}
and $\cov(X_1, Y_1)=\E Y_1X_1-\mu^2=\E X_1^2-\mu^2=\var(X_1)$, it follows from the stationarity that 
\begin{eqnarray*}
\var(X_0)=\frac{\beta^2\mu(1+\mu/r)}{1-(\alpha+\beta)^2-\beta^2/r}.
\end{eqnarray*}
Hence $\gamma_Y(0)<\infty$ provided $(\alpha+\beta)^2+\beta^2/r<1$.
\end{example}
\medskip

\begin{example}
We define the binomial INGARCH$(1, 1)$ model as
\begin{eqnarray}
Y_t|\mathcal{F}_{t-1}\sim \mbox{B}(m, p_t),~~mp_t=\delta+\alpha mp_{t-1}+\beta Y_{t-1}, \label{eq:BinomIngarch11}
\end{eqnarray}
where $\delta>0, \alpha, \beta\ge 0$ are parameters and $\delta+\alpha m+\beta m\le m$ since $p_t\in (0, 1)$. This implies the contraction condition $\alpha+\beta<1$. In particular, when $m=1$, it models time series of binary data, and is called a Bernoulli INGARCH model. If $\delta+\alpha m+\beta m\le m$, then $\{X_t=mp_t, t\ge 1\}$ is geometric moment contracting and has a unique stationary distribution $\pi$; furthermore, $\{(X_t, Y_t), t\ge 1\}$ is ergodic when $X_1\sim \pi$. 

We now consider the inference of the model. Firstly, because of the special constraint $p_t\in (0, 1)$, the parameter space becomes 
\begin{eqnarray*}
\Theta=\{(\delta,\alpha,\beta)^T: 0<\delta_L\le \delta\le \delta_U, \epsilon\le \alpha+\beta\le 1-\epsilon\}~~\mbox{for some}~~\epsilon>\delta_U/m.
\end{eqnarray*}
Since $Y_t\le m$, so $X_1(\theta)\le (\delta+\alpha m)/(1-\alpha)$ and $B^{-1}(X_1(\theta))\le \log\{(\delta_U+(1-\epsilon)m)/(\epsilon m-\delta_U)\}$. Hence assumption (L1) holds. Notice that $A''(\eta_t)=mp_t(1-p_t)$ and $p_t\in[\delta_U/m, (\delta+\beta m)/(m(1-\alpha))]\subsetneq [0, 1]$, so $A''(\eta_t)$ is bounded away from 0. Similar to the proof in Example \ref{NbIngarchExample}, one can show that $\gamma_Y(0)<\infty$ provided that $(\alpha+\beta)^2+\beta^2/m<1$. So assuming $m$ is known and $(\alpha+\beta)^2+\beta^2/m<1$, the maximum likelihood estimator $\hat{\theta}_n$ is strongly consistent and asymptotically normal with mean $\theta_0$ and covariance matrix $\Omega^{-1}/n$, where $\Omega=\E\{m/X_t/(m-X_t)(\partial X_t/\partial\theta)(\partial X_t/\partial\theta)^T\}$. 
\end{example}

\begin{example}
The gamma INGARCH model, which has a continuous response, is given by
\begin{eqnarray}
Y_t|\mathcal{F}_{t-1}\sim \Gamma(\kappa, s_t),~~s_t=\delta/\kappa+\alpha s_{t-1}+\beta/\kappa Y_{t-1}, \label{eq:GammaIngarch11}
\end{eqnarray}
where $\kappa$ and $s_t$ are the shape and scale parameters of the gamma distribution respectively and $\delta>0,\alpha,\beta\ge 0$ are parameters. Here the natural parameter is $\eta_t=-1/s_t$ and the Markov chain $X_t=B(\eta_t)=-\kappa/\eta_t$. If $\alpha+\beta<1$, then $\{X_t=\kappa s_t, t\ge 1\}$ is geometric moment contracting and has a unique stationary distribution $\pi$; furthermore, $\{(Y_t, X_t), t\ge 1\}$ is an ergodic stationary process if $X_1\sim \pi$.

As for the inference in this model, assume $\kappa$ is known and $(\alpha+\beta)^2+\beta^2/\kappa<1$. Then the maximum likelihood estimator $\hat{\theta}_n$ is strongly consistent and asymptotically normal with mean $\theta_0$ and covariance matrix $\Omega^{-1}/n$ where $\Omega=\E\{\kappa/s_t^2(\partial s_t/\partial \theta)(\partial s_t/\partial\theta)^T\}$. To see this, note that $B^{-1}(x)=-\kappa/x<0$ when $x>0$, which verifies assumption (L1) according to Remark \ref{L1Remark}. Similar to the proof in Example \ref{NbIngarchExample}, one can show that $\gamma_Y(0)=(1/\kappa+1)\gamma_X(0)+\mu^2/\kappa$ and $\gamma_X(0)=(\beta^2\mu^2/\kappa)/\{1-(\alpha+\beta)^2-\beta^2/\kappa\}$. Hence as long as $(\alpha+\beta)^2+\beta^2/\kappa<1$, we have $\gamma_Y(0)<\infty$. Since $A''(\eta_t)=\kappa/\eta_t^2\ge \delta_L^2/\kappa>0$, assumption (L2) holds according to Remark \ref{L2Remark}.
\end{example}

\subsection{Nonlinear dynamic models}
It is possible to generalize (\ref{eq:LinearModel}) to nonlinear dynamic models. One approach is based on the idea of spline basis functions, see for example, \cite{SemiRegression}. In this framework, the model specification is given by
\begin{eqnarray}
Y_t|\mathcal{F}_{t-1}\sim p(y|\eta_t),~~X_t=\delta+\alpha X_{t-1}+\beta Y_{t-1}+\displaystyle\sum_{k=1}^K\beta_k(Y_{t-1}-\xi_k)^+,
\label{eq:NonLinear}
\end{eqnarray}
where $K\in \mathbb{N}_0$, $\delta>0, \alpha,\beta\ge 0, \beta_1,\ldots, \beta_K$ are parameters, $\{\xi_k\}_{k=1}^K$ are the so-called \emph{knots}, and $x^+$ is the positive part of $x$. In particular, when $K=0$, (\ref{eq:NonLinear}) reduces to the linear model (\ref{eq:LinearModel}). It is easy to see that model (\ref{eq:NonLinear}) is a special case of model (\ref{eq:expmodel}) by defining $g_{\theta}(x, y)=\delta+\alpha x+\beta y+\sum_{k=1}^K \beta_k(y-\xi_k)^+,$ where $\theta=(\delta, \alpha, \beta, \beta_1,\ldots, \beta_K)^T$. Note that in each of the pieces segmented by the knots, (\ref{eq:NonLinear}) has INGARCH-like dynamics. For example, if $Y_{t-1}\in [\xi_s, \xi_{s+1})$ for some $s< K$, then $X_t=(\delta-\sum_{k=1}^s \beta_k \xi_k) + \alpha X_{t-1}+ (\beta+\sum_{k=1}^s\beta_k) Y_{t-1}$. This can be viewed as one of the generalizations (e.g., \cite{ThreshGLM})  to the threshold autoregressive model (\cite{Tong90}). According to Propositions \ref{modelgmc}, \ref{discreteergodicity} and \ref{ContinuousErgodicity}, we can establish the stability properties of the model.

\begin{prop}
\label{NonLinearStability}
Consider model (\ref{eq:NonLinear}) with parameters satisfying $\alpha+\beta<1, \beta+\sum_{k=1}^s \beta_k\ge 0$ and $\alpha+\beta+\sum_{k=1}^s \beta_k<1$ for $s=1,\ldots, K$, then $\{X_t\}$ is geometric moment contracting and has a unique stationary distribution $\pi$. Moreover if $X_1\sim \pi$, then $\{(X_t, Y_t), t\ge 1\}$ is ergodic.
\end{prop}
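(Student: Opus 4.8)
The plan is to obtain Proposition~\ref{NonLinearStability} as a direct corollary of Propositions~\ref{modelgmc}, \ref{discreteergodicity} and \ref{ContinuousErgodicity}, so that the only real work is to verify that the update map
\[
g_{\theta}(x,y)=\delta+\alpha x+\beta y+\sum_{k=1}^{K}\beta_k(y-\xi_k)^+
\]
is an admissible choice of $g$ in model~(\ref{eq:expmodel}): namely that it is non-negative on its domain and that it satisfies the contraction condition~(\ref{ContractionFunction}) with constants summing to less than one, under the stated constraints $\alpha+\beta<1$, $\beta+\sum_{k=1}^s\beta_k\ge 0$ and $\alpha+\beta+\sum_{k=1}^s\beta_k<1$ for $s=1,\dots,K$.

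For non-negativity, write $h(y):=\beta y+\sum_{k=1}^{K}\beta_k(y-\xi_k)^+$; this is a continuous piecewise-linear function which, on the interval $[\xi_s,\xi_{s+1})$ (with the convention $\xi_0=0$, $\xi_{K+1}=+\infty$), has slope $\beta+\sum_{k=1}^{s}\beta_k\ge 0$. Hence $h$ is non-decreasing on $[0,\infty)$, so for $x,y\ge 0$ (and assuming, as in practice, that the knots are positive) $g_{\theta}(x,y)\ge g_{\theta}(x,0)=\delta+\alpha x\ge\delta>0$, which gives non-negativity of $g_\theta$ and incidentally confirms the range bound $X_t(\theta)\ge\delta$. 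For the contraction condition, the triangle inequality gives $|g_{\theta}(x,y)-g_{\theta}(x',y')|\le\alpha|x-x'|+|h(y)-h(y')|$. Since all slopes of $h$ are non-negative, $h$ is non-decreasing with Lipschitz constant $b:=\max_{0\le s\le K}\bigl(\beta+\sum_{k=1}^{s}\beta_k\bigr)$, so $|h(y)-h(y')|\le b|y-y'|$. Thus~(\ref{ContractionFunction}) holds with $a=\alpha$ and this $b$, and
\[
a+b=\max_{0\le s\le K}\Bigl(\alpha+\beta+\sum_{k=1}^{s}\beta_k\Bigr)<1,
\]
because the $s=0$ term equals $\alpha+\beta<1$, each $s\ge 1$ term is $<1$ by hypothesis, and the maximum runs over finitely many reals each below one.

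It remains to feed this into the stability propositions. Proposition~\ref{modelgmc} applies as soon as~(\ref{ContractionFunction}) holds, giving that $\{X_t\}$ is geometric moment contracting with a unique stationary distribution $\pi$ and $\E_\pi X_1<\infty$. For ergodicity of $\{(X_t,Y_t)\}$ when $X_1\sim\pi$: if $Y_t$ is integer-valued, Proposition~\ref{discreteergodicity} applies with no further condition; if $Y_t$ has a continuous distribution, one also observes that $g_\theta$ is continuous and non-decreasing in $(x,y)$, which supplies the extra monotonicity hypothesis of Proposition~\ref{ContinuousErgodicity}.

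The only step that is not pure bookkeeping is pinning down the Lipschitz constant of the spline part $h$ and recognising that the assumption $\beta+\sum_{k=1}^s\beta_k\ge 0$ is exactly what forces every piece to be non-decreasing, so that the Lipschitz constant is the largest slope rather than the largest absolute slope, and that combined with $\alpha+\beta+\sum_{k=1}^s\beta_k<1$ this is precisely what is needed to make $a+b<1$. Once that is seen, the proposition reduces to the three stability results already established.
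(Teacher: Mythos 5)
Your proposal is correct and follows the same route as the paper, which derives Proposition~\ref{NonLinearStability} directly from Propositions~\ref{modelgmc}, \ref{discreteergodicity} and \ref{ContinuousErgodicity} without a separate appendix proof. Your verification that the spline map satisfies (\ref{ContractionFunction}) with $a=\alpha$ and $b=\max_{0\le s\le K}(\beta+\sum_{k=1}^{s}\beta_k)$ --- using $\beta+\sum_{k=1}^{s}\beta_k\ge 0$ to make the Lipschitz constant the largest slope and the monotonicity needed for Proposition~\ref{ContinuousErgodicity} --- simply makes explicit the bookkeeping the paper leaves implicit.
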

\medskip

We now consider inference for this model. Assume the knots $\{\xi_k\}_{k=1}^K$ are known for $K$ fixed. Then the parameter vector $\theta=(\delta, \alpha, \beta, \beta_1,\ldots, \beta_K)^T$ can be estimated by maximizing the conditional log-likelihood function, which is available according to (\ref{eq:loglikeexp}). The number of knots $K$ can be selected by virtue of an information criteria, such as AIC and BIC. As for the locations of knots,  there are different strategies one can adopt for choosing them. One method is to place the knots at the $\{j / (K + 1), j = 1, \ldots, K\}$ quantiles of the population, which can be estimated from the data. A second method is to choose the locations that maximize the log likelihood. We will employ both procedures to real datasets in the next section. 

To study the asymptotic behavior of the estimates, first note that by iterating the recursion,
\begin{eqnarray}
X_t&=&\delta/(1-\alpha)+\beta\displaystyle\sum_{i=0}^{\infty}\alpha^i Y_{t-1-i}+\sum_{k=1}^K \beta_k\sum_{i=0}^{\infty}\alpha^i (Y_{t-1-i}-\xi_k)^+ \nonumber\\
      &=&\delta/(1-\alpha)+\displaystyle\sum_{i=0}^{\infty}\alpha^i\{\beta Y_{t-1-i}+\sum_{k=1}^K \beta_k(Y_{t-1-i}-\xi_k)^+\}.
\end{eqnarray}
This defines the function $g_{\infty}^{\theta}$ as in $X_t=g_{\infty}^{\theta}(Y_{t-1}, Y_{t-2},\ldots)$ and also verifies assumptions (A1)-(A3). Hence in order to apply Theorem \ref{LinearAsymp}, we only need to impose the following regularity assumptions for the nonlinear model (\ref{eq:NonLinear}):
\begin{enumerate}
\item[(NL1)] $\theta_0$ is an interior point in the parameter space $\Theta$, which is a compact subset of the parameter set satisfying the conditions in Proposition \ref{NonLinearStability}.
\item[(NL1)] $\E[Y_1\displaystyle\sup_{\theta\in \Theta}B^{-1}((\delta/(1-\alpha)+\sum_{i=0}^{\infty}\alpha^i\{\beta Y_{t-1-i}+\sum_{k=1}^K \beta_k(Y_{t-1-i}-\xi_k)^+\})]<\infty$.
\item[(NL2)] $\E[B'(\eta_1(\theta_0))\{\partial \eta_1(\theta)/\partial \theta_i)\}^2|_{\theta=\theta_0}]<\infty$, for $i=1,\ldots, K+3$.
\end{enumerate}
Sufficient conditions for assumptions (NL1) and (NL2) can be established similarly to those given in Remarks \ref{L1Remark} and \ref{L2Remark}. The asymptotic properties of the MLE are summarized in the following theorem.
\begin{thm}
\label{NonLinearAsympNormal}
For model (\ref{eq:NonLinear}), suppose that the placement of the knots is known, and that assumptions (NL0)-(NL2) hold, then the maximum likelihood estimator $\hat{\theta}_n$ is strongly consistent and asymptotically normal, i.e.,
\begin{eqnarray*}
\sqrt{n}(\hat{\theta}_n-\theta_0)\stackrel{\mathcal{L}}{\longrightarrow}N(0, \Omega^{-1}),~~\mbox{as}~~n\rightarrow\infty,
\end{eqnarray*}
where $\Omega=\E\{B'(\eta_t)\dot{\eta}_t\dot{\eta}_t^T\}$.
\end{thm}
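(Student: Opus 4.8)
The strategy is to verify that model (\ref{eq:NonLinear}), under (NL0)--(NL2), satisfies every regularity condition (A0)--(A7) of Section~3, and then to quote the general Theorems~\ref{Consistency} and~\ref{AsympNormal} for strong consistency and asymptotic normality. All the probabilistic groundwork---strict stationarity, ergodicity, and the existence of the infinite-past representation $X_t=g_\infty^\theta(Y_{t-1},Y_{t-2},\ldots)$---is already delivered by Proposition~\ref{NonLinearStability} together with the explicit iteration
\begin{eqnarray*}
g_\infty^\theta(\yy)=\frac{\delta}{1-\alpha}+\sum_{i=0}^\infty \alpha^i\Bigl\{\beta\, y_{i+1}+\sum_{k=1}^K \beta_k (y_{i+1}-\xi_k)^+\Bigr\}
\end{eqnarray*}
displayed just before the theorem, so the work is entirely in checking (A0)--(A7).

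First I would dispose of the routine conditions. Since every summand above is non-negative, $g_\infty^\theta\ge \delta/(1-\alpha)$, which is bounded below uniformly over the compact $\Theta$ by a positive constant lying in $\mathcal{R}(B)$ because $B$ is strictly increasing onto the relevant mean interval; this gives (A1). The geometric factor $\alpha^i$ with $\alpha$ bounded away from $1$ on $\Theta$, together with the locally Lipschitz dependence of each term on $(\delta,\alpha,\beta,\beta_1,\ldots,\beta_K)$, makes the series and its first two termwise $\theta$-derivatives converge uniformly on $\Theta$; hence $\theta\mapsto g_\infty^\theta(\yy)$ is continuous, yielding (A2), and twice continuously differentiable, yielding (A6). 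Assumption (A3) is vacuous in the count case, and in the continuous-response case holds since $g_\theta(x,y)=\delta+\alpha x+\beta y+\sum_{k=1}^K\beta_k(y-\xi_k)^+$ is increasing in $(x,y)$ under the sign constraints of Proposition~\ref{NonLinearStability}. Finally (A0) is (NL0), (A4) is (NL1), and (A7) is (NL2).

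The one genuinely new point is the identifiability condition (A5). Suppose $X_t(\theta)=X_t(\theta_0)$, $P_{\theta_0}$-a.s., for some $t\ge 1$. By the representation above, $X_t(\theta)-X_t(\theta_0)$ is, on each realization of the past, a finite linear combination of the functions $1$, $y_{i+1}$ and $(y_{i+1}-\xi_k)^+$ at the observed lags. I would exploit that the stationary law of any finite block $(Y_{t-1},\ldots,Y_{t-m})$ assigns positive probability to every point of a fixed support $\mathcal{S}$ that is rich enough: for the standard members of the family (Poisson, negative binomial, binomial with $m$ large, geometric, gamma) $\mathcal{S}$ contains more than $K+2$ points, with the knots in its interior. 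Conditioning on the event that $Y_{t-1-i}=0$ for all $i\ge 1$ and letting $Y_{t-1}$ range over $\mathcal{S}$ forces the piecewise-linear maps $y\mapsto \beta y+\sum_{\xi_k<y}\beta_k(y-\xi_k)$ and the corresponding one built from $\theta_0$ to coincide on $\mathcal{S}$; equating slopes on each segment between consecutive knots gives $\beta+\sum_{k\le s}\beta_k=\beta_0+\sum_{k\le s}\beta_k^0$ for every $s$, hence $\beta=\beta_0$ and $\beta_k=\beta_k^0$ for all $k$, and equating intercepts gives $\delta/(1-\alpha)=\delta_0/(1-\alpha_0)$. Then conditioning on $Y_{t-2}$ varying with all other lags zero and comparing the coefficient of $Y_{t-2}$, namely $\alpha\beta$ against $\alpha_0\beta_0$, yields $\alpha=\alpha_0$---this is the step that uses $\beta=\beta_0>0$, which I would secure by taking $\Theta$ to keep $\beta$ bounded away from $0$---and therefore $\delta=\delta_0$. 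This establishes (A5).

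With (A0)--(A7) in hand, Theorem~\ref{Consistency} gives $\hat{\theta}_n\asconv\theta_0$ and Theorem~\ref{AsympNormal} gives $\sqrt{n}(\hat{\theta}_n-\theta_0)\indist N(0,\Omega^{-1})$ with $\Omega=\E\{B'(\eta_t)\dot{\eta}_t\dot{\eta}_t^T\}$; finiteness of $\Omega$ is (NL2), and its nonsingularity can be read off the same identifiability mechanism, since a null direction $c^T\dot{\eta}_t\equiv 0$ would force, upon integration, a coincidence $X_t(\theta)\equiv X_t(\theta_0)$ along a curve through $\theta_0$, contradicting (A5). The main obstacle is precisely the identifiability step: the recursion entangles the spline coefficients $\beta_k$ with the autoregressive coefficient $\alpha$ across all lags, so disentangling them requires a carefully chosen sequence of conditioning events, and it is here that the richness of the support of $Y_t$ and the separation of $\beta$ from $0$ enter.
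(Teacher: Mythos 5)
Your overall route is the same as the paper's: the explicit series $g_\infty^\theta(\yy)=\delta/(1-\alpha)+\sum_{i\ge 0}\alpha^i\{\beta y_{i+1}+\sum_k\beta_k(y_{i+1}-\xi_k)^+\}$ disposes of (A1)--(A3) and (A6), the conditions (NL0)--(NL2) supply (A0), (A4), (A7), and everything reduces to the identifiability condition (A5), after which Theorems \ref{Consistency} and \ref{AsympNormal} are quoted verbatim. This is exactly how the paper argues (its proof consists only of the (A5) verification), and your identifiability equation
$(\beta-\beta_0)Y_{t-1}+\sum_k(\beta_k-\beta_{k,0})(Y_{t-1}-\xi_k)^+ = \delta_0-\delta+\alpha_0X_{t-1}(\theta_0)-\alpha X_{t-1}(\theta)$
is the same starting point.

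Two remarks on your execution of (A5). First, ``conditioning on the event that $Y_{t-1-i}=0$ for all $i\ge 1$'' is not a legitimate operation: since $X_s\ge\delta/(1-\alpha)>0$ for every $s$, that event is an intersection of infinitely many constraints each with conditional probability bounded away from $1$, hence has probability zero. The step is easily repaired, and the repair is what the paper actually does: the right-hand side of the identity is $\sigma\{Y_{t-2},Y_{t-3},\ldots\}$-measurable, so conditioning on $\mathcal{F}_{t-2}$ the left-hand side, a piecewise-linear function of $Y_{t-1}$, must be a.s.\ constant on the conditional support of $Y_{t-1}$; since $\mathrm{Var}(Y_{t-1}\mid\mathcal{F}_{t-2})>0$ and that support meets each knot-segment in at least two points, all slopes vanish, giving $\beta=\beta_0$ and $\beta_k=\beta_{k,0}$. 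No null-event conditioning is needed. Second, your observation that extracting $\alpha=\alpha_0$ from the coefficient $\beta_0(\alpha_0-\alpha)$ of $Y_{t-2}$ requires $\beta_0\neq 0$ (secured by bounding $\beta$ away from $0$ on $\Theta$) is a point the paper's proof silently glosses over with ``similarly one can show $\alpha=\alpha_0$''; flagging it is a genuine improvement rather than a defect. With the first repair made, your proof is correct and essentially coincides with the paper's.
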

\medskip

We use the Poisson nonlinear dynamic model as an illustrative example of the above results and refer readers to Section 5 for implementation of the estimation procedure. The model is defined as 
\begin{eqnarray}
Y_t|\mathcal{F}_{t-1}\sim \mbox{Pois}(\lambda_t),~~\lambda_t=\delta+\alpha \lambda_{t-1}+\beta Y_{t-1}+\displaystyle\sum_{k=1}^K\beta_k(Y_{t-1}-\xi_k)^+.
\label{eq:PoisNonLinear}
\end{eqnarray}
It follows that under the conditions of Proposition \ref{NonLinearStability} and Theorem \ref{NonLinearAsympNormal} that $\{(\lambda_t, Y_t), t\ge 1\}$ is a stationary and ergodic  process, and the estimates are strongly consistent and asymptotically normal. In practice the covariance matrix of the estimates can be obtained by recursively applying
\small
\begin{eqnarray*}
\frac{\partial\lambda_t}{\partial\theta}=\begin{pmatrix}
1 & \lambda_{t-1} & Y_{t-1} & (Y_{t-1}-\xi_1)^+ & \ldots & (Y_{t-1}-\xi_K)^+
\end{pmatrix}^T + \alpha\frac{\partial\lambda_{t-1}}{\partial\theta}.
\end{eqnarray*}
\normalsize
Another example of nonlinear dynamic models is the Poisson exponential autoregressive model proposed by \cite{Fokianos}, and it is given by
\begin{eqnarray}
\label{eq:PoisExpModel}
Y_t|\mathcal{F}_{t-1}\sim\mbox{Pois}(\lambda_t),~~\lambda_t=(\alpha_0+\alpha_1 \exp\{-\gamma \lambda_{t-1}^2\})\lambda_{t-1}+\beta Y_{t-1},
\end{eqnarray}
where $\alpha_0,\alpha_1,\beta, \gamma>0$ are parameters. We point out that if $\alpha_0+\alpha_1+\beta<1$, then model (\ref{eq:PoisExpModel}) belongs to the class of models (\ref{eq:expmodel}) and hence enjoys the stability properties stated in Propositions \ref{modelgmc} and \ref{discreteergodicity}. As for the inference of the model, we refer readers to \cite{Fokianos} for details.

\section{Numerical results}
The performance of the estimation procedure for the Poisson nonlinear dynamic model is illustrated in a simulation study. The MLE is obtained by optimizing the log-likelihood function (\ref{eq:loglikeexp}) using a Newton-Raphson method. Simulation results of the Poisson INGARCH can be found in \cite{Fokianos}. Other models including the negative binomial linear and nonlinear dynamic models and the exponential autoregressive model (\ref{eq:PoisExpModel}) will be applied to two real datasets, and tools for checking goodness of fit will be considered.

\subsection{Simulation for the nonlinear model}
As specified in (\ref{eq:PoisNonLinear}), a 1-knot nonlinear dynamic model is simulated according to
\begin{eqnarray*}
Y_t|\mathcal{F}_{t-1}\sim \mbox{Pois}(\lambda_t),~~\lambda_t=0.5+0.5\lambda_{t-1}+0.4Y_{t-1}-0.2(Y_{t-1}-5)^+
\end{eqnarray*}
with different sample sizes. Each sample size and parameter configuration is replicated $1000$ times. For each realization, the first $500$ simulated observations are discarded as burn-in in order to let the process reach its stationary regime. We first estimate the parameters assuming that the location of the knot is known, i.e., the true underlying model is (\ref{eq:NonLinear}) with only one knot at 5. The means and standard errors of the estimates from all 1000 runs are summarized in Table \ref{tab:simulation} and the histograms of the estimates are depicted in Figure \ref{fig:1_known_knot}. The performance of these estimates is reasonably good and consistent with the theory described in Theorem \ref{NonLinearAsympNormal}. As for estimating the parameters without knowing the location of the knots, the corresponding results of the MLE obtained by fitting a 1-knot model to all the 1000 replications are summarized in Table \ref{tab:1_unknown}. Here the locations of the knots are determined by sample quantiles. Not surprisingly, the performance of the maximum likelihood estimates of $\beta$ and $\beta_1$ is not as good as in the known knot case. However, the overall model performance, as reflected in the computation of the scoring rules (described in the next section), is competitive with the known knot case. For instance when $n=1000$, the means of ranked probability scores (RPS) for known and unknown knot cases are $1.0906$ and $1.0914$, respectively.
  \begin{table}
  \caption{\label{tab:simulation}Estimation results for 1-knot model with known knot location}
  \centering
  \resizebox{11cm}{!}{
\begin{tabular}{| c | c  c  c  c  c| }
\hline
           & $\delta$  & $\alpha$ & $\beta$  & $\beta_1$  & $n$ \\ \hline
True     & 0.5 & 0.5 & 0.4 & -0.2 &    \\ 
Estimates  & 0.5596 & 0.4861 & 0.3990  & -0.2009 & 500 \\
s.e.    & (0.0087) & (0.0030) & (0.0026) & (0.0051)  & \\ 
Estimates  & 0.5265 & 0.4944 & 0.3991  & -0.2016 & 1000  \\
s.e.    & (0.0041) & (0.0016) & (0.0013) & (0.0025)  &\\ \hline
\end{tabular}}
\end{table}

 \begin{table}
  \caption{\label{tab:1_unknown}Estimation for 1-knot model with unknown knot location}
  \centering
  \resizebox{11cm}{!}{
\begin{tabular}{| c | c  c  c  c  c | }
\hline
           & $\delta$  & $\alpha$ & $\beta$  & $\beta_1$  & $n$   \\ \hline
True     & 0.5 & 0.5 & 0.4 & -0.2 & \\
Estimates  & 0.5387 & 0.4852 & 0.4187  & -0.1614 & 500\\
s.e.    & (0.0089) & (0.0030) & (0.0031) & (0.0047)  & \\ 
Estimates  & 0.5002 & 0.4943 & 0.4197  & -0.1679 & 1000 \\
s.e.    & (0.0042) & (0.0016) & (0.0015) & (0.0023)  & \\ \hline
\end{tabular}}
\end{table}

\begin{figure}
\centering
\makebox{\includegraphics[scale=.6]{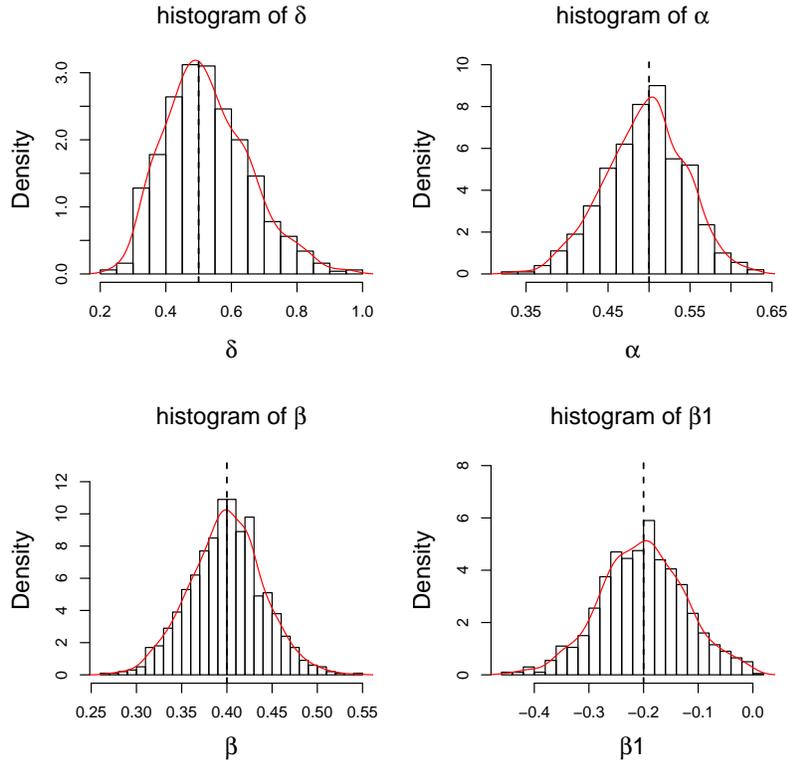}}
\caption{Histograms of the 1-knot model with sample size 1000 assuming the knot is known. The overlaying curves are the density estimates and the dashed vertical lines represent the true values of the parameters.}
\label{fig:1_known_knot}
\end{figure}

Next we turn to the problem of selecting the number of knots using an information criterion. Simulations with different sample sizes are implemented and the model selection results are summarized in Table \ref{tab:sim_1_unknown}. Numbers in the table stand for the proportion of times that each particular model is selected in the 1000 runs. For AIC, the 1-knot model is selected most often followed by a 2-knot model, at least in the cases when $n=1000$.
  \begin{table}
    \caption{\label{tab:sim_1_unknown}Model selection of 1-knot simulation}
  \centering
\resizebox{13.5cm}{!}{
\begin{tabular}{| c | c  c  c  c  c  c|}
\hline
Criteria                  &  0 knot &       1 knot     & 2 knots  &    3 knots  &     $\ge4$ knots & $n$\\ 
\hline
AIC               &  $34.3\%$ & $37.6\%$  &  $20.9\%$  & $5.2\%$  &  $2.0\%$ & 500\\ 
BIC              &  $80.5\%$  &  $18.8\%$   &  $0.6\%$  &  $0.1\%$  &  0 & \\ \hline
AIC               &  $12.4\%$ & $45.0\%$  &  $29.9\%$  & $8.3\%$  &  $4.4\%$  & 1000\\ 
BIC              & $59.4\%$  &  $38.4\%$   &  $2.0\%$  &  $0.2\%$   & 0 &  \\ \hline
\end{tabular}}
\end{table}
\normalsize

In light of the idea of interpolating the nonlinear dynamic of $\lambda_t$ by a piecewise linear function, we plot in Figure \ref{fig:curve_1_unknown} the fitted functions $\hat{\beta}y+\sum_{k=1}^K \hat{\beta}_k (y-\hat{\xi}_k)^+$ for each run of the simulations against its true form $0.4y-0.2(y-5)^+$. From the graph, we can see that the piecewise linear function fitted by the 1-knot model is closest to the true curve.
\begin{figure}
\centering
\makebox{\includegraphics[scale=.8]{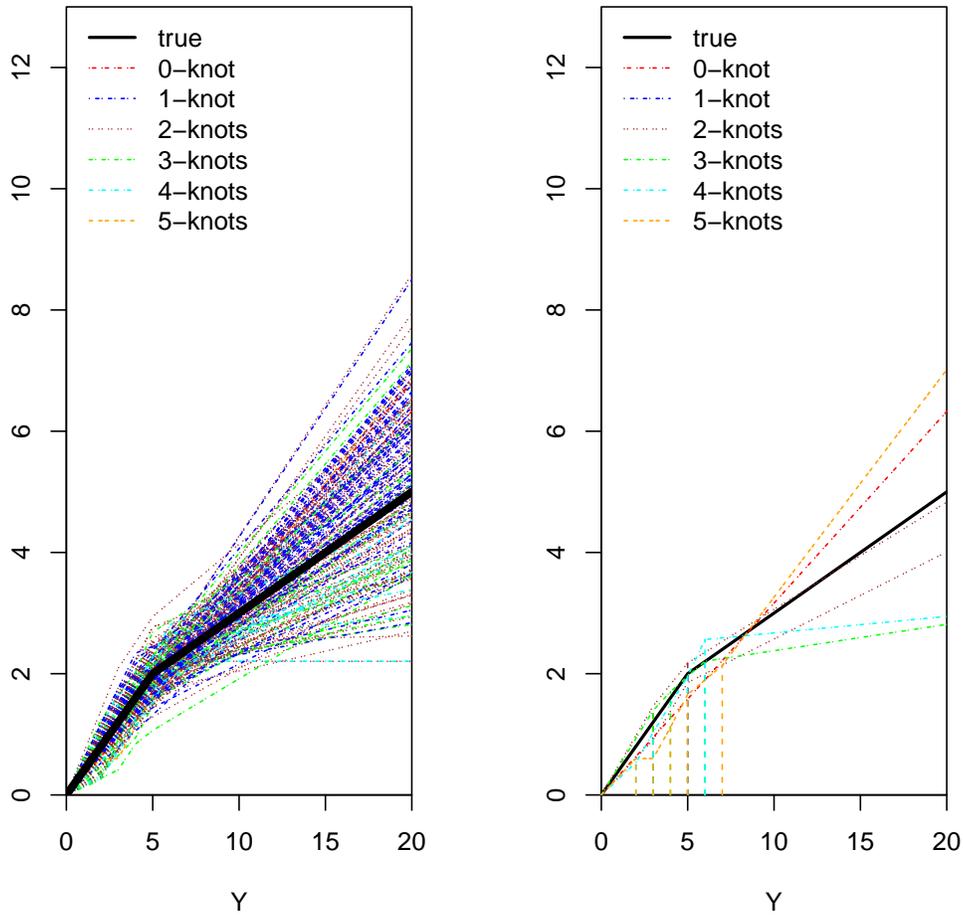}}
\caption{Left: the black curve is the true function $0.4y-0.2(y-5)^+$, and the other curves are the piecewise linear functions fitted in each simulation where the number of knots $K$ is selected via AIC; Right: for each value of $K$, we plot the fitted curve from one specific run that chooses the particular number of knots.}
\label{fig:curve_1_unknown}
\end{figure}

\subsection{Two data applications}

\subsubsection*{1. Number of transactions of Ericsson stock}

As an illustrative example, both linear and nonlinear dynamic models are employed to fit the number of transactions per minute for the stock Ericsson B during July 2nd, 2002 which consists of 460 observations. Figure \ref{fig:Ericsson_data} plots the data and the autocorrelation function. The positive dependence displayed in the data suggests the application of the models in our study.
\begin{figure}
\centering
\makebox{\includegraphics[scale = .48]{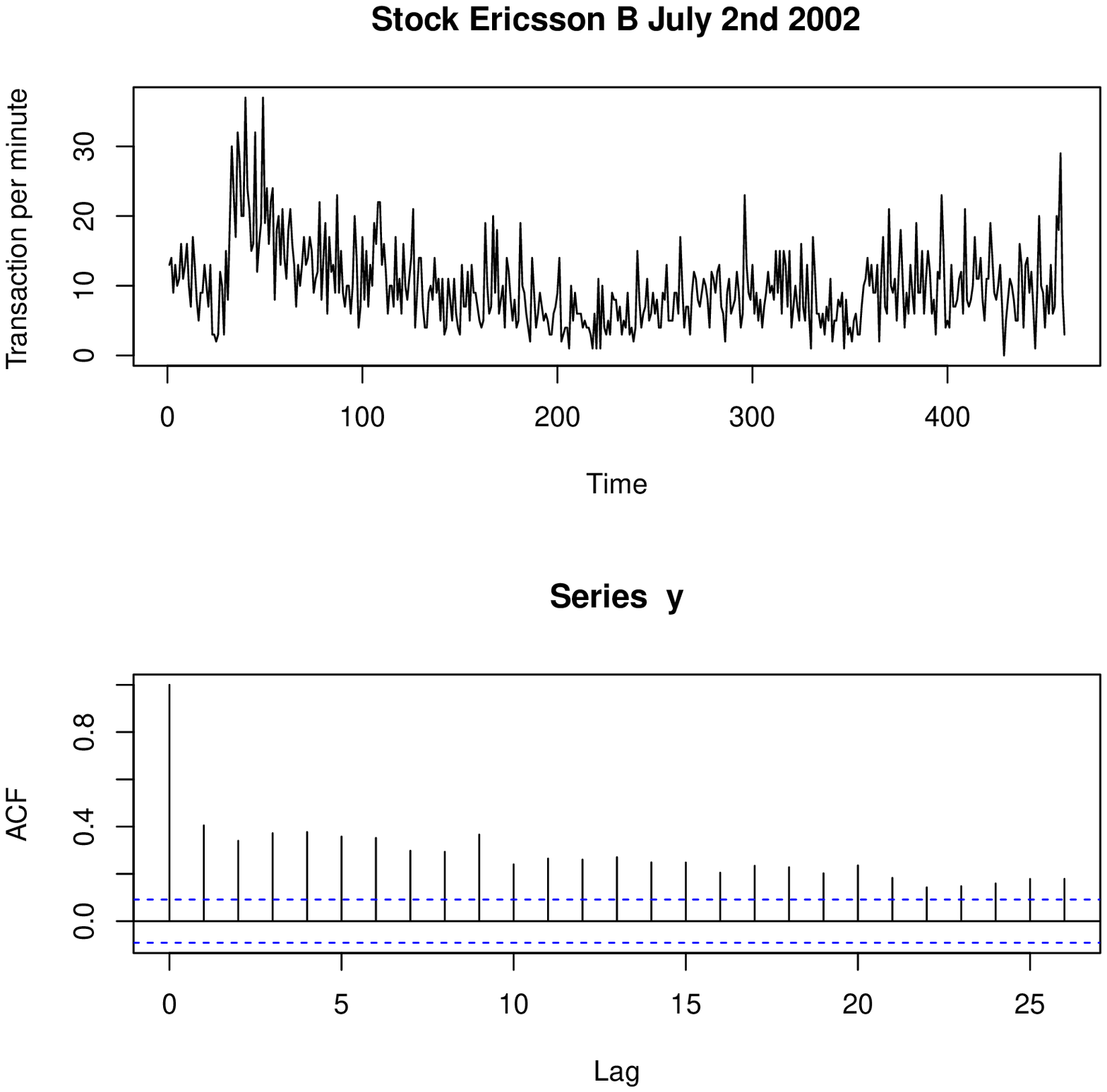}}
\caption{Top: Number of transactions per minute of the stock Ericsson B during July 2nd 2002; Bottom: ACF of the data.}
\label{fig:Ericsson_data}
\end{figure}

By computing the MLE of the parameters, the fitted Poisson INGARCH model is given by
\begin{eqnarray*}
\hat{\lambda}_t&=&0.2912+0.8312\hat{\lambda}_{t-1}+0.1395Y_{t-1},\\
&&(0.1000)~(0.0242)~~~~~~~(0.0188)
\end{eqnarray*}
and the fitted NB-INGARCH model is
\begin{eqnarray*}
Y_t|\mathcal{F}_{t-1}\sim \mbox{NB}(8, \hat{p}_t),~~\hat{X}_t&=& 0.2676+ 0.8447\hat{X}_{t-1}+0.1282Y_{t-1},\\
&&(0.1406)~(0.0350)~~~~~~~(0.0274)
\end{eqnarray*}
where $\hat{X}_t=8(1-\hat{p}_t)/\hat{p}_t$. The standard deviations in the parentheses are calculated according to the remark after Theorem \ref{AsympNormal}. 

As for the Poisson nonlinear dynamic model, AIC and BIC are used to help select the number of knots among 0 to 5; the values are reported in Table \ref{tab:infor_criteria}. 
  \begin{table}
    \caption{\label{tab:infor_criteria}Model selection results for Ericsson data}
  \centering
  \resizebox{13.5cm}{!}{
\begin{tabular}{| l |  c  c  c  c  c c |}
\hline
  & 0-knot & 1-knot & 2-knot & 3-knot & 4-knot & 5-knot \\ \hline 
LogL & -1433.19 &  -1431.21 & -1431.08& -1430.58& $\boldsymbol{-1429.65}$ & -1431.12 \\ 
AIC  & 2874.38& $\boldsymbol{2872.41}$ & 2874.17 & 2875.17 & 2875.30 & 2880.25 \\ 
BIC & $\boldsymbol{2890.90}$& 2893.07 & 2898.95& 2904.08 &  2908.35 & 2917.43  \\ \hline 
\end{tabular}}
\end{table}
The fitted 1-knot Poisson model, which has the smallest AIC, is given by
\begin{eqnarray*}
\hat{\lambda}_t&=&0.5837+0.8319\hat{\lambda}_{t-1}+0.0906Y_{t-1}+0.0722(Y_{t-1}-9)^+.\\
&&(0.1884)~(0.0241)~~~~~~~(0.0295)~~~~~~~(0.0373)
\end{eqnarray*} 
Note that the AIC values of the 2-knot and 3-knot models are both close to that of the 1-knot model, and therefore are used as a basis for comparison with the minimum AIC model. These models are given by $\hat{\lambda}_t=0.5519+0.8326\hat{\lambda}_{t-1}+0.0961Y_{t-1}+0.0154(Y_{t-1}-7)^++0.0559(Y_{t-1}-11)^+$ and $\hat{\lambda}_t=0.3614+0.8361\hat{\lambda}_{t-1}+0.1206Y_{t-1}+0.0433(Y_{t-1}-6)^+-0.0914(Y_{t-1}-9)^++0.0914(Y_{t-1}-13)^+$, respectively. 

As can be seen from the model checking below, the negative binomial INGARCH model seems to outperform the Poisson-based models. This could be explained by the over-dispersion exhibited by the data, since the mean and variance are 9.91 and 32.84, respectively. To this end, we fit the nonlinear negative binomial models and select the number of knots by minimizing the AIC. It turns out that the AIC value of a 1-knot model is the second smallest among all the candidates, with 2674.69 compared to the smallest value 2674.04, which is attained by the negative binomial INGARCH model fitted above. The fitted 1-knot negative binomial nonlinear model is given by $Y_t|\mathcal{F}_{t-1}\sim \mbox{NB}(8, \hat{p}_t)$, where $\hat{X}_t=8(1-\hat{p}_t)/\hat{p}_t$ follows
\begin{eqnarray*}
\hat{X}_t&=&0.4931+0.8444\hat{X}_{t-1}+0.0903Y_{t-1}+0.0603(Y_{t-1}-9)^+.\\
&&(0.2559)~(0.0350)~~~~~~~(0.0412)~~~~~~~(0.0546)
\end{eqnarray*}
Here the locations of knots for the nonlinear dynamic model are all estimated by the corresponding sample quantiles. We also tried estimating the knots by maximizing the likelihood, and in this application, the results by both methods are nearly identical. The exponential autoregressive model (\ref{eq:PoisExpModel}) is also applied to this dataset by \cite{Fokianos} and is given by
\begin{eqnarray*}
\hat{\lambda}_t&=&(0.8303+7.030\exp\{-0.1675\hat{\lambda}_{t-1}^2\})\hat{\lambda}_{t-1}+0.1551Y_{t-1}.\\
&&(0.0232)~(3.0732)~~~~~~(0.0592)~~~~~~~~~~~~~~~(0.0218)
\end{eqnarray*}

To assess the adequacy of the fit by all of the above models, we will consider an array of graphical and quantitative diagnostic tools for time series, some of which are specifically designed for time series of counts. Readers can refer to \cite{Davis03} and \cite{Jung11} for a comprehensive treatment of the tools. In our study, we first consider the standardized Pearson residuals $e_t=(Y_t-\E(Y_t|\mathcal{F}_{t-1}))/\sqrt{\var(Y_t|\mathcal{F}_{t-1})}$ which can be obtained by replacing the population quantities by their estimated counterparts. If the model is correctly specified, then the residuals $\{\hat{e}_t\}$ should be a white noise sequence with constant variance. It turns out that all the models considered above give very similar fitted conditional mean processes and the standardized Pearson residuals appear to be white. Figure \ref{fig:ericsson_fit} displays the fitted result for the 1-knot negative binomial model.
\begin{figure}
\centering
\makebox{\includegraphics[scale=.45]{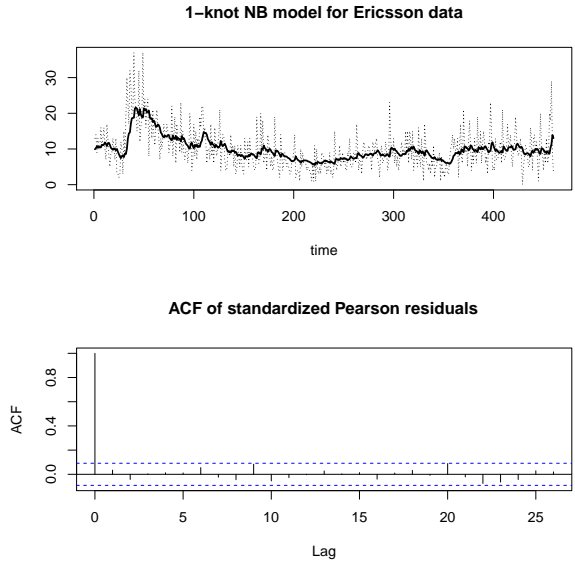}}
\caption{Top: Dotted curve represents the number of transactions of Ericsson stock, and the black curve is the fitted conditional mean process by 1-knot NB-based model; Bottom: ACF of the standardized Pearson residuals.}
\label{fig:ericsson_fit}
\end{figure}

Another tool for model checking is through the probability integral transform (PIT). When the underlying distribution is continuous, it is well known that the PIT follows standard uniform distribution. However, if the underlying distribution is discrete, some adjustments are required and the so-called randomized PIT is therefore introduced by perturbing the step function characteristic of the CDF of discrete random variables (see \cite{Brockwell06}). More recently, \cite{Czado} proposed a non-randomized version of PIT as an alternative adjustment. Since it usually gives the same conclusion for model checking, we do not provide the non-randomized version here. For any $t$, the randomized PIT is defined by 
\begin{eqnarray*}
\tilde{u}_t:=F_t(Y_t-1)+\nu_t \bigr[F_t(Y_t)-F_t(Y_t-1)\bigr],
\end{eqnarray*}
where $\{\nu_t\}$ is a sequence of iid uniform $(0,1)$ random variables, $F_t(\cdot)$ is the predictive cumulative distribution. In our situation, $F_t(\cdot)$ is simply the CDF of a Poisson or a negative binomial distribution. If the model is correct, then $\tilde{u}_t$ is an iid sequence of uniform $(0,1)$ random variables. \cite{Jung11} reviewed several ways to depict this and we adopt their method in our study. To test if the PIT follows $(0,1)$ uniform distribution, the histograms of PIT from different models are plotted and a Kolmogorov-Smirnov test is carried out. The results are summarized in Figure \ref{fig:Ericsson_PIT}, and the $p$-values are reported in Table \ref{tab:Ericsson_scores}. It can be seen that both of the two negative binomial-based models pass the PIT test, while none of the Poisson-based models does. This observation could be explained, as mentioned above, by the over-dispersion phenomenon of the data. 
\begin{figure}
\centering
\makebox{\includegraphics[scale=.5]{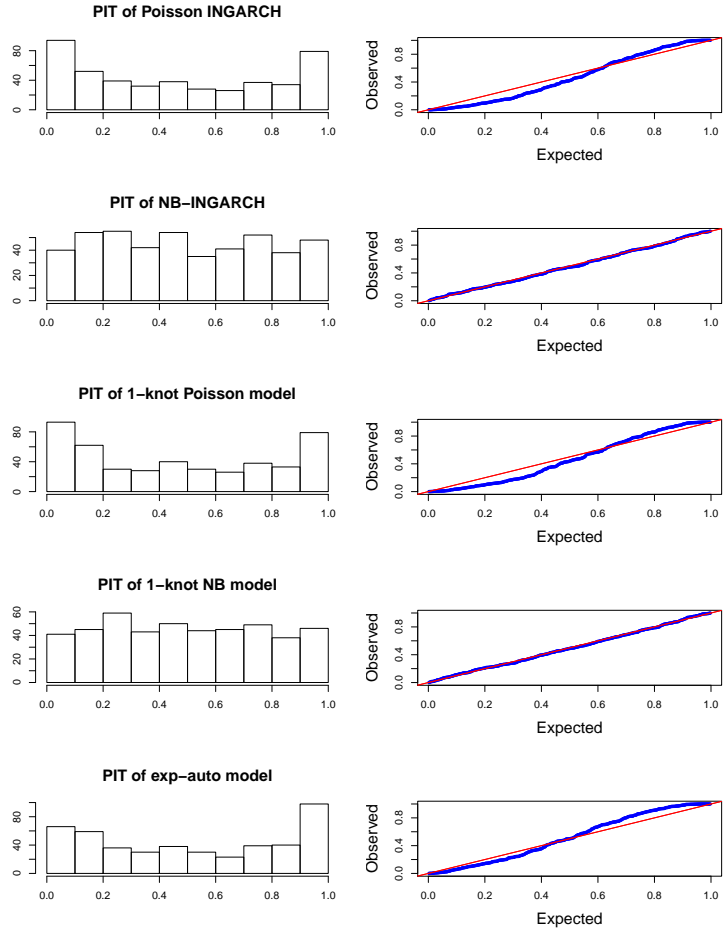}}
\caption{Left: histograms of randomized PIT's for all of the models fitted to the Ericsson stock data; Right: QQ-plots of $\tilde{u}_t$ against standard uniform distribution for the corresponding models, where the straight line is the $45^{\circ}$ line with zero intercept.}
\label{fig:Ericsson_PIT}
\end{figure}

To measure the power of predictions by models, various scoring rules have been proposed in literature, see e.g., \cite{Czado} and \cite{Jung11}. Most of them are computed as the average of quantities related to predictions and take the form $(n-1)^{-1}\sum_{t=2}^n s(F_t(Y_t))$ where $F_t(\cdot)$ is the CDF of the prediction distribution and $s(\cdot)$ denotes some scoring rule. In this paper we calculate three scoring rules: logarithmic score (LS), quadratic score (QS) and ranked probability score (RPS), as a basis for evaluating the relative performance of our fitted models. For definition of these scores, see \cite{Jung11}. Table \ref{tab:Ericsson_scores} summarizes these scores for all of the fitted models. As seen from the table, most of the diagnostic tools favor the one-knot negative binomial model for the Ericsson data.
  \begin{table}
    \caption{\label{tab:Ericsson_scores}Quantitative model checking for Ericsson data}
  \centering
\resizebox{13.5cm}{!}{
\begin{tabular}{| l  c c  c  c  c|}
\hline
Model & log likelihood &$p$-value of PIT & LS & QS & RPS  \\ \hline
Poisson INGARCH & -1433.19 & $<10^{-5}$ & 3.1167 & -0.0576 & 2.6883 \\
NB INGARCH & -1332.02 & 0.7386 & 2.8958 & -0.0671 & 2.6063 \\
1-knot Poisson model &  -1431.21 &  $<10^{-5}$& 3.1123 & -0.0573 & 2.6848 \\ 
2-knot Poisson model  & -1431.08 & $<10^{-5}$ & 3.1121 & -0.0575 & 2.6843 \\
3-knot Poisson model  & -1430.58 & $<10^{-5}$ & 3.1110 & -0.0580 & 2.6779 \\
1-knot NB model  & $\boldsymbol{-1331.34}$ &  0.8494 & $\boldsymbol{2.8942}$ & $\boldsymbol{-0.0671}$ & $\boldsymbol{2.6021}$ \\
Exp-auto model & -1448.69 &  $<10^{-5}$& 3.1504 & $-0.0600$ & 2.6924 \\ \hline
\end{tabular}}
\end{table}
\normalsize

\subsubsection*{2. Return times of extreme events of Goldman Sachs Group (GS) stock}
\medskip

As a second example, we construct a time series based on daily log-returns of Goldman Sachs Group (GS) stock from May 4th, 1999 to March 16th, 2012. We first calculate the hitting times, $\tau_1,\tau_2,\ldots$, for which the log-returns of GS stock falls outside the $0.05$ and $0.95$ quantiles of the data. The discrete time series of interest will be the return (or inter-arrival) times $Y_t=\tau_t-\tau_{t-1}$. If the data are in fact iid, or do not exhibit clustering of large values, then the $Y_t$'s should be independent and geometrically distributed with probability of success $p=0.1$ (\cite{ChangPhD}). Figure \ref{fig:gs_return_times} plots the return times of the stock, and the ACF and histogram of the return times. Note that in order to ameliorate the visual effect of some extremely large observations, the time series is also plotted in the top right panel of Figure \ref{fig:gs_return_times} on a reduced vertical scale, in which it is truncated at 80 and the five observations that are affected are depicted by solid triangles.
\begin{figure}
\centering
\makebox{\includegraphics[scale = .7]{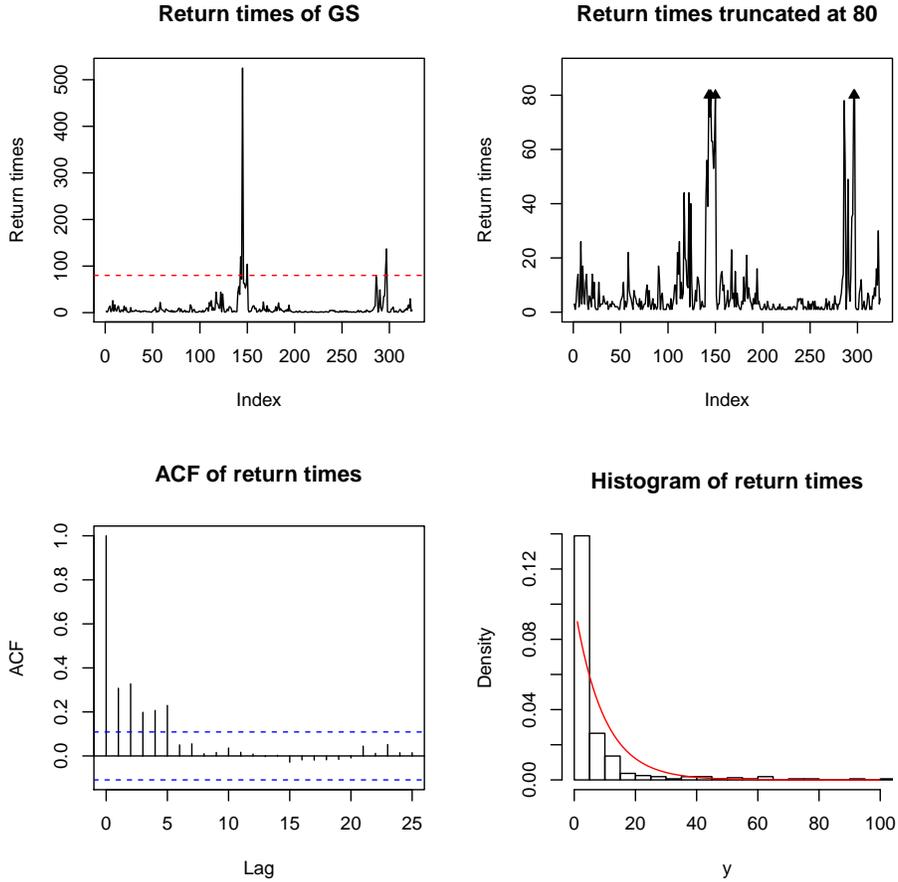}}
\caption{Top left: Return times of GS stock, the dashed horizontal line locates at 80; Top right: Return times truncated at 80 in order to ameliorate the visual effect of the five large observations that are represented by solid triangles; Bottom left: ACF of the return times; Bottom right: Histogram of the return times, where the curve overlaid is the density function of a geometric distribution with $p=0.1$.}
\label{fig:gs_return_times}
\end{figure}

To explore this time series, three models: the geometric INGARCH (negative binomial INGARCH (\ref{eq:nbingarch11}) with $r=1$), and the 1-knot and 2-knot geometric-based models are fitted to the data. The number of knots for the nonlinear dynamic models is chosen by minimizing the AIC, and the locations of knots are estimated by maximizing the likelihood based on a grid search. In addition, the following constraint is imposed: there should be at least 30 observations in each of the regimes segmented by the knots in order to guarantee that there are sufficient observations to obtain quality estimates of the parameters. The sample quantile method for estimating knot locations did not perform as well.

Since it follows from the definition of return times that $Y_t\ge 1$ for any $t$, we use a version of the geometric distribution that counts the total number of trials, instead of only the failures. In particular, the fitted 1-knot geometric-based model is given by $Y_t-1|\mathcal{F}_{t-1}\sim \mbox{Geom}(p_t)$, where
\begin{eqnarray*}
X_t=0.5042+0.4729X_{t-1} + 0.5271(Y_{t-1}-1)-0.0526(Y_{t-1}-5)^+,
\end{eqnarray*}
and the fitted 2-knot geometric-based model is  
\begin{eqnarray*}
X_t=0.5414+0.4531X_{t-1}+0.5469Y_{t-1}-0.2333(Y_{t-1}-9)^++0.2332(Y_{t-1}-18)^+,
\end{eqnarray*}
where $X_t=(1-p_t)/p_t$. Notice that in both models, $\hat{\alpha}+\hat{\beta}$ is very close to unity, i.e., the estimated parameters are close to the boundary of the parameter space. This is similar to the integrated GARCH (IGARCH) model in which $\alpha+\beta=1$. In our application, the mean of the time series of return times is about 10, while the variance is 1101. A simple simulation according to the fitted model yields the mean and median very close to those of the data, but the variance of the simulated data is extraordinarily large, which resembles the feature of the observed data. This is because, although the fitted models are still stationary, the parameters no longer satisfy the conditions specified in Theorem \ref{NonLinearAsympNormal} that ensure a finite variance.

It turns out that the geometric-based models fitted above are capable of capturing the high volatility part of the data. Their standardized Pearson residuals are also calculated and appear to be white. Results of the PIT test are depicted in Figure \ref{fig:exceedance_PIT}, and the prediction scores and the $p$-values of the PIT test are summarized in Table \ref{tab:exceedance_scores}. Two Poisson-based models are also included for comparison, and as expected, they do not perform as well as the geometric-based models.


\begin{figure}
\centering
\makebox{\includegraphics[scale=.5]{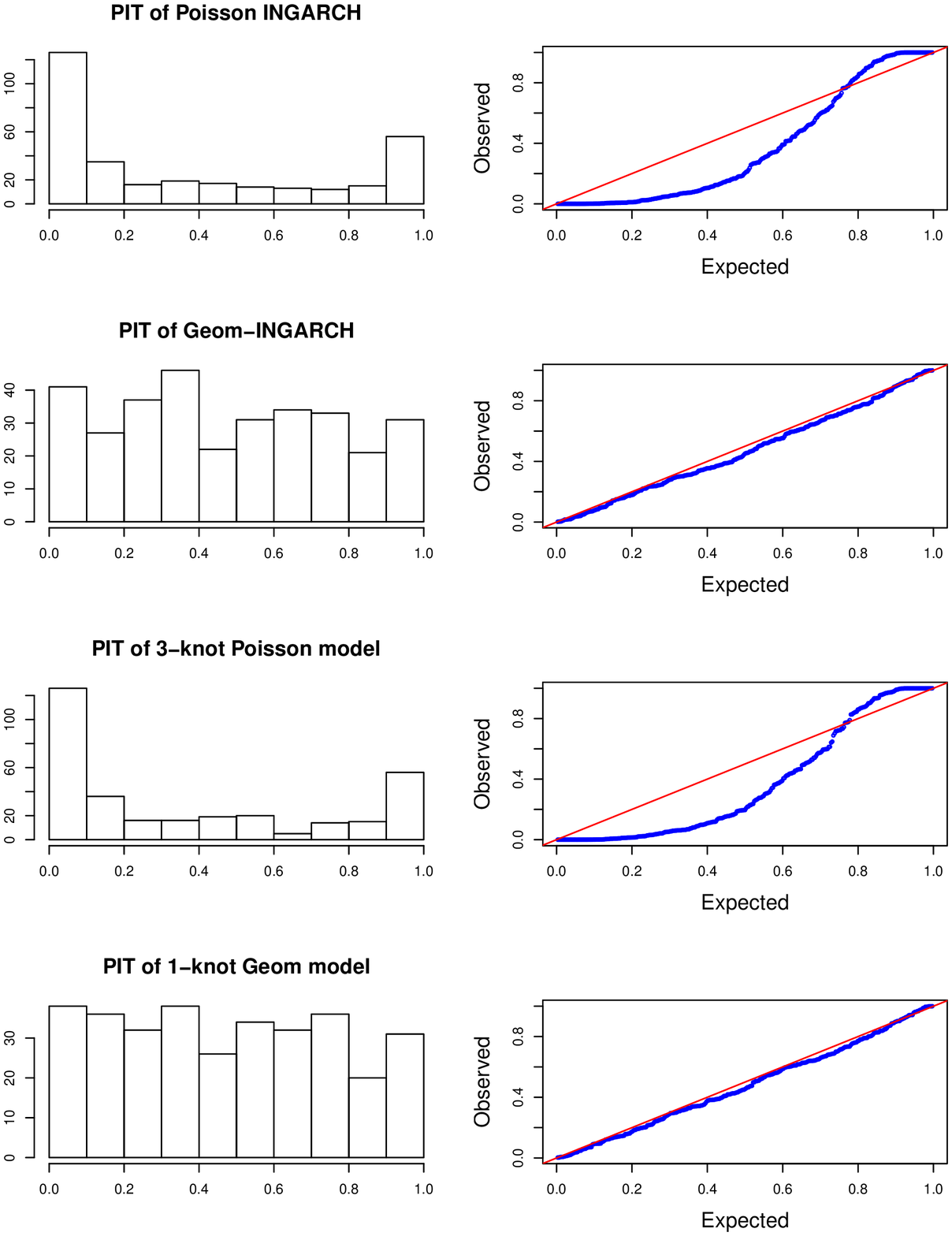}}
\caption{Left: histograms of randomized PIT's for the models fitted to GS return times; Right: QQ-plots of $\tilde{u}_t$ against standard uniform distribution for the corresponding models, where the straight line is the $45^{\circ}$ line with zero intercept.}
\label{fig:exceedance_PIT}
\end{figure}

\begin{table}
    \caption{\label{tab:exceedance_scores}Quantitative model checking for GS return times}
  \centering
\resizebox{13.5cm}{!}{
\begin{tabular}{| l  c c  c  c  c|}
\hline
Model & log likelihood &$p$-value of PIT & LS & QS & RPS  \\ \hline
Poisson INGARCH & -2681.06 & $<10^{-5}$ & 8.2842 & -0.0675 & 4.1373 \\
Geom INGARCH & -857.73 &  0.2581 & 2.6477 & -0.1436 & 3.4100 \\
3-knot Poisson model &  -2670.33 &  $<10^{-5}$ & 8.2510 & -0.0693 & 4.1400 \\ 
1-knot Geom model  & -857.58 & 0.3988 & 2.6472 & $\boldsymbol{-0.1436}$ & 3.4041 \\
2-knot Geom model  & $\boldsymbol{-857.42}$ &  0.2006 & $\boldsymbol{2.6468}$ & -0.1435 & $\boldsymbol{3.3939}$ \\ \hline
\end{tabular}}
\end{table}
\normalsize

\section*{Acknowledgement}
This research is supported in part by NSF grant DMS-1107031.

\section*{Appendix A. Properties of the exponential family}

An important property of the one-parameter exponential family that is heavily used in this paper is the stochastic monotonicity. A random variable $X$ is said to be stochastically smaller than a random variable $Y$ (written as $X\le_{ST}$ Y) if $F(x)\ge G(x)$ for all $x$, where $F(x)$ and $G(x)$ are the cumulative distribution functions of $X$ and $Y$ respectively. We refer readers to \cite{YamingYu} for the related theory.
\begin{prop}
\label{STexponential}
Suppose two random variables $Y'$ and $Y''$ follow distributions belonging to the one-parameter exponential family (\ref{eq:expfamily}) with the same $A, h$ and $\mu$, but with natural parameters $\eta'$ and $\eta''$ respectively. If $\eta'\le \eta''$, then $Y'$ is stochastically smaller than $Y''$.
\end{prop}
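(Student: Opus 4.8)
The plan is to reduce the claim to the monotone likelihood ratio (MLR) property of the exponential family and then give a short self-contained argument that MLR implies first-order stochastic dominance. First I would write down the likelihood ratio: wherever $h(y)>0$ (elsewhere both densities vanish),
\[
r(y):=\frac{p(y\mid\eta'')}{p(y\mid\eta')}=\exp\{(\eta''-\eta')y-A(\eta'')+A(\eta')\},
\]
and since $\eta''-\eta'\ge 0$ this is a nondecreasing function of $y$. That is the MLR property in the natural parameter.

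Next I would extract stochastic ordering from this. Integrating against $p(\cdot\mid\eta')$ gives $\int r(y)\,p(y\mid\eta')\,d\mu=\int p(y\mid\eta'')\,d\mu=1=\int p(y\mid\eta')\,d\mu$, hence $\int\bigl(r(y)-1\bigr)p(y\mid\eta')\,d\mu=0$. If $r\equiv 1$ on the support $\mu$-a.e., the two laws coincide and the statement is trivial; otherwise the monotonicity of $r$ together with the vanishing integral forces a crossing point $y^{\ast}\in(-\infty,\infty]$ with $r(y)\le 1$ for $y<y^{\ast}$ and $r(y)\ge 1$ for $y\ge y^{\ast}$.

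Finally I would conclude by comparing tails. With $F',F''$ the two CDFs, for every $x$,
\[
P(Y''>x)-P(Y'>x)=\int_{\{y>x\}}\bigl(r(y)-1\bigr)p(y\mid\eta')\,d\mu .
\]
If $x\ge y^{\ast}$ the integrand is nonnegative on $\{y>x\}$, so the difference is $\ge 0$; if $x<y^{\ast}$, the total-integral identity lets me rewrite the right-hand side as $\int_{\{y\le x\}}\bigl(1-r(y)\bigr)p(y\mid\eta')\,d\mu$, whose integrand is again nonnegative on $\{y\le x\}$. Either way $F''(x)\le F'(x)$ for all $x$, which is exactly $Y'\le_{ST}Y''$.

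There is no serious obstacle here: the only delicate point is the MLR $\Rightarrow$ stochastic dominance implication, and the potential pitfalls (the density ratio never strictly crossing $1$, or $\mu$ being a discrete measure so that $r$ has jumps) are all absorbed by phrasing the argument through the crossing point $y^{\ast}$ rather than through continuity of $r$. Alternatively, this implication may simply be cited from the stochastic-ordering literature (cf.\ \cite{YamingYu}), in which case only the first step above is needed.
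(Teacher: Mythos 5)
Your proof is correct, and it takes a genuinely different route from the paper's. The paper observes that $\log\{p(y|\eta')/p(y|\eta'')\}$ is linear, hence concave, in $y$, so that $Y'$ is log-concave relative to $Y''$ ($Y'\le_{lc}Y''$) in the sense of Definition 2 of \cite{YamingYu}; it then checks the boundary condition at $y=0$ (using that $A(\eta)$ is increasing, which in turn relies on the standing assumption that the support of $Y$ is non-negative so that $A'(\eta)=B(\eta)=\E Y>0$) and invokes Theorem 1 of \cite{YamingYu} to upgrade $\le_{lc}$ plus the boundary condition to $\le_{ST}$. You instead note that the likelihood ratio $r(y)=\exp\{(\eta''-\eta')y-A(\eta'')+A(\eta')\}$ is nondecreasing (the monotone likelihood ratio property in the natural parameter) and prove the implication MLR $\Rightarrow$ stochastic dominance from scratch via the single-crossing argument: $\int(r-1)\,p(\cdot|\eta')\,d\mu=0$ together with monotonicity of $r$ yields a crossing point $y^{\ast}$, and the two tail computations then give $F''\le F'$ everywhere. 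Your argument is self-contained, works for any $\sigma$-finite dominating measure $\mu$ (discrete or continuous, handled uniformly), and does not use the non-negativity of the support or the monotonicity of $A$ at all; what the paper's route buys is brevity by outsourcing the ordering implication to \cite{YamingYu}, at the cost of needing the boundary check at $0$ and of splitting into continuous and discrete cases. One could also note that in the non-degenerate case $\eta''>\eta'$ your crossing point is explicit, $y^{\ast}=\{A(\eta'')-A(\eta')\}/(\eta''-\eta')$, since $r$ is a continuous strictly increasing exponential in $y$; but your more careful phrasing through $y^{\ast}$ is harmless and covers the degenerate case as well.
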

\begin{proof}
Denote the probability density functions of $Y'$ and $Y''$ as $p(y|\eta')$ and $p(y|\eta'')$ defined in (\ref{eq:expfamily}), respectively. Then the log ratio of the two densities is
\begin{eqnarray*}
l(y)&=&\log\frac{p(y|\eta')}{p(y|\eta'')}=\log\frac{\exp\{\eta' y-A(\eta')\}h(y)}{\exp\{\eta'' y-A(\eta'')\}h(y)}\\
    &=&y(\eta'-\eta'')+[A(\eta'')-A(\eta')],
\end{eqnarray*}
which is apparently a concave function in $y$. So it follows from Definition 2 in \cite{YamingYu} that $Y'$ is log concave relative to $Y''$, i.e., $Y'\le_{lc} Y''$. Moreover, since $A(\eta)$ is increasing in $\eta$, so $\lim_{y\downarrow 0}l(y)=A(\eta'')-A(\eta')\ge 0$ for continuous $p(y|\eta)$, and $p(0|\eta')/p(0|\eta'')\ge1$ for discrete $p(y|\eta)$. Hence according to Theorem 1 in \cite{YamingYu}, $Y'$ is stochastically smaller than $Y''$, i.e., $Y'\le_{ST} Y''$. 
\end{proof}

Denote $F_x$ as the cumulative distribution function of $p(y|\eta)$ in (\ref{eq:expfamily}) with $x=B(\eta)$, and its inverse $F_x^{-1}(u):=\inf\{t\ge 0: F_x(t)\ge u\}$ for $u\in [0,1]$. The result below provides a useful tool for the coupling technique employed to establish mixing conditions for the observation process.
\begin{prop}
\label{SameTheta}
Suppose that $U$ is a uniform $(0, 1)$ random variable, and define two random variables $Y'$ and $Y''$ as
\begin{eqnarray*}
Y'=F_{x'}^{-1}(U)~~~\mbox{and}~~~ Y''=F_{x''}^{-1}(U),
\end{eqnarray*}
where $x'=B(\eta')$ and $x''=B(\eta'')$. Then $\E |Y'-Y''|=|x'-x''|$.
\end{prop}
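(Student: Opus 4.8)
The plan is to exploit the fact that $Y'$ and $Y''$ are coupled through the \emph{same} uniform variable $U$, which forces them to be comparable pathwise. Assume without loss of generality that $x'\le x''$; since $B$ is strictly increasing, this is equivalent to $\eta'\le\eta''$. The first step is to apply Proposition \ref{STexponential}: because $Y'$ and $Y''$ come from the one-parameter exponential family with the same $A,h,\mu$ and natural parameters $\eta'\le\eta''$, we have $Y'\le_{ST}Y''$, i.e. $F_{x'}(t)\ge F_{x''}(t)$ for every $t\ge 0$.

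The second step is the elementary observation that the generalized inverse reverses pointwise inequalities between distribution functions. If $F_{x'}(t)\ge F_{x''}(t)$ for all $t$, then for each $u\in(0,1)$ the set $\{t\ge 0:F_{x''}(t)\ge u\}$ is contained in $\{t\ge 0:F_{x'}(t)\ge u\}$, so taking infima gives $F_{x'}^{-1}(u)\le F_{x''}^{-1}(u)$. Evaluating at $u=U$ yields $Y'\le Y''$ almost surely, and hence $|Y'-Y''|=Y''-Y'$ almost surely.

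The final step is to take expectations. Since $F_{x'}^{-1}(U)$ has distribution function $F_{x'}$ and $F_{x''}^{-1}(U)$ has distribution function $F_{x''}$, and both laws have finite mean with $\E Y'=B(\eta')=x'$ and $\E Y''=B(\eta'')=x''$, we obtain $\E|Y'-Y''|=\E Y''-\E Y'=x''-x'=|x''-x'|$. The only points needing care are the monotonicity of $u\mapsto F_x^{-1}(u)$ with respect to $F_x$ and the integrability of $Y',Y''$, both of which are routine; the substantive content is entirely supplied by the stochastic-ordering statement of Proposition \ref{STexponential}, so I do not anticipate a real obstacle here.
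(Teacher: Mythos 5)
Your proposal is correct and follows essentially the same route as the paper's proof: invoke Proposition \ref{STexponential} to get the stochastic ordering, deduce the pointwise ordering of the quantile functions so that $Y'\le Y''$ almost surely, and then take expectations. You merely spell out the elementary monotonicity of the generalized inverse a bit more explicitly than the paper does.
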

\begin{proof}
It follows from the construction of $Y'$ and $Y''$ that they follow the one-parameter exponential family (\ref{eq:expfamily}) with natural parameters $\eta'$ and $\eta''$ respectively, and $\E Y'=x'$, $\E Y''=x''$. If $x'\le x''$, then $Y'$ is stochastically smaller than $Y''$ by virtue of Proposition \ref{STexponential}. It follows that $F_{x'}^{-1}(\theta)\le F_{x''}^{-1}(\theta)$ for $\theta\in (0,1)$, i.e., $Y'\le Y''$. This implies $\E|Y'-Y''|=\E(Y''-Y')=x''-x'$. Similarly if $x'\ge x''$, then $\E|Y'-Y''|=x'-x''$. Hence we have $\E|Y'-Y''|=|x'-x''|$.
\end{proof}

\section*{Appendix B. Proofs}
\subsection*{B.1. Proof of Proposition \ref{modelgmc}}

It suffices to verify the two conditions formulated in \cite{Weibiao04}. For any $y_0$ in the state space $S$, $\E|y_0-f_{u}(y_0)|=\int_0^1 |y_0-g(y_0, F^{-1}_{y_0}(u))|du\le y_0+g(0,0)+a y_0+b\int_0^1 F_{y_0}^{-1}(u)du\le g(0,0)+(1+a+b)y_0<\infty$. Next for a fixed $x_0\in S$, there exists a unique $\eta_0$ such that $x_0=B(\eta_0)$ due to the strict monotonicity of $B(\eta)$. For any $x\ge x_0$, there exists a unique $\eta\ge \eta_0$ such that $x=B(\eta)\ge B(\eta_0)=x_0$. Hence by the contraction condition (\ref{ContractionFunction}), we have
\begin{eqnarray}
\E|X_1(x)-X_1(x_0)|&=&\int_0^1\bigr|g\bigr(x, F_{x}^{-1}(u)\bigr)-g\bigr(x_0, F_{x_0}^{-1}(u)\bigr)\bigr|du \nonumber\\
                          &\le&a|x-x_0|+b\int_0^1\bigr|F_x^{-1}(u)-F_{x_0}^{-1}(u)\bigr|du. \label{eq:gmc2}
\end{eqnarray}
It follows from $x\ge x_0$ and Proposition \ref{STexponential} that for any $u\in (0,1)$, $F_{x_0}^{-1}(u)\le F_x^{-1}(u)$. Therefore
\begin{eqnarray*}
\E|X_1(x)-X_1(x_0)|&\le&a(x-x_0)+b\{\int_0^1 F_x^{-1}(u)du-\int_0^1 F_{x_0}^{-1}(u)du\}\\ 
                              &=&(a+b)(x-x_0).
\end{eqnarray*}
Similarly for $x<x_0$, we have $\E|X_1(x)-X_1(x_0)|\le(a+b)(x_0-x)$. So for any $x\in S$, we have $\E|X_1(x)-X_1(x_0)|\le(a+b)|x-x_0|$. Now suppose  $\E|X_n(x)-X_n(x_0)|\le(a+b)^n|x-x_0|$, then
\small
\begin{eqnarray*}
\E|X_{n+1}(x)-X_{n+1}(x_0)|&=&\E[\E\{|X_{n+1}(X_n(x))-X_{n+1}(X_n(x_0))|\bigr|U_1,\ldots, U_n\}]\\
                              &\le&\E\{(a+b)|X_n(x)-X_n(x_0)|\}\\
                              &\le&(a+b)^{n+1}|x-x_0|.
\end{eqnarray*}
\normalsize
By induction, $\{X_t\}$ is geometric moment contracting and as a result, $\pi$ is its unique stationary distribution. 

To show that $\E_{\pi}X_1<\infty$, notice that by taking conditional expectation on both sides of (\ref{eq:BoundOfG}), we have $\E(X_t|X_{t-1})\le g(0,0)+(a+b)X_{t-1}$. Inductively one can show that for any $t\ge 1$, 
\begin{eqnarray*}
\E(X_t|X_1)\le \frac{1-(a+b)^{t-1}}{1-(a+b)}g(0,0)+(a+b)^{t-1}X_1. 
\end{eqnarray*}
Since for any $x\in S$, $X_t(x)\stackrel{\mathcal{L}}{\longrightarrow}X_1\sim\pi$ as $t\rightarrow \infty$, in particular, $X_t(0)\stackrel{\mathcal{L}}{\longrightarrow}X_1\sim\pi$, so by Theorem 3.4 in \cite{Billingsley99} we have 
\begin{eqnarray*}
\E_{\pi}X_1\le \displaystyle\liminf_{t\rightarrow\infty}\E(X_t|X_1=0)\le \frac{g(0,0)}{1-(a+b)}<\infty.
\end{eqnarray*}
To prove (c), let $\{\xi_t, t\ge 1\}$ be a sequence of independent uniform $(0,1)$ random variables and independent of $\{X_t, t\ge 1\}$, then $Y_t=F_{X_t}^{-1}(\xi_t)$. Since $\{(X_t, \xi_t), t\ge 1\}$ is a stationary sequence if $X_1\sim \pi$, so $\{Y_t, t\ge 1\}$ must also be a stationary process.

\subsection*{B.2. Proof of Proposition \ref{discreteergodicity}}

Define a sequence of functions $\{g_k, k\ge 1\}$ in a way such that $g_1=g$, and for $k\ge 2$, $g_k(x, y_1,\ldots, y_k)=g_{k-1}(g(x, y_k), y_1,\ldots, y_{k-1})$. Then it follows from (\ref{eq:expmodel}) that for all $t\in\mathbb{Z}$, 
\begin{eqnarray*}
X_t=g_k(X_{t-k}, Y_{t-1}, \ldots, Y_{t-k}).
\end{eqnarray*}
By virtue of the contraction condition (\ref{ContractionFunction}), we have $\E\bigr|X_t-g_1(0, Y_{t-1})\bigr|=\E\bigr|g_1(X_{t-1}, Y_{t-1})-g_1(0, Y_{t-1})\bigr|\le a\E X_{t-1}$. By induction, it follows that for any $k\ge 1$,
\begin{eqnarray*}
\E \bigr|X_t-g_k(0, Y_{t-1},\ldots, Y_{t-k})\bigr|\le a^k~\E X_{t-k}.
\end{eqnarray*}
Since $\E_{\pi}X_1<\infty$, it follows that $g_k(0, Y_{t-1},\ldots, Y_{t-k})\stackrel{L^1}{\longrightarrow}X_t$, as $k\rightarrow \infty$. Hence there exists a measurable function $g_{\infty}:\mathbb{N}_0^{\infty}=\{(n_1, n_2, \ldots), n_i\in \mathbb{N}_0\}\longrightarrow [0,\infty)$ such that $X_t=g_{\infty}(Y_{t-1}, Y_{t-2},\ldots)$ almost surely, which proves (a).

To prove (b), denote $\mathcal{F}^{Y}_{k,l}=\sigma\{Y_k, \ldots, Y_l\}$ for $-\infty\le k\le l\le \infty$. Then the coefficients of absolute regularity of the stationary count process $\{Y_t, t\in \mathbb{Z}\}$ are defined as 
\begin{eqnarray*}
\beta(n)=\E\bigr\{\sup_{A\in \mathcal{F}^Y_{n,\infty}}\bigr|P(A|\mathcal{F}^Y_{-\infty, 0})-P(A)\bigr|\bigr\},
\end{eqnarray*}
where $\mathcal{F}^{Y}_{-\infty,0}=\sigma\{X_1, Y_0,Y_{-1}, \ldots\}$ according to $(a)$. Because the distribution of $(Y_n, Y_{n+1}, \ldots)$ given $\sigma\{X_1, Y_0, Y_{-1}, \ldots\}$ is the same as that of $(Y_n, Y_{n+1}, \ldots)$ given $X_1$ for $n\ge 1$, the coefficients of absolute regularity become
\begin{eqnarray}
\beta(n)&=&\E\bigr\{\sup_{A\in \mathcal{F}^{Y}_{n,\infty}}\bigr|P(A|\sigma\{X_1, Y_0,Y_{-1},\ldots\})-P(A)\bigr|\bigr\}\nonumber \\
           &=&\E\bigr\{\sup_{A\in \mathcal{F}^{Y}_{n,\infty}}\bigr|P(A|X_1)-P(A)\bigr|\bigr\}. \label{betacoef}
\end{eqnarray}
Let $\mathcal{B}^{\infty}$ be the $\sigma$-field in $\mathbb{R}^{\infty}$ generated by the cylinder sets, then we can rewrite the coefficients of absolute regularity as 
\begin{eqnarray}
\beta(n)=\E\Bigr\{\sup_{A\in\mathcal{B}^{\infty}}\bigr|P\bigr((Y_n, Y_{n+1},\ldots)\in A|X_1\bigr)-P\bigr((Y_n, Y_{n+1},\ldots)\in A\bigr)\bigr|\Bigr\}.
\label{beta_n}
\end{eqnarray}
We will provide an upper bound for (\ref{beta_n}) by coupling two chains $\{(X_n', Y_n'), n\in \mathbb{Z}\}$ and $\{(X_n'', Y_n''), n\in \mathbb{Z}\}$ defined on a common probability space. Assume that both chains start from the stationary distribution, that is, $X_1'\sim \pi$, $X_1''\sim \pi$ and that $X_1'$ is independent of $X_1''$. Let $\{U_k, k\in \mathbb{Z}\}$ as be an iid sequence of uniform $(0,1)$ random variables, and construct the chains as follows:
\begin{eqnarray*}
&&X_n'=g\bigr(X_{n-1}', F^{-1}_{X_{n-1}'}(U_{n-1})\bigr),~~~ Y_n'=F_{X_n'}^{-1}(U_n),\\
&&X_n''=g\bigr(X_{n-1}'', F^{-1}_{X_{n-1}''}(U_{n-1})\bigr),~~~ Y_n''=F_{X_n''}^{-1}(U_n).
\end{eqnarray*}
Since $X_1'$ and $X_1''$ are independent, so for any $A\in \mathcal{B}^{\infty}$,
\begin{eqnarray*}
P((Y_n'', Y_{n+1}'',\ldots)\in A|X_1')=P((Y_n, Y_{n+1},\ldots)\in A).
\end{eqnarray*}
Hence we have 
\begin{eqnarray}
&&\bigr|P\bigr((Y_n, Y_{n+1},\ldots)\in A|X_1=x\bigr)-P\bigr((Y_n, Y_{n+1},\ldots)\in A\bigr)\bigr| \nonumber \\
&=&\bigr|P\bigr((Y_n', Y_{n+1}',\ldots)\in A|X_1'=x\bigr)-P\bigr((Y_n'', Y_{n+1}'',\ldots)\in A|X_1'=x\bigr)\bigr| \nonumber \\ 
&\le& P\bigr((Y_n',Y_{n+1}',\ldots)\neq (Y_n'', Y_{n+1}'',\ldots)|X_1'=x\bigr). \label{eq:differenceP}
\end{eqnarray}
Therefore the coefficients of absolute regularity are bounded by
\begin{eqnarray}
\beta(n)\le P\bigr((Y_n',Y_{n+1}',\ldots)\neq (Y_n'', Y_{n+1}'',\ldots)\bigr)\le \displaystyle\sum_{k=0}^{\infty}P(Y_{n+k}'\neq Y_{n+k}''). \label{eq:finalbeta}
\end{eqnarray}
Observe that the construction of the two chains agrees with the definition of geometric moment contraction (Definition 1 in \cite{Weibiao04}), so it follows from Proposition \ref{modelgmc} that $\E|X_n'-X_n''|\le (a+b)^n$ for all $n$. Then
\begin{eqnarray*}
P(Y_n'\neq Y_n'')&=&\E\{P(Y_n'\neq Y_n''|X_n,X_n'')\}= \E\{P(|Y_n'-Y_n''|\ge 1|X_n, X_n'')\} \\
                       &\le&\E\{E|Y_n'-Y_n''|\bigr|X_n',X_n'')\}=\E|X_n'-X_n''|\le (a+b)^n.
\end{eqnarray*}
Hence according to (\ref{eq:finalbeta}), the coefficients of absolute regularity satisfy $\beta(n)\le \sum_{k=0}^{\infty}(a+b)^{n+k}=(a+b)^n/(1-(a+b))$. Recall the well-known fact that $\beta$-mixing implies strong mixing (e.g., \cite{Doukhan94}), so $\{Y_t, t\ge 1\}$ is stationary and strongly mixing at geometric rate, in fact, it is ergodic. In particular, $\{Y_t, t\ge 1\}$ is an ergodic stationary process. It follows from $X_t=g_{\infty}(Y_{t-1}, Y_{t-2},\ldots)$ that $\{X_t, t\ge 1\}$ is also ergodic.

\subsection*{B.3. Proof of Proposition \ref{ContinuousErgodicity}}
The proof utilizes the classic Markov chain theory, see for example \cite{MeynTweedie}. (a) follows from the same argument as in the proof of Proposition \ref{discreteergodicity}. As for (b), for any fixed $\epsilon>0$, define $\phi$ as Lebesgue measure on $[x^{\ast}, \infty)$, where $x^{\ast}=(g(0,0)+b\epsilon)/(1-a)$, and let $A$ be a set with $\phi(A)>0$. To prove the $\phi-$irreducible, we need to show that for any $x_1\in S$, there exists $n\ge 1$, such that $P^n(x_1, A)>0$. If $x_1<x^{\ast}$, then $g(x_1, \epsilon)<g(0,0)+ax_1+b\epsilon\le x^{\ast}$, which implies that $\phi\bigr(A\cap[g(x_1, \epsilon),\infty)\bigr)>0$. Because of the assumptions on the function $g$, and the fact that the distribution of $Y_1$ given $X_1=x_1$ has positive probability everywhere, so $P(x_1, A)>0$. On the other hand, if $x_1\ge x^{\ast}$, it is easy to see that $g(x_1, \epsilon/2)\le g(x_1, \epsilon)\le x_1$. If $g(x_1, \epsilon/2)<x^{\ast}$, then by the same argument above, we have $P(x_1, A)>0$. However, if $g(x_1, \epsilon/2)\ge x^{\ast}$, then $ag(x_1,\epsilon/2)+b\epsilon\le g(x_1,\epsilon/2)-g(0,0)\le ax_1+b\epsilon/2$. Hence we have $x^{\ast}\le g(x_1, \epsilon/2)\le x_1-(b\epsilon)/(2a)$. By induction, there exists $n\ge 1$ such that $g(x_n, \epsilon/2)\le x_1-n(b\epsilon)/(2a)<x^{\ast}$, where $x_t=g(x_{t-1},\epsilon/2)$ for $t=1,\ldots,n$. Since $\epsilon>0$, and the function $g$ is increasing in both coordinates, so $P^{n+1}(x_1, A)>0$. Hence $\{X_t, t\ge 1\}$ is $\phi-$irreducible.
 
We now show that $\{X_t, t\ge 1\}$ is aperiodic, i.e., a $\phi-$irreducible Markov chain is said to be aperiodic if there exists a small set $A$ with $\phi(A)>0$ such that for any $x\in A$, $P(x, A)>0$ and $P^2(x, A)>0$. Note that in the setting of the proposition, any compact set is a small set. So we take $A=[x^{\ast}, K]$ for some positive $K$ large enough. For any $x_1\in A$, from the proof of $\phi-$irreducibility, it is easy to see that $P(x_1, A)>0$. Similarly we have $P^2(x, A)=P(X_2\in A|X_0=x)\ge P(X_2\in A|X_1\in A)P(X_1\in A|X_0=x)>0$.
 
To check the drift condition, let $V(x)=1+x$. There exists $\delta>0$, such that $a+b<1-\delta$. For $x\ge (g(0,0)+\delta)/(1-a-b-\delta)$, we have 
\begin{eqnarray*}
\E \{V(X_1)|X_0=x\}&=&\E (1+X_1|X_0=x)=1+\E\{g(x, Y_0)|X_0=x\}\\
                            &\le& 1+g(0,0)+(a+b)x\le(1-\delta)(1+x)=(1-\delta)V(x).
\end{eqnarray*}
Hence the drift condition holds by taking the small set $A=[x_0^{\ast}, \{g(0,0)+\delta\}/(1-a-b-\delta)]$, which establishes the geometric ergodicity of $\{X_t\}$. It is well known that a geometrically ergodic Markov chain starting from its stationary distribution is strongly mixing with geometrically decaying rate, hence is an ergodic stationary time series (e.g., \cite{MeynTweedie}). Denote $\{\xi_t, t\ge 1\}$ as a sequence of iid uniform $(0,1)$ random variables, then it follows from $Y_t=F_{X_t}^{-1}(\xi_t)$ that $\{Y_t, t\ge 1\}$ is stationary and ergodic.

\subsection*{B.4. Proof of Theorem \ref{Consistency}}
We first show the identifiability and then establish the consistency result using Lemma \ref{WaldConsistency}. Throughout the proof, we assume that the process $\{(Y_t, X_t), t\in \mathbb{Z}\}$ is in its stationary regime. Note that by assumption (A1), $X_t(\theta)\ge x_{\theta}^{\ast}\in\mathcal{R}(B)$, which implies $\eta_t(\theta)\ge B^{-1}(x_{\theta}^{\ast})$. So it follows from assumptions (A2) and (A4) that for any $\theta\in \Theta$,
\begin{eqnarray*}
\E l_t(\theta)&=&\E\bigr\{Y_t B^{-1}(X_t(\theta))-A\bigr(B^{-1}(X_t(\theta))\bigr)\bigr\}\\
                  &\le&\E\bigr\{Y_t\displaystyle\sup_{\theta\in \Theta}B^{-1}(X_t(\theta))\bigr\}-A((B^{-1}(x^{\ast}_{\theta}))<\infty.
\end{eqnarray*}
This implies $\E l_t^{+}(\theta)<\infty$. Denote $M_n(\theta)=\sum_{t=1}^n l_t(\theta)/n$, then $M_n(\theta)\stackrel{a.s.}{\longrightarrow}M(\theta)=\E\bigr\{Y_1\eta_1(\theta)-A(\eta_1(\theta))\bigr\}$ according to the extended mean ergodic theorem (see \cite{billingsley95} pp. 284 and 495). In order to prove the identifiability, we need to show that $\theta_0$ is the unique maximizer of $M(\theta)$, that is, for any $\theta\in \Theta\setminus\{\theta_0\}$, $M(\theta)-M(\theta_0)<0$. First it follows from assumption (A5) that for any $\theta\neq \theta_0$ and all $t$, $P_{\theta_0}(G_t(\theta,\theta_0))>0$, where $G_t(\theta,\theta_0)=\{X_t(\theta)\neq X_t(\theta_0)\}$. Let $G=G_t(\theta,\theta_0)$, then we have 
\begin{eqnarray*}
M(\theta)-M(\theta_0)&=&\E\bigr[Y_t\bigr\{B^{-1}(X_t(\theta))-B^{-1}\bigr(X_t(\theta_0)\bigr)\bigr\}\\
                                   &&-\bigr\{A(B^{-1}(X_t(\theta)))-A(B^{-1}(X_t(\theta_0)))\bigr\}\bigr]\\
                                   &=&\E\bigr[X_t(\theta_0)\bigr\{B^{-1}(X_t(\theta))-B^{-1}\bigr(X_t(\theta_0)\bigr)\bigr\}\\
                                   &&-\bigr\{A(B^{-1}(X_t(\theta)))-A(B^{-1}(X_t(\theta_0)))\bigr\}\bigr]\\
                                  &=&\int_GX_t(\theta_0)\bigr\{B^{-1}(X_t(\theta))-B^{-1}\bigr(X_t(\theta_0)\bigr)\bigr\}\\
                                   &&-\bigr\{A(B^{-1}(X_t(\theta)))-A(B^{-1}(X_t(\theta_0)))\bigr\}dP_{\theta_0}.            
\end{eqnarray*}
On the set $G$, there exists $c\in \mathbb{R}$ between $B^{-1}\bigr(X_t(\theta)\bigr)$ and $B^{-1}\bigr(X_t(\theta_0)\bigr)$ such that $A(B^{-1}(X_t(\theta)))-A(B^{-1}(X_t(\theta_0)))=B(c)\{B^{-1}(X_t(\theta))-B^{-1}(X_t(\theta_0))\}$ by the mean value theorem. It follows from $A''(\eta)>0$ that $A(\eta)$ is strictly convex and $c$ must be strictly between $B^{-1}(X_t(\theta))$ and $B^{-1}(X_t(\theta_0))$. So there exists $\xi\in\mathbb{R}$ lying strictly between $X_t(\theta)$ and $X_t(\theta_0)$ such that $\xi=B(c)$. Therefore
\begin{eqnarray*}
M(\theta)-M(\theta_0)=\int_G (X_t(\theta_0)-\xi)\{B^{-1}(X_t(\theta))-B^{-1}(X_t(\theta_0))\}dP_{\theta_0}.
\end{eqnarray*}
Since $B(\eta)$ is strictly increasing, so $(X_t(\theta_0)-\xi)\{B^{-1}(X_t(\theta))-B^{-1}(X_t(\theta_0))\}<0$ in either of the two cases: $X_t(\theta)<X_t(\theta_0)$ and $X_t(\theta)>X_t(\theta_0)$. Hence $M(\theta)-M(\theta_0)<0$, for any $\theta\neq \theta_0$, which establishes the identifiability.
To show the consistency, first note that by assumption (A4), we have
\begin{eqnarray*}
\E\displaystyle\sup_{\theta\in \Theta}l_t(\theta)&=&\E\{Y_t \sup_{\theta\in \Theta}B^{-1}(X_t(\theta))-\inf_{\theta\in\Theta}A(B^{-1}(X_t(\theta)))\}\\
                                                          &\le&\E\{Y_t\displaystyle\sup_{\theta\in \Theta}B^{-1}(X_t(\theta))\}-A(B^{-1}(x^{\ast}))<\infty.
\end{eqnarray*}
The function $f_{\theta}$ in Lemma \ref{WaldConsistency} can be defined as 
\begin{eqnarray*}
f_{\theta}(\yy)=y_1B^{-1}(g_{\infty}^{\theta}(y_0,y_{-1},\ldots))-A(B^{-1}(g_{\infty}^{\theta}(y_0, y_{-1},\ldots))),
\end{eqnarray*}
where $\yy=(y_1, y_0, y_{-1},\ldots)$. Hence it follows from assumption (A2) and Lemma \ref{WaldConsistency} that $M(\theta)$ is upper-semicontinuous and for any compact subset $K\subset \Theta$, $\limsup_{n\rightarrow\infty}\sup_{\theta\in K}M_n(\theta)\le \sup_{\theta\in K}M(\theta)$. Take $\mathcal{U}_0$ as a local base of $\theta_0$ and let $U\in\mathcal{U}_0$ be a neighborhood of $\theta_0$, then Lemma \ref{WaldConsistency} can be applied to $\Theta\setminus U$. Because u.s.c function attains its maximum on compact sets and $M(\theta)<M(\theta_0)$ for any $\theta\neq \theta_0$, we have
\begin{eqnarray}
\displaystyle\limsup_{n\rightarrow\infty}\sup_{\theta\in \Theta\setminus U}M_n(\theta)\le \sup_{\theta\in\Theta\setminus U}M(\theta)<M(\theta_0),~~~P_{\theta_0}\mbox{-a.s.} \label{usc1}
\end{eqnarray}
Notice that for any $\tilde{\theta}\notin U$, $M_n(\tilde{\theta})\le \sup_{\theta\in\Theta\setminus U}M_n(\theta)$. Let $\omega\in\Omega$ such that (\ref{usc1}) holds and $M(\theta_0)=\lim_{n\rightarrow\infty}M_n(\theta_0)$. For such $\omega$, suppose $\hat{\theta}_n\notin U$ infinitely often, say, along a sequence denoted by $\widetilde{\mathbb{N}}$, then
\begin{eqnarray}
\displaystyle\liminf_{n\rightarrow\infty} M_n(\hat{\theta}_n)&\le& \liminf_{n\rightarrow\infty, n\in\widetilde{\mathbb{N}}}M_n(\hat{\theta}_n)\le \limsup_{n\rightarrow\infty, n\in\widetilde{\mathbb{N}}}M_n(\hat{\theta}_n) \nonumber\\
                                                                               &\le& \limsup_{n\rightarrow\infty, n\in\widetilde{\mathbb{N}}}\sup_{\theta\notin U}M_n(\theta)\le \limsup_{n\rightarrow\infty}\sup_{\theta\notin U}M_n(\theta). \label{usc2}
\end{eqnarray}
However, according to (\ref{usc1}), we have
\begin{eqnarray*}
\displaystyle\limsup_{n\rightarrow\infty}\sup_{\theta\in\Theta\setminus U}M_n(\theta)\le \sup_{\theta\in\Theta\setminus U}M(\theta)<M(\theta_0)=\lim_{n\rightarrow\infty} M_n(\theta_0)\le \liminf_{n\rightarrow\infty}M_n(\hat{\theta}_n),
\end{eqnarray*}
which contradicts (\ref{usc2}). Hence there exists a null-set $N_U$ such that for all $\omega\notin N_U$, $\hat{\theta}_n\in U$ for all $n$ large enough. It follows by taking any set $U\in \mathcal{U}_0$ that $\hat{\theta}_n$ converges to $\theta_0$ almost surely.

\subsection*{B.5. Proof of Theorem \ref{AsympNormal}}
We define a linearized form of $\eta_t(\theta)$ as $\eta_t^\dagger(\theta):=\eta_t(\theta_0)+(\theta-\theta_0)^T\dot{\eta}_t$, and the corresponding linearized log-likelihood function of $l(\theta)$ as
\begin{eqnarray*}
l^{\dagger}(\theta):=\displaystyle\sum_{t=1}^n \eta_t^{\dagger}(\theta)Y_t-\sum_{t=1}^n A(\eta_t^{\dagger}(\theta)).
\end{eqnarray*}
Let $u=\sqrt{n}(\theta-\theta_0)$, then define
\small
\begin{eqnarray}
R_n^{\dagger}(u)&=& l^{\dagger}(\theta_0)-l^{\dagger}(\theta_0+u n^{-1/2}) \nonumber \\
                         &=&\displaystyle\sum_{t=1}^n Y_t \eta_t-\sum_{t=1}^n A(\eta_t)-\sum_{t=1}^n (\eta_t+u^T n^{-1/2}\dot{\eta_t})Y_t+\sum_{t=1}^n A(\eta_t+u^Tn^{-1/2}\dot\eta_t)  \nonumber \\
                           &=&-u^T n^{-1/2} \sum_{t=1}^n Y_t \dot{\eta_t}+\sum_{t=1}^n \{A(\eta_t+u^Tn^{-1/2}\dot{\eta_t})-A(\eta_t)\} \nonumber \\
                           &=&-u^T n^{-1/2} \sum_{t=1}^n \{Y_t-B(\eta_t)\}\dot{\eta_t} \nonumber \\
&&+\sum_{t=1}^n \{A(\eta_t+u^Tn^{-1/2}\dot{\eta_t})-A(\eta_t)-u^Tn^{-1/2}B(\eta_t)\dot{\eta_t}\}. \label{eq:LinearR}
\end{eqnarray}
\normalsize
Let $s_t=n^{-1/2}\{Y_t-B(\eta_t)\}\dot{\eta_t}$, then $\E(s_t|\mathcal{F}_{t-1})=n^{-1/2}\E[\{Y_t-B(\eta_t)\}\dot{\eta_t}|\mathcal{F}_{t-1}]=0$, so $\{s_t,t \ge 1\}$ is a martingale difference sequence. Note that 
\begin{eqnarray*}
\displaystyle\sum_{t=1}^n \E(s_ts_t^T|\mathcal{F}_{t-1})&=&\frac{1}{n}\sum_{t=1}^n \E[\{Y_t-B(\eta_t)\}^2 \dot{\eta_t}\dot{\eta_t}^T|\mathcal{F}_{t-1}]\\
&=&\frac{1}{n}\sum_{t=1}^n B'(\eta_t)\dot{\eta_t}\dot{\eta_t}^T,
\end{eqnarray*}
which converges almost surely to $\Omega$ by the mean ergodic theorem and assumption (A7). Moreover, for any $\epsilon>0$, 
\begin{eqnarray*}
&&\sumn \E\{s_t s_t^T \mathbf{1}_{[|s_t|\ge \epsilon]}|\mathcal{F}_{t-1}\}\\
                     &=&1/n\sumn \dot{\eta_t} \dot{\eta_t}^T \E[\{Y_t-B(\eta_t)\}^2\mathbf{1}_{[|\{Y_t-B(\eta_t)\}\dot{\eta_t}|\ge \epsilon\sqrt{n}]}|\mathcal{F}_{t-1}]\\
                     &\le&1/n\sumn \dot{\eta_t} \dot{\eta_t}^T \E[\{Y_t-B(\eta_t)\}^2\mathbf{1}_{[|\{Y_t-B(\eta_t)\}\dot{\eta_t}|\ge M]}|\mathcal{F}_{t-1}]\\
                     &\longrightarrow& \E[\{Y_1-B(\eta_1)\}^2\dot{\eta_1}\dot{\eta_1}^T\mathbf{1}_{[|\{Y_t-B(\eta_t)\}\dot{\eta_t}|\ge M]}]~~\mbox{as}~n\rightarrow \infty\\
                     &\longrightarrow&0~~ \mbox{as}~M\rightarrow 0.
\end{eqnarray*}
\normalsize
Then it follows from the central limit theorem for martingale difference sequences that
\begin{eqnarray*}
\displaystyle\sum_{t=1}^n s_t\stackrel{\mathcal{L}}{\longrightarrow} V\sim N(0, \Omega),~~~\mbox{as}~~~n\rightarrow\infty,
\end{eqnarray*} 
where $\Omega$ is evaluated at $\theta_0$. The other term in (\ref{eq:LinearR}) by Taylor expansion is 
\begin{eqnarray*}
\frac{1}{2n}\sumn u^T \{B'(\eta_t)\dot{\eta_t}\dot{\eta_t}^T\} u+\mathcal{O}_p(n^{-3/2}\sumn B''(\eta_t)(u^T\dot{\eta_t})^3),
\end{eqnarray*}
which is of the order of $u^T\Omega u/2+o_P(1)$. Hence $R_n^{\dagger}(u)\indist -u^T V+\frac{1}{2}u^T \Omega u$, where $V\sim N(0, \Omega)$. It then follows that $\argmin_u R_n^{\dagger}(u) \indist \argmin_u\{-u^T V+\frac{1}{2}u^T \Omega u\}=\Omega^{-1}V\sim N(0, \Omega^{-1})$.

For the rest of the proof, we show that the difference between $R_n(u):=l(\theta_0)-l(\theta_0+un^{-1/2})$ and $R_n^{\dagger}(u)$ is negligible as $n$ grows large. By writing $\theta=\theta_0+un^{-1/2}$, the difference becomes
\begin{eqnarray}
R_n^{\dagger}(u)-R_n(u)&=& \sumn \{Y_t-B(\eta_t)\}\{\eta_t(\theta)-\eta_t-u^Tn^{-1/2}\dot{\eta_t}\} \nonumber \\
                                       &&-\sumn [A(\eta_t(\theta))-A(\eta_t+u^Tn^{-1/2}\dot{\eta_t}) \nonumber\\
                                       &&-B(\eta_t)\{\eta_t(\theta)-\eta_t-u^Tn^{-1/2}\dot{\eta_t}\}].  \label{eq:DifferenceR}
\end{eqnarray}
By Taylor expansion, the first term in (\ref{eq:DifferenceR}) is $1/(2n)\sum_{t=1}^n \{Y_t-B(\eta_t)\}u^T \ddot{\eta_t}(\theta_t^{\ast})u=1/(2n)u^T\newline[\sum_{t=1}^n \{Y_t-B(\eta_t)\}\ddot{\eta_t}+\sum_{t=1}^n \{Y_t-B(\eta_t)\}\{\ddot{\eta_t}(\theta_t^{\ast})-\ddot{\eta_t}\}]u$, where $\theta_t^{\ast}$ lies between $\theta$ and $\theta_0$, and $\ddot{\eta_t}=\partial^2\eta_t/\partial \theta\partial\theta^T$. Since 
\begin{eqnarray*}
\frac{1}{n}\sum_{t=1}^n \{Y_t-B(\eta_t)\}\ddot{\eta_t} &\asconv& \E[\{Y_t-B(\eta_t)\}\ddot{\eta_t}]\\
&=&\E[\ddot{\eta_t}\E\{Y_t-B(\eta_t)|\mathcal{F}_{t-1}\}]=0,
\end{eqnarray*} 
and $1/n\sum_{t=1}^n \{Y_t-B(\eta_t)\}\{\ddot{\eta_t}(\theta_t^{\ast})-\ddot{\eta_t}\} \asconv 0$ under the smoothness assumption, so the first term in (\ref{eq:DifferenceR}) converges to 0 uniformly on $[-K, K]$ for any $K>0$. We now apply Taylor expansion to each component in the second term of (\ref{eq:DifferenceR}),
\begin{eqnarray*}
&&A(\eta_t(\theta))=A(\eta_t)+u^Tn^{-1/2}B(\eta_t)\dot{\eta}_t\\
&&~~~~~~~~~~~~~~~+\frac{1}{2n}u^T\{B(\eta_t(\theta_1^{\ast}))\ddot{\eta}_t(\theta_1^{\ast})+B'(\theta_1^{\ast})\dot{\eta}_t(\theta_1^{\ast})\dot{\eta}_t(\theta_1^{\ast})^T\}u,\\
&&A(\eta_t+u^Tn^{-1/2}\dot{\eta_t})=A(\eta_t)+B(\eta_t)u^Tn^{-1/2}\dot{\eta_t}+\frac{1}{2n}u^TB'(c)\dot{\eta_t}\dot{\eta_t}^Tu,\\
&&\eta_t(\theta)=\eta_t(\theta_0+un^{-1/2})=\eta_t+\dot{\eta_t}u^Tn^{-1/2}+\frac{1}{2n}u^T\ddot{\eta_t}(\theta_2^{\ast})u,
\end{eqnarray*}
where $0\le c\le u^Tn^{-1/2}\dot{\eta_t}$, $\theta_1^{\ast}$ and $\theta_2^{\ast}$ both lie between $\theta_0$ and $\theta$. Therefore the second term in (\ref{eq:DifferenceR}) becomes
\small
\begin{eqnarray*}
&&\sumn [A(\eta_t(\theta))-A(\eta_t+u^Tn^{-1/2}\dot{\eta_t})-B(\eta_t)\{\eta_t(\theta)-\eta_t-u^Tn^{-1/2}\dot{\eta_t}\}]\\
&=&\sumn [A(\eta_t)+u^Tn^{-1/2}B(\eta_t)\dot{\eta_t}+\frac{1}{2n}u^T\{B(\eta_t(\theta_1^{\ast}))\ddot{\eta}_t(\theta_1^{\ast})+B'(\theta_1^{\ast})\dot{\eta}_t(\theta_1^{\ast})\dot{\eta}_t(\theta_1^{\ast})^T\}u\\
&&-A(\eta_t)-B(\eta_t)u^Tn^{-1/2}\dot{\eta_t}-\frac{1}{2n}u^TB'(c)\dot{\eta_t}\dot{\eta_t}^Tu-B(\eta_t)\frac{1}{2n}u^T\ddot{\eta_t}(\theta_2^{\ast})u]\\
&=&\frac{1}{2n}u^T\sumn[\{B(\eta_t(\theta_1^{\ast}))\ddot{\eta_t}(\theta_1^{\ast})-B(\eta_t)\ddot{\eta_t}(\theta_2^{\ast})\}+\{B'(\theta_1^{\ast})\dot{\eta_t}(\theta_1^{\ast})\dot{\eta_t}(\theta_1^{\ast})^T\\
&&-B'(c)\dot{\eta_t}\dot{\eta_t}^T\}]u,
\end{eqnarray*}
\normalsize
which converges to 0 on a compact set of $u$ under smoothness assumptions. So (\ref{eq:DifferenceR}) converges to 0 as $n\rightarrow\infty$, which implies that $\argmin_u R_n(u)$ and $\argmin_u R_n^{\dagger}(u)$ have the same asymptotic distribution, i.e.,
\begin{eqnarray*}
\displaystyle\argmin_u R_n(u)\stackrel{\mathcal{L}}{\longrightarrow}\Omega^{-1}V\sim N(0, \Omega^{-1}).
\end{eqnarray*}
Note that $\argmin_u R_n(u)=\argmax_u~l(\theta_0+un^{-1/2})=\sqrt{n}(\hat{\theta}_n-\theta_0)$, where $\hat{\theta}_n$ is the conditional maximum likelihood estimator. Hence
\begin{eqnarray*}
\sqrt{n}(\hat{\theta}_n-\theta_0) \indist N(0, \Omega^{-1}),~~~\mbox{as}~~n\rightarrow\infty.
\end{eqnarray*}

\subsection*{B.6. Proof of Theorem \ref{LinearAsymp}}
According to Theorems \ref{Consistency} and \ref{AsympNormal}, it is sufficient to establish the identifiability of the model, that is, we need to verify assumption (A5). Suppose for some $t\in\mathbb{Z}$, $X_t(\theta)=X_t(\theta_0)$, $P_{\theta_0}$-a.s, then $\delta+\alpha X_{t-1}(\theta)+\beta Y_{t-1}=\delta_0+\alpha_0 X_{t-1}(\theta_0)+\beta_0 Y_{t-1}$. It follows from (\ref{eq:InfinitePastRep}) that
\small
\begin{eqnarray*}
(\beta-\beta_0)Y_{t-1}=\delta_0-\delta+\alpha_0\bigr(\frac{\delta_0}{1-\alpha_0}+\beta_0\displaystyle\sum_{k=0}^{\infty}\alpha_0^k Y_{t-k-2}\bigr)-\alpha\bigr(\frac{\delta}{1-\alpha}+\beta\displaystyle\sum_{k=0}^{\infty}\alpha^k Y_{t-k-2}\bigr).
\end{eqnarray*}
\normalsize
If $\beta\neq \beta_0$, then $Y_{t-1}\in \mbox{span}\{Y_{t-2}, Y_{t-3},\ldots\}$ which contradicts the fact that $\var(Y_{t-1}|\mathcal{F}_{t-2})>0$. So $\beta$ must be the same as $\beta_0$. Similarly one can show that $\alpha=\alpha_0$ and $\delta=\delta_0$, which implies $\theta=\theta_0$. Hence the model is identifiable.

\subsection*{B.7. Proof of Remark \ref{L2Remark}}
The most difficult case is the derivative with respect to $\theta_2=\alpha$ and we only give its proof, since the arguments for $\delta$ and $\beta$ are similar. First note that 
\begin{eqnarray*}
\E\{B'(\eta_1(\theta_0))\bigr(\frac{\partial \eta_1(\theta_0)}{\partial \alpha}\bigr)^2\}=\E\{\frac{1}{B'(\eta_1)}\bigr(\frac{\partial B(\eta_1)}{\partial \alpha}\bigr)^2\}\le\frac{1}{\underline{c}}\E\{\frac{\partial B(\eta_1)}{\partial\alpha}\}^2,
\end{eqnarray*}
where $\partial B(\eta_1)/\partial \alpha=\delta/(1-\alpha)^2+\beta\sum_{k=1}^{\infty}k \alpha^{k-1}Y_{-k}$. Then on account of stationarity, one can show that
\begin{eqnarray*}
\E\bigr(\sum_{k=1}^{\infty}k \alpha^{k-1}Y_{-k}\bigr)^2&\le& \{\gamma_Y(0)+\frac{2\gamma_Y(1)}{1-\alpha(\alpha+\beta)}\}\displaystyle\sum_{k=1}^{\infty}k^2 \alpha^{2k-2}\\
&&+\frac{2\alpha\gamma_Y(1)}{1-\alpha^2(\alpha+\beta)^2}\sum_{k=1}^{\infty}k \alpha^{2k-2}+\mu^2\bigr(\sum_{k=1}^{\infty}k \alpha^{k-1}\bigr)^2<\infty,
\end{eqnarray*}
where $\mu=\E Y_t<\infty$. Hence $\E[B'(\eta_1(\theta_0))\{\partial \eta_1(\theta_0)/\partial \alpha\}^2]<\infty$ if $\gamma_Y(0)<\infty$.

\subsection*{B.8. Proof of Proposition \ref{poissonpq}}
The proof considers two separate cases: $q=1$ and $q>1$, since they require different methods to construct the state space. 
\begin{enumerate}
\item $q=1$: without loss of generality we consider $p=2$. Denote $\XX_t=(\lambda_t,\lambda_{t+1})$, then $\XX_t$ is a Markov chain. Note that $\lambda_t\ge \lambda^{\ast}=\delta/(1-\alpha_1-\alpha_2)$. $\XX_t$ can be constructed by iteratively imposing the random function $f_u$, $u\in (0,1)$,
\begin{eqnarray*}
f_u: [\lambda^{\ast},\infty)\times[\lambda^{\ast},\infty) &\longrightarrow& [\lambda^{\ast},\infty)\times[\lambda^{\ast},\infty)\\
                                                             \xx=(\lambda_1,\lambda_2) &\longmapsto& (\lambda_2,\delta+\alpha_1\lambda_2+\alpha_2\lambda_1+\beta F_{\lambda_2}^{-1}(u)).
\end{eqnarray*}
For any $\xx=(x_1,x_2), \yy=(y_1,y_2)$ in the state space $S=[\lambda^{\ast},\infty)\times[\lambda^{\ast},\infty)$, define metric $\rho$ as $\rho(\xx, \yy)=w_1|x_1-y_1|+w_2|x_2-y_2|$, where $w_i >0, i=1,2$ and $w_1, w_2$ are to be decided. Let $\xx_1=(\lambda_1^0,\lambda_2^0):=(\lambda^{\ast},\lambda^{\ast})$, then for any $\xx=(\lambda_1,\lambda_2)$ we have
\begin{eqnarray}
\E\rho(\XX_1(\xx),\XX_1(\xx_1))&=&\int_0^1 \rho(f_u(\xx),f_u(\xx_1))du \nonumber\\
                                     &=&a_2w_2|\lambda_1-\lambda_1^0|+\{w_1+w_2(a_1+b)\}|\lambda_2-\lambda_2^0|, \nonumber
\end{eqnarray}
where the last equation holds because $\lambda_t\ge \lambda^{\ast}$. Therefore it is sufficient to find an $r\in (0,1)$ and strictly positive $(w_1, w_2)$ such that
\begin{eqnarray*}
\E\rho(\XX_1(\xx),\XX_1(\xx_1))\le r\rho(\xx,\xx_1)=r\{w_1|\lambda_1-\lambda_1^0|+w_2|\lambda_2-\lambda_2^0|\}.
\end{eqnarray*}
This can be obtained if the equation $r^2-(a_1+b)r-a_2=0$ yields a root $r_{+}=\frac{a_1+b+\sqrt{(a_1+b)^2+4a_2}}{2}<1$. It can be shown that under $\alpha_1+\alpha_2+\beta<1$ the root $r_{+}\in (0, 1)$. Note that the choice of $(w_1, w_2)$ is not unique.\\
\item $q>1$: without loss of generality we consider the INGARCH(2,2) model. Define a Markov chain $\XX_t=(Y_t,\lambda_t,\lambda_{t+1})$, then the chain can be obtained by defining the iterated random functions $f_u: \mathbb{Z}_0\times[\lambda^{\ast},\infty)\times[\lambda^{\ast},\infty)\rightarrow \mathbb{Z}_0\times[\lambda^{\ast},\infty)\times[\lambda^{\ast},\infty)$ as $f(\xx)=f(n,\lambda_1,\lambda_2)=(F_{\lambda_2}^{-1}(u),\lambda_2, \delta+\alpha_1\lambda_2+\alpha_2\lambda_1+\beta_1F_{\lambda_2}^{-1}(u)+\beta_2n)$, 
where $\lambda^{\ast}=\delta/(1-\alpha_1-\alpha_2)$ and $u\in (0,1)$. Note that we cannot define $\XX_t$ in the same way as in the first case, since otherwise it contradicts the independence assumption of $\{u_t\}$ sequence. Define the metric $\rho$ on $S=\mathbb{Z}_0\times[\lambda^{\ast},\infty)\times[\lambda^{\ast},\infty)$ as $\rho(\xx,\yy)=\sum_{i=1}^3 w_i|x_i-y_i|$, where $\xx=(x_i)_{i=1}^3, \yy=(y_i)_{i=1}^3$ and $w_i>0, i=1,2,3$. Take $\xx_1=(n_0,\lambda_1^0,\lambda_2^0):=(0,\lambda^{\ast},\lambda^{\ast})$, then for any $\xx=(n,\lambda_1,\lambda_2)$, we have
\begin{eqnarray*}
\E\rho(\XX_1(\xx),\XX_1(\xx_1))&=&\int_0^1|f_u(\xx)-f_u(\xx_1)|du\\
                                     &=&\beta_2 w_3|n-n^0|+w_3\alpha_2|\lambda_1-\lambda_1^0|\\
                                     &&+\{w_1+w_2+(\alpha_1+\beta_1)w_3\}|\lambda_2-\lambda_2^0|.
\end{eqnarray*}
Similarly to the first case, one needs to solve the inequality
\begin{eqnarray*}
(\alpha_2+\beta_2)(w_1+w_2)&\le& [r-(\alpha_1+\beta_1)](\alpha_2+\beta_2)w_3\\
&\le& r(w_1+w_2)[r-(\alpha_1+\beta_1)]
\end{eqnarray*}
for an $r\in (0,1)$ and a strictly positive triple $(w_1, w_2, w_3)$. This can be achieved if $\alpha_1+\alpha_2+\beta_1+\beta_2<1$, which implies the quadratic equation $r^2-(\alpha_1+\beta_1)r-(\alpha_2+\beta_2)=0$ has a root $r_+\in (0,1)$. The result hence follows by a simple induction.
\end{enumerate}

\subsection*{B.9. Proof of Theorem \ref{NonLinearAsympNormal}}
According to Theorem \ref{AsympNormal}, we only need to establish the identifiability of the model. Similar to the proof of Theorem \ref{LinearAsymp}, one can demonstrate that if $X_t(\theta)=X_t(\theta_0), P_{\theta_0}$-a.s. for some $t$, where $\theta_0=(\delta_0, \alpha_0,\beta_0,\beta_{1,0}, \ldots, \beta_{K,0})$, then 
\begin{eqnarray*}
&&(\beta-\beta_0)Y_{t-1} + \displaystyle\sum_{k=1}^K (\beta_k-\beta_{k,0})(Y_{t-1}-\xi_k)^+\\
&=&\delta_0-\delta+\alpha_0 X_{t-1}(\theta_0)-\alpha X_{t-1}(\theta)\in \sigma\{Y_{t-2}, Y_{t-3},\ldots\}.
\end{eqnarray*}
It follows that $\beta=\beta_0$ and $\beta=\beta_{k,0}, k=1,\ldots, K$. Similarly one can show that $\delta=\delta_0$ and $\alpha=\alpha_0$, hence $\theta=\theta_0$ which verifies the identifiability of the model.

\bibliographystyle{rss}
\bibliography{heng}
\end{document}